\documentclass[final,onefignum,onetabnum]{siamart250211}

\usepackage[utf8]{inputenc}
\usepackage[T1]{fontenc}

\usepackage{amsfonts}
\usepackage{amssymb}
\usepackage{mathrsfs}
\usepackage[version=4]{mhchem}
\usepackage{stmaryrd}
\SetSymbolFont{stmry}{bold}{U}{stmry}{m}{n}
\usepackage[final]{microtype}
\usepackage{xurl}
\usepackage{algorithm}
\usepackage{algpseudocode}
\usepackage{enumitem}
\usepackage{graphicx}
\usepackage{subcaption}
\usepackage{tikz}
\usetikzlibrary{fit,calc,patterns,angles,quotes,arrows.meta,positioning,decorations.pathreplacing}
\usepackage{tikz-cd}
\DeclareMathOperator{\grad}{grad}
\newcommand{\Hess}{\operatorname{Hess}}
\newcommand{\tr}{\operatorname{tr}}

\newcommand{\diag}{\operatorname{diag}}

\DeclareMathOperator{\Exp}{Exp}

\usepackage{xcolor}
\newif\ifarxivversion
\arxivversiontrue

\newsiamremark{remark}{Remark}
\newsiamthm{assumption}{Assumption}
\setlist[enumerate]{leftmargin=*}

\headers{Nystr\"om Approximation on Manifolds}{
}
\title{Nystr\"om Approximation on Manifolds}
\author{
Hantao Nie\thanks{School of Mathematical Sciences, Peking University, Beijing, China. \texttt{nht@pku.edu.cn}} 
\and Bin Gao\thanks{State Key Laboratory of Mathematical Sciences, Academy of Mathematics and Systems Science, Chinese Academy of Sciences, Beijing, China. \texttt{gaobin@lsec.cc.ac.cn}}
\and Andi Han\thanks{School of Mathematics and Statistics, University of Sydney, Sydney, Australia. \texttt{andi.han@sydney.edu.au}}
\and Pratik Jawanpuria\thanks{Centre for Machine Intelligence and Data Science, IIT Bombay, Mumbai, India. \texttt{pratik.jawanpuria@iitb.ac.in}}
\and Bamdev Mishra\thanks{Microsoft, India. \texttt{bamdevm@microsoft.com}}
\and Zaiwen Wen\thanks{Beijing International Center for Mathematical Research, Peking University, Beijing, China. \texttt{wenzw@pku.edu.cn}}
}
\date{\today}
\begin{document}

\maketitle
\raggedbottom

\begin{abstract}
    Computations on a manifold often involve constructing an operator on the tangent space and computing its inverse, which can be time-consuming in many applications. In order to reduce the computational costs and preserve the benign properties of tangent operators, we develop the Riemannian Nystr\"om approximation on manifolds, a low-rank approximation of tangent operators through subspace projections onto the tangent space. 
    The developed approximation is intrinsically constructed and inherits desirable properties from the classical Nystr\"om approximation, e.g., positive semidefiniteness and approximation errors. Instead of the Gaussian sketching, we introduce the Haar--Grassmann sketching condition with a coordinate-free representation, which remains compatible under isometric vector transport across tangent spaces. Moreover, we propose a randomized Newton-type method for optimization on manifolds in which the linear system is constructed via the Riemannian Nystr\"om approximation. Numerical experiments on the SPD and Grassmann manifolds, together with principal geodesic analysis on real data, illustrate that the proposed approximation reduces the computational cost of operators while maintaining comparable accuracy.
  \end{abstract}

\section{Introduction}

Many large-scale problems in machine learning~\cite{bronstein2021geometric} and signal processing~\cite{zhu2018image} are naturally posed with manifold constraints, making geometry-aware algorithms increasingly important.
While the manifold structure enables faithful modeling of constraints and geometry, the associated linear algebra on the tangent space often becomes a computational bottleneck in high-dimensional scenarios, especially within iterative methods; e.g.,~\cite{AbsilBakerGallivan2007}. Specifically, given a Riemannian manifold $(\mathcal{M},g)$, we consider a tangent-space operator
\[
\mathcal{H}_x : \mathrm{T}_x\mathcal M \to \mathrm{T}_x\mathcal M, \quad x \in \mathcal{M},
\]
which is self-adjoint and positive semidefinite (PSD) with respect to the metric $g$.

One representative example involving tangent-space operators is solving linear systems on manifolds, where $\mathcal{H}_x$ encodes second-order information from objective functions, e.g., the Riemannian Hessian operator~\cite{absil2008optimization, boumal2023introduction}. 
Another example adopting tangent-space operators is principal geodesic analysis (PGA)~\cite{fletcher2004principal}, which extracts the dominant spectral structure from covariance-type objects on curved geometries, and $\mathcal{H}_x$ is formulated from a large covariance tensor. For instance, data in medical imaging and computer vision are frequently modeled as points on symmetric positive definite (SPD) manifolds~\cite{Pennec2006}, rendering $\mathcal{H}_x$ an operator on the tangent space of SPD manifolds. These applications call for computing a tangent-space operator and its pseudoinverse, but the explicit formulation of operators is often prohibitive or intractable, underscoring the need for efficient approximation of the operators.

A variety of memory- and computation-efficient techniques for problems involving tangent-space operators have been proposed in the literature. Specifically, spectral truncation on manifolds~\cite{ValletLevy2008_ManifoldHarmonics} approximates an operator by restricting it to a space spanned by the leading eigenvectors, reducing operator inversion or calculus to computations on the truncated coefficients.
For linear systems induced by manifold elliptic operators (e.g., discretized Laplace--Beltrami equations), multigrid and multilevel solvers~\cite{BonitoPasciak2012} achieve fast convergence by combining smoothing with coarse-grid correction across a hierarchy of discretizations, yielding a scalable approximate inverse. 
In optimization, a common alternative to Newton-type methods on manifolds is a Riemannian quasi-Newton method (e.g., Riemannian BFGS and limited-memory variants~\cite{HuangAbsilGallivan2016_RBFGS}), which maintains a secant approximation to the Hessian operator (or its inverse) during iterations. 
More recently, randomized subspace strategies~\cite{gutman2023coordinate,han2024riemannian} were proposed for high-dimensional manifold computations, restricting each iteration to a low-dimensional tangent subspace. In summary, these techniques avoid forming, storing, and inverting the full operator, and instead exploit problem-specific low-dimensional structures, which prompts a natural question: can one construct an efficient approximation for a tangent-space operator on manifolds with both provable approximation errors and fast computations?

Our goal is therefore to construct an operator $\widehat{\mathcal{H}}_x: \mathrm{T}_x\mathcal M \to \mathrm{T}_x\mathcal M$ on the tangent space of a manifold $\mathcal{M}$ that approximates $\mathcal{H}_x$ with a controlled approximation error:
\begin{equation*}
\|\mathcal{H}_x-\widehat{\mathcal{H}}_x\|_{\mathrm{op}} \leq \phi(\lambda(\mathcal{H}_x)),
\end{equation*}
where $\lambda(\mathcal{H}_x)$ denotes the spectrum of $\mathcal{H}_x$ and $\phi(\lambda(\mathcal{H}_x))$ is a function of $\lambda(\mathcal{H}_x)$. When the construction involves randomization, the bound is interpreted in expectation.
Additionally, $\widehat{\mathcal{H}}_x$ should preserve basic operator properties of $\mathcal{H}_x$, such as self-adjointness and positive semidefiniteness.
It is also desirable to control the perturbation of inverse operators, e.g., 
\begin{equation*}
\|\mathcal{H}_x^{\dagger}-\widehat{\mathcal{H}}_x^{\dagger}\|_{\mathrm{op}}\ \quad \mathrm{or}\ \quad 
\big\|(\mathcal{H}_x + \nu\,\mathrm{Id}_x)^{-1}-(\widehat{\mathcal{H}}_x + \nu\,\mathrm{Id}_x)^{-1}\big\|_{\mathrm{op}}
\end{equation*}
with bounded errors. 

The Nystr\"om approximation~\cite{GittensMahoney2016, martinsson2020randomized} is a powerful technique in large-scale matrix computations.
Given a symmetric positive definite matrix $\mathbf{H}\in\mathbb R^{d\times d}$, the Nystr\"om approximation is defined by 
\begin{equation}
    \label{eq:Euclidean-nystrom}
\widehat{\mathbf{H}} = (\mathbf{H} \mathbf{P}) (\mathbf{P}^\top \mathbf{H} \mathbf{P})^{\dagger} (\mathbf{H} \mathbf{P})^\top,
\end{equation}
where $\ell \ll d$ is the sketch size, $\mathbf{P} \in \mathbb{R}^{d \times \ell}$ is a sketching matrix, e.g., a selection matrix $[e_{i_1},\dots,e_{i_\ell}]\in\mathbb R^{d\times \ell}$ that extracts \(\ell\) coordinate directions, or a random Gaussian matrix, and $\dagger$ denotes the pseudoinverse. A rich theory underpins the success of Nystr\"om approximations. Specifically, the Nystr\"om approximation can be viewed as a low-rank approximation for a matrix while preserving positive semidefiniteness, and is closely connected to eigendecompositions~\cite{martinsson2020randomized}.
Worst-case and probabilistic approximation errors are established under randomized column sampling and projection, e.g., approximation errors in spectral and Frobenius norms~\cite{ZhangTsangKwok2008, Gittens2011,GittensMahoney2016}. 
Beyond matrix approximation, Nystr\"om approximations are widely used in numerical algorithms for acceleration purposes, such as kernel methods~\cite{WilliamsSeeger2001,DrineasMahoney2005,bucci2025numerical}, preconditioned conjugate gradient methods~\cite{Frangella2023} and interior-point proximal methods of multipliers~\cite{chu2026randomized}.
In kernel ridge regression and other kernel learning problems, properly constructed Nystr\"om approximations can match the predictive performance of the full kernel method, while requiring only a limited number of sampled features~\cite{Bach2013,AlaouiMahoney2015}.
More broadly, Nystr\"om theory has been extended to infinite-dimensional Hilbert spaces for non-negative self-adjoint operators~\cite{PerssonBoulleKressner2024}.

On Riemannian manifolds, extending Nystr\"om approximations from matrices to tangent-space operators requires additional care, where the key is to construct a sketching operator from a tangent space to its low-dimensional subspace. The standard Gaussian sketching is usually formulated via Gaussian matrices expressed in a coordinate basis, but there is no canonical coordinate system on manifolds. Hence, 
it is desirable to define both the sketching operators and the Nystr\"om approximation in a coordinate-free manner.
Moreover, in iterative methods, sketching is expected to be efficiently transported between successive tangent spaces via vector transport, which motivates a sketching condition formulated invariant under transport.
To this end, we intend to develop a Riemannian Nystr\"om approximation on manifolds for self-adjoint PSD tangent-space operators, along with a coordinate-free and transport-compatible sketching.

\subsection{Main contributions}
In this paper, we develop a Riemannian Nystr\"om approximation for self-adjoint PSD tangent-space operators on the $d$-dimensional Riemannian manifold $(\mathcal{M}, g)$.
Our main contributions are as follows.

We propose a Riemannian Nystr\"om approximation for self-adjoint PSD operators on the tangent space.
Specifically, let $x \in \mathcal{M}$ and let $\mathcal{H}_x : \mathrm{T}_x\mathcal{M} \to \mathrm{T}_x\mathcal{M}$ be a $g_x$-self-adjoint PSD operator. The Riemannian Nystr\"om approximation of the operator $\mathcal{H}_x$ is defined by
\[
\mathcal{H}_{x, B, \Xi}[u]=\left(\mathcal{H}_x \mathcal{P}_{x, B, \Xi}\left(\mathcal{P}_{x, B, \Xi}^{*} \mathcal{H}_x \mathcal{P}_{x, B, \Xi}\right)^{\dagger} \mathcal{P}_{x, B, \Xi}^{*} \mathcal{H}_x\right)[u], \quad  \text{for all } u \in \mathrm{T}_x \mathcal{M},
\]
where the sketching operator $\mathcal{P}_{x, B, \Xi}$ and its adjoint $\mathcal{P}_{x, B, \Xi}^*$ map onto the $\ell$-dimensional subspaces $\Xi$ and $B$ of $\mathrm{T}_x\mathcal{M}$, respectively. The construction of the sketching operator and the approximation does not rely on the explicit representation of the basis or coordinates.
We establish basic properties analogous to the Euclidean case, including positive semidefiniteness and properties on the range.

To enable randomized error analysis, we introduce a Haar--Grassmann sketching condition, which generalizes the standard Gaussian sketching in the Euclidean setting while remaining compatible with the intrinsic geometry of the manifold.
Under this condition, we establish the approximation error for the Riemannian Nystr\"om approximation as follows:
  \[
  \mathbb E\left[ \big\|\mathcal{H}_x-\mathcal{H}_{x, B, \Xi}\big\|_{\mathrm{op}} \right]
  \le
  \min_{2\le p\le \ell-2}
\Bigg\{\left(1+\tfrac{C_1(\ell-p)}{p-1}\right)\lambda_{\ell-p+1}(\mathcal{H}_x) +\tfrac{C_2\ell}{p^{2}-1}\sum_{j>\ell-p}\lambda_j(\mathcal{H}_x)
  \Bigg\}.
  \]
where $\lambda_{1}(\mathcal{H}_x), \lambda_{2}(\mathcal{H}_x), \ldots, \lambda_{d}(\mathcal{H}_x)$ are eigenvalues of $\mathcal{H}_x$ and $C_1, C_2$ are constants; see~\cref{thm:manifold-22-haar}.
Moreover, the Haar--Grassmann condition is proved to be transport-compatible in the sense that, under an isometric vector transport, the transported sketching still satisfies the Haar--Grassmann condition.

In a coordinate description, we provide efficient computations for the Riemannian Nystr\"om approximation and its pseudoinverse. As an application, we consider optimization problems on a manifold, and propose a randomized Nystr\"om Riemannian Newton-type method, in which the linear system is solved via the Riemannian Nystr\"om approximation.
Numerical experiments on the SPD and Grassmann manifolds demonstrate that the proposed approximation reduces the computational cost of tangent-space operators while maintaining comparable accuracy. In addition, experiments on principal geodesic analysis with real data illustrate reduced memory usage while preserving competitive statistical performance.

\subsection{Organization}
The paper is organized as follows.
Section~\ref{sec:sketching} develops the Riemannian Nystr\"om approximation for tangent-space operators on Riemannian manifolds and establishes its basic properties and approximation errors.
Section~\ref{sec:computations} presents coordinate representations and computational formulas for the Riemannian Nystr\"om approximation.
Section~\ref{sec:riemannian-newton} introduces the randomized Nystr\"om Newton-type method on manifolds as one application of the Riemannian Nystr\"om approximation.
Section~\ref{sec:experiments} reports numerical experiments demonstrating the approximation properties and the effectiveness of the proposed optimization method.
Section~\ref{sec:conclusion} concludes the paper and discusses future work.

\subsection{Notation}
  Let $(\mathcal M,g)$ be a Riemannian manifold and $x\in\mathcal M$.
  The metric \(g_x\) induces an inner product \(\langle\cdot,\cdot\rangle_x\) on \(\mathrm{T}_x\mathcal M\). For \(v\in \mathrm{T}_x\mathcal M\), the vector norm is defined by
$
\|v\|_x:=\sqrt{\langle v,v\rangle_x}.
$
For a linear map or operator $\mathcal{L}:\mathrm{T}_x\mathcal M\to \mathrm{T}_x\mathcal M$, its range is the subspace $\mathrm{range}(\mathcal{L}) := \{\mathcal{L}[u]: u\in \mathrm{T}_x\mathcal M\}$ and its rank is the dimension of the range.
The $g_x$-adjoint of $\mathcal{L}$ is the unique linear map $\mathcal{L}^{*}$ satisfying
$
\langle \mathcal{L}[u],v\rangle_x=\langle u,\mathcal{L}^{*}[v]\rangle_x
$
for all $u,v\in \mathrm{T}_x\mathcal M$.
  We say that $\mathcal{L}$ is {$g_x$-self-adjoint} (or simply self-adjoint if $g_x$ is clear from the context) if $\mathcal{L}=\mathcal{L}^{*}$, or equivalently, in any $g_x$-orthonormal frame $\{b_{x, j}\}_{j=1}^d$, the matrix representation of $\mathcal{L}$ is symmetric.
  Moreover, $\mathcal{L}\succeq 0$ if and only if $\langle u,\mathcal{L}[u]\rangle_x\ge 0$ for all $u\in \mathrm{T}_x\mathcal M$.
  For two operators $\mathcal{L}_1$ and $\mathcal{L}_2$, we denote the Loewner order $\mathcal{L}_1 \succeq \mathcal{L}_2$ if $\mathcal{L}_1-\mathcal{L}_2 \succeq 0$.
  The Moore--Penrose pseudoinverse of $\mathcal{L}$ is denoted by $\mathcal{L}^{\dagger}$.
The operator norm of \(\mathcal{L}\) induced by \(g_x\) is
$
\|\mathcal{L}\|_{\mathrm{op}}:=\sup_{v\neq 0}\frac{\|\mathcal{L}[v]\|_x}{\|v\|_x}=\sup_{\|v\|_x=1}\|\mathcal{L}[v]\|_x.
$
We also use the Hilbert--Schmidt norm as
$
\|\mathcal{L}\|_{\mathrm{HS}}^{2}:=\operatorname{tr}(\mathcal{L}^{*}\mathcal{L}).
$
For any linear subspace \(V\subset \mathrm T_x\mathcal M\), define its \(g_x\)-orthogonal complement by
$
V^{\perp}:=\{w\in \mathrm T_x\mathcal M:\langle w,v\rangle_x=0,\ \forall v\in V\}.
$
The \(g_x\)-orthogonal projection onto \(V\) is the unique linear map
$
\Pi_V:\mathrm T_x\mathcal M\to V
$
such that, for every \(u\in \mathrm T_x\mathcal M\),
$
\Pi_V[u]\in V,
u-\Pi_V[u]\in V^{\perp}.
$
For a smooth function $f:\mathcal M\to\mathbb R$, the Riemannian gradient $\grad f(x)\in \mathrm{T}_x\mathcal M$ is defined by $\langle \grad f(x),u\rangle_x=\mathrm D f(x)[u]$ for all $u\in \mathrm{T}_x\mathcal M$. The Riemannian Hessian $\Hess f(x):\mathrm{T}_x\mathcal M\to \mathrm{T}_x\mathcal M$ is the $g_x$-self-adjoint operator defined by $\Hess f(x)[u]=\nabla_u(\grad f)$, where $\nabla$ is the Levi-Civita connection. 
For two random variables or vectors $X$ and $Y$, we write $X\stackrel{d}{=}Y$ if they are equal in distribution. For two inner-product spaces $V$ and $W$ of the same dimension, we denote by $\mathrm{Iso}(V,W)$ the set of all linear isometries from $V$ to $W$. The Grassmann manifold $\mathrm{Gr}(\ell,V)$ is the set of all $\ell$-dimensional subspaces of a vector space $V$.

\section{Riemannian Nystr\"om approximation and its properties}
\label{sec:sketching}
This section introduces the coordinate-free Riemannian Nystr\"om approximation and then discusses its basic properties, approximation errors under randomized sketching, and transport compatibility under isometric vector transport.

\subsection{Riemannian Nystr\"om approximation}
\label{subsec:Nystrom-manifold}
Let $(\mathcal{M}, g)$ be a $d$-dimensional Riemannian manifold and $x\in\mathcal M$.
Let $B, \Xi\subset \mathrm T_x\mathcal M$ be two $\ell$-dimensional subspaces with $\ell \le d$.
 We specify a full-rank linear map
\begin{equation*}
  \label{eq:F}
\mathcal{F}:B\to \Xi.
\end{equation*}
Let $\Pi_{B}:\mathrm T_x\mathcal M\to B$ and $\Pi_{\Xi}:\mathrm T_x\mathcal M\to\Xi$ denote the $g_x$-orthogonal projections.
We define the sketching operator by
\begin{equation}
  \label{eq:Px-coordfree}
\mathcal P_{x,B,\Xi}\colon \mathrm{T}_x\mathcal{M} \to \mathrm{T}_x\mathcal{M}, \qquad 
v \mapsto \mathcal{F}\,\Pi_{B}[v].
\end{equation}
Note that $\mathrm{range}(\mathcal{P}_{x,B,\Xi})=\Xi$, and hence $\mathcal{P}_{x,B,\Xi}$ is a map from $\mathrm{T}_x\mathcal{M}$ to an $\ell$-dimensional subspace $\Xi$.
Let $\mathcal F^*:\Xi\to B$ be the $g_x$-adjoint of $\mathcal{F}$. It follows from $\mathrm{range}(\mathcal{F}\Pi_B)=\Xi$ and $\mathrm{range}(\mathcal{F}^*\Pi_\Xi)=B$ that 
\[
\langle \mathcal F\Pi_B[v],u\rangle_x
=\langle \mathcal F\Pi_B[v],\Pi_\Xi[u]\rangle_x
=\langle \Pi_B[v],\mathcal F^*\Pi_\Xi[u]\rangle_x
=\langle v,\mathcal F^*\Pi_\Xi[u]\rangle_x
\]
for any \(u,v\in \mathrm T_x\mathcal M\). Hence, the adjoint of $\mathcal{P}_{x,B,\Xi}$ is given by
\begin{equation}
  \label{eq:Px*-coordfree}
 \mathcal P^{*}_{x,B,\Xi} \colon \mathrm{T}_x\mathcal{M} \to \mathrm{T}_x\mathcal{M}, \qquad 
 u \mapsto \mathcal{F}^{*}\,\Pi_{\Xi}[u].
\end{equation}

In the construction of sketching operators, $\Pi_B$ compresses the full tangent space to an $\ell$-dimensional subspace. $\Xi$ is the image space of sketching, namely the $\ell$-dimensional subspace into which the sketching operator $\mathcal{P}_{x, B, \Xi}$ maps. When $B = \Xi$ and $\mathcal{F} = \mathrm{Id}_B$, the sketching operator is simply a subspace projection. The introduction of
$\mathcal{F}$ allows constructions beyond simple subspace projection. In general,
$\mathcal{F}$ serves as the transfer map between the spaces $B$ and $\Xi$. This extra degree of freedom makes it possible to reweight or rotate the compressed information and encode randomness, thereby covering a broader class of sketching operators and potentially improving approximation quality; see examples in Gaussian sketching~\eqref{eq:gaussian-sketching-condition} and Haar--Grassmann sketching in~\cref{def:haar-grassmann-sketch}.

When $\mathcal M=\mathbb R^d$ with the Euclidean metric, if one chooses a full-row-rank sketching matrix $\mathbf{P} \in\mathbb R^{d\times \ell}$ in~\eqref{eq:Euclidean-nystrom}, then the subspace
$B=\operatorname{range} (\mathbf{P}^\top) = \mathbb R^{\ell}$. $\Xi\subset \mathbb R^d$ is the subspace generated by the directions after applying the sketching map, namely
$\Xi=\operatorname{range}(\mathbf{P})$.
The map $\mathcal F$ is the intrinsic counterpart of the Euclidean sketching matrix restricted to the subspace $B$. Once a basis is fixed, $\mathcal{P}_{x, B, \Xi} = \mathcal{F} \Pi_B$ is represented by the sketching matrix $\mathbf{P}$. 

With the sketching operator and its adjoint defined above, the Riemannian Nystr\"om approximation is defined as follows.
\begin{definition}[Riemannian Nystr\"om approximation]
\label{def:Riemannian-nystrom}
  Let $x\in\mathcal M$ and $\mathcal{H}_x:\mathrm T_x\mathcal M\to\mathrm T_x\mathcal M$ be a $g_x$-self-adjoint PSD operator.
  The Riemannian Nystr\"om approximation of $\mathcal{H}_x$ is the operator $\mathcal{H}_{x,B,\Xi}:\mathrm T_x\mathcal M\to\mathrm T_x\mathcal M$ defined by
\begin{equation}
  \label{eq:Nystrom-Hessian}
\mathcal{H}_{x, B, \Xi}[u]=\left(\mathcal{H}_x \mathcal{P}_{x, B, \Xi}\left(\mathcal{P}_{x, B, \Xi}^{*} \mathcal{H}_x \mathcal{P}_{x, B, \Xi}\right)^{\dagger} \mathcal{P}_{x, B, \Xi}^{*} \mathcal{H}_x\right)[u], \quad \text{for all } u \in \mathrm{T}_x \mathcal{M}.
\end{equation}
  The dimension $\ell=\dim(B)$ is called the sketch size.
\end{definition}

An illustration of the sketching operator and the Riemannian Nystr\"om approximation is shown in~\cref{fig:sketching-and-nystrom}. 
Notice that the sketching operator and its adjoint map the tangent space $\mathrm{T}_x\mathcal{M}$ to low-dimensional subspaces, i.e., $\mathrm{range}(\mathcal{P}_{x,B,\Xi})=\Xi$ and $\mathrm{range}(\mathcal{P}_{x,B,\Xi}^{*})=B$. In Riemannian Nystr\"om approximation, the linear system (the pseudoinverse term) is compressed into the low-dimensional subspace $B$ via $\mathcal{P}_{x, B, \Xi}^* \mathcal{H}_x$. After the computation of the pseudoinverse, $\mathcal{H}_x \mathcal{P}_{x, B, \Xi}$ lifts the information back to the original tangent space. Hence, $\mathcal{H}_{x, B, \Xi}$ can be viewed as a low-rank approximation of $\mathcal{H}_x$, which can save storage and computational cost in practice.

The Moore--Penrose pseudoinverse of $\mathcal{H}_{x, B, \Xi}$ can be computed as
\begin{equation}
  \label{eq:Nystrom-Hessian-inverse}
\mathcal{H}_{x, B, \Xi}^{\dagger}[u]=\left(\mathcal{P}_{x, B, \Xi}\left(\mathcal{P}_{x, B, \Xi}^{*} \mathcal{H}_x \mathcal{P}_{x, B, \Xi}\right)^{\dagger} \mathcal{P}_{x, B, \Xi}^{*}\right)[u], \quad \text{for all } u \in \mathrm{T}_x \mathcal{M}.
\end{equation}
It can be verified that $\mathcal{H}_{x, B, \Xi}^{\dagger} \mathcal{H}_{x, B, \Xi} \mathcal{H}_{x, B, \Xi}^{\dagger} = \mathcal{H}_{x, B, \Xi}^{\dagger}$, $\mathcal{H}_{x, B, \Xi} \mathcal{H}_{x, B, \Xi}^{\dagger} \mathcal{H}_{x, B, \Xi} = \mathcal{H}_{x, B, \Xi}$, $(\mathcal{H}_{x, B, \Xi} \mathcal{H}_{x, B, \Xi}^{\dagger})^* = \mathcal{H}_{x, B, \Xi} \mathcal{H}_{x, B, \Xi}^{\dagger}$ and $(\mathcal{H}_{x, B, \Xi}^{\dagger}\mathcal{H}_{x, B, \Xi} )^* = \mathcal{H}_{x, B, \Xi}^{\dagger}\mathcal{H}_{x, B, \Xi}$.

\definecolor{ForwardBlue}{RGB}{0,114,178}
\definecolor{AdjointRed}{RGB}{160,70,130}
\begin{figure}[htbp]
\centering

\begin{subfigure}[t]{0.35\textwidth}
\centering
\begin{tikzpicture}[
  font=\small,
  >={Latex[length=2.2mm,width=1.6mm]},
  txobj/.style={
    draw,
    rounded corners=1.0pt,
    minimum width=2.35cm,
    minimum height=0.76cm,
    inner sep=1.3pt,
    outer sep=0pt,
    fill=white,
    align=center
  },
  subobj/.style={
    draw,
    rounded corners=1.0pt,
    minimum width=0.82cm,
    minimum height=0.58cm,
    inner sep=1.2pt,
    outer sep=0pt,
    fill=white,
    align=center
  },
  pblue/.style={
    ->,
    line width=1.05pt,
    draw=ForwardBlue
  },
  pred/.style={
    ->,
    line width=1.05pt,
    draw=AdjointRed
  },
  lab/.style={
    font=\scriptsize,
    fill=white,
    inner sep=0.8pt
  }
]

\node[txobj] (Tx) at (0,1.95) {$\mathrm{T}_x\mathcal{M}$};
\node[subobj] (B)  at (-1.05,0.72) {$B$};
\node[subobj] (Xi) at ( 1.05,0.72) {$\Xi$};

\coordinate (PB0)  at ($(Tx.south)+(-1.05,0)$);
\coordinate (PXi0) at ($(Tx.south)+( 1.05,0)$);

\draw[pblue]
  (PB0) -- node[lab, left=1pt, text=ForwardBlue] {$\Pi_B$} (B.north);

\draw[pred]
  (PXi0) -- node[lab, right=1pt, text=AdjointRed] {$\Pi_\Xi$} (Xi.north);

\draw[pblue]
  ([yshift=2.0pt]B.east) --
  node[lab, above=2pt, text=ForwardBlue] {$\mathcal F$}
  ([yshift=2.0pt]Xi.west);

\draw[pred]
  ([yshift=-2.0pt]Xi.west) --
  node[lab, below=2pt, text=AdjointRed] {$\mathcal F^{*}$}
  ([yshift=-2.0pt]B.east);

\end{tikzpicture}

\vspace{2pt}
{\scriptsize\color{AdjointRed}$\mathcal P_{x,B,\Xi}^{*}=\mathcal F^{*}\Pi_\Xi$}
\quad 
{\scriptsize\color{ForwardBlue}$\mathcal P_{x,B,\Xi}=\mathcal F\Pi_B$}

\label{fig:sketching-operator-colored}
\end{subfigure}
\hspace{0.1\textwidth}
\begin{subfigure}[t]{0.53\textwidth}
\centering
\begin{tikzpicture}[
  trim left=0pt,
  trim right=28pt,
  font=\small,
  >={Latex[length=2.2mm,width=1.6mm]},
  wideobj/.style={
    draw,
    rounded corners=1.0pt,
    minimum width=0.76cm,
    minimum height=2.35cm,
    inner sep=1.3pt,
    outer sep=0pt,
    fill=white,
    align=center
  },
  subobj/.style={
    draw,
    rounded corners=1.0pt,
    minimum width=0.82cm,
    minimum height=0.58cm,
    inner sep=1.2pt,
    outer sep=0pt,
    fill=white,
    align=center
  },
  pblue/.style={
    ->,
    line width=1.05pt,
    draw=ForwardBlue
  },
  pred/.style={
    ->,
    line width=1.05pt,
    draw=AdjointRed
  },
  pblack/.style={
    ->,
    line width=1.0pt,
    draw=black
  },
  pgray/.style={
    ->,
    line width=0.95pt,
    draw=black!70
  },
  lab/.style={
    font=\scriptsize,
    fill=white,
    inner sep=0.8pt
  },
  longlab/.style={
    font=\scriptsize,
    fill=white,
    inner sep=0.6pt,
    align=center
  }
]

\node[wideobj] (TxL) at (-3.35,0) {$\mathrm{T}_x\mathcal{M}$};
\node[subobj]  (B1)  at (-1,0) {$B$};
\node[subobj]  (B2)  at ( 1,0) {$B$};
\node[wideobj] (TxR) at ( 3.35,0) {$\mathrm{T}_x\mathcal{M}$};

\draw[pgray]
  (TxL.north) -- ++(0,0.72)
  -- node[lab, above=2pt] {$\mathcal H_x$} ($(TxR.north)+(0,0.72)$)
  -- (TxR.north);

\draw[pred]
  (TxL) -- node[lab, above=2pt]
  {$\textcolor{AdjointRed}{\mathcal P_{x,B,\Xi}^{*}}\mathcal H_x$} (B1);

\draw[pblack]
  (B1) -- node[longlab, above=2pt] {$\mathcal J_{x,B,\Xi}^{\dagger}$} (B2);

\draw[pblue]
  (B2) -- node[lab, above=2pt]
  {$\mathcal H_x\textcolor{ForwardBlue}{\mathcal P_{x,B,\Xi}}$} (TxR);

\coordinate (boxNW) at (-2.87,0.64);
\coordinate (boxSE) at ( 2.87,-0.55);
\node[draw,dashed,rounded corners=2pt,fit=(boxNW)(boxSE),inner sep=0pt] (corebox) {};

\node[font=\scriptsize, fill=white, inner sep=1pt] at (0,-0.82)
{$\mathcal H_{x,B,\Xi}$};
\path[use as bounding box] (-3.8,-1.05) rectangle (3.45,1.45);

\end{tikzpicture}

\label{fig:nystrom-flow-colored}
\end{subfigure}

\caption{Left: construction of the sketching operator and its adjoint. Right: flow view of the Riemannian Nystr\"om approximation, where $\mathcal{J}_{x,B,\Xi} = \mathcal{P}_{x,B,\Xi}^* \mathcal{H}_x \mathcal{P}_{x,B,\Xi}$. The blue arrows correspond to the forward sketching map $\mathcal P_{x,B,\Xi}$, while the red arrows correspond to its adjoint $\mathcal P_{x,B,\Xi}^{*}$.}
\label{fig:sketching-and-nystrom}
\end{figure}

\subsection{Basic properties}
The following results establish basic properties of the Riemannian Nystr\"om approximation, which are inherited from the Euclidean Nystr\"om approximation~\cite{martinsson2020randomized}. In this section, we present these properties for the fixed sketching operator, and in the next section we analyze approximation guarantees under randomized sketching.

The following proposition verifies the semidefiniteness and low-rankness of the Riemannian Nystr\"om approximation, which further implies that $\mathcal{H}_{x, B, \Xi}$ is a low-rank approximation of $\mathcal{H}_x$.
\begin{proposition}
  \label{lem:riem-21}
  For $x\in\mathcal M$, the following statements hold.
\begin{enumerate}
\item[\textup{(i)}] It holds that $0 \preceq \mathcal{H}_{x, B, \Xi} \preceq \mathcal{H}_x$ and $\operatorname{rank}(\mathcal{H}_{x, B, \Xi})\le \ell$.

\item[\textup{(ii)}] The range of $\mathcal{H}_{x, B, \Xi}$ depends only on $\mathcal{H}_x$ and $\Xi$. Furthermore,
  \[
  \mathrm{range}(\mathcal{H}_{x, B, \Xi})=\mathrm{range}\big(\mathcal{H}_x\mathcal{P}_{x, B, \Xi}\big)\subseteq \mathrm{range}(\mathcal{H}_x).
  \]
\end{enumerate}
  \end{proposition}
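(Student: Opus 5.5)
The plan is to reduce everything to the square root of $\mathcal{H}_x$ and an orthogonal projection. Since $\mathcal{H}_x$ is $g_x$-self-adjoint and PSD, it has a self-adjoint PSD square root $\mathcal{H}_x^{1/2}$. Abbreviate $\mathcal{P}=\mathcal{P}_{x,B,\Xi}$ and set $\mathcal{Y}:=\mathcal{H}_x^{1/2}\mathcal{P}$. Then $\mathcal{P}^*\mathcal{H}_x\mathcal{P}=\mathcal{Y}^*\mathcal{Y}$, and
\[
\mathcal{H}_{x,B,\Xi}=\mathcal{H}_x^{1/2}\,\mathcal{Y}(\mathcal{Y}^*\mathcal{Y})^{\dagger}\mathcal{Y}^*\,\mathcal{H}_x^{1/2}.
\]
The key step is the standard identity $\mathcal{Y}(\mathcal{Y}^*\mathcal{Y})^{\dagger}\mathcal{Y}^*=\Pi_{\mathrm{range}(\mathcal{Y})}$. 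I would verify it through a singular value decomposition of $\mathcal{Y}$ with respect to $g_x$-orthonormal bases, or equivalently by checking that the operator is self-adjoint, idempotent, and has range $\mathrm{range}(\mathcal{Y})$. These facts follow from the Penrose identities and $\mathrm{range}(\mathcal{Y}^*\mathcal{Y})=\mathrm{range}(\mathcal{Y}^*)$. This is the only genuinely nontrivial step, though it is routine finite-dimensional linear algebra.

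For (i), write $\Pi:=\Pi_{\mathrm{range}(\mathcal{Y})}$, so $\mathcal{H}_{x,B,\Xi}=\mathcal{H}_x^{1/2}\Pi\mathcal{H}_x^{1/2}$. Since $0\preceq\Pi\preceq\mathrm{Id}_x$, conjugating by the self-adjoint $\mathcal{H}_x^{1/2}$ preserves the Loewner order. Concretely, $\langle u,\mathcal{H}_x^{1/2}\Pi\mathcal{H}_x^{1/2}u\rangle_x=\|\Pi\mathcal{H}_x^{1/2}u\|_x^2$ lies in $[0,\|\mathcal{H}_x^{1/2}u\|_x^2]=[0,\langle u,\mathcal{H}_xu\rangle_x]$. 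This gives $0\preceq\mathcal{H}_{x,B,\Xi}\preceq\mathcal{H}_x$. For the rank, $\mathcal{H}_{x,B,\Xi}$ factors through $\mathcal{P}$, so its rank is at most $\operatorname{rank}(\mathcal{P})=\dim\Xi=\ell$.

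For (ii), $\mathrm{range}(\mathcal{H}_{x,B,\Xi})\subseteq\mathrm{range}(\mathcal{H}_x\mathcal{P})$ is immediate from the leftmost factor. For the reverse inclusion, the PSD structure gives
\[
\mathrm{range}(\mathcal{H}_{x,B,\Xi})=\mathrm{range}(\mathcal{H}_x^{1/2}\Pi)=\mathcal{H}_x^{1/2}\,\mathrm{range}(\mathcal{Y})=\mathrm{range}(\mathcal{H}_x^{1/2}\mathcal{Y})=\mathrm{range}(\mathcal{H}_x\mathcal{P}).
\]
The first equality holds because a PSD operator $\mathcal{A}\mathcal{A}^*$, with $\mathcal{A}=\mathcal{H}_x^{1/2}\Pi$, has the same range as $\mathcal{A}$.

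It remains to show that $\mathrm{range}(\mathcal{H}_x\mathcal{P})$ depends only on $\mathcal{H}_x$ and $\Xi$. Since $\mathcal{P}=\mathcal{F}\Pi_B$ and $\Pi_B$ maps onto $B$, we get $\mathrm{range}(\mathcal{P})=\mathcal{F}(B)=\Xi$ because $\mathcal{F}$ is full rank. Hence $\mathrm{range}(\mathcal{H}_x\mathcal{P})=\mathcal{H}_x(\Xi)$, which is independent of $B$ and $\mathcal{F}$, and trivially $\mathcal{H}_x(\Xi)\subseteq\mathrm{range}(\mathcal{H}_x)$.
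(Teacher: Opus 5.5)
Your proposal is correct and follows essentially the same route as the paper. Both write $\mathcal{H}_{x,B,\Xi}=\mathcal{H}_x^{1/2}\Pi\,\mathcal{H}_x^{1/2}$, with $\Pi$ the $g_x$-orthogonal projection onto $\mathrm{range}(\mathcal{H}_x^{1/2}\mathcal{P}_{x,B,\Xi})$, and read off positivity, the Loewner bound, the rank bound and the range from this factorization. Your step $\mathcal{H}_{x,B,\Xi}=\mathcal{A}\mathcal{A}^*$ with $\mathcal{A}=\mathcal{H}_x^{1/2}\Pi$ also justifies $\mathrm{range}(\mathcal{H}_{x,B,\Xi})=\mathcal{H}_x^{1/2}(\mathrm{range}(\Pi))$, which the paper states without comment.
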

  \begin{proof}
\noindent(i) Let $\mathcal{Z}:=\mathcal{H}_x^{\frac{1}{2}}\mathcal{P}_{x, B, \Xi}:\mathrm{T}_x\mathcal M\to \mathrm{T}_x\mathcal M$. Note that $\mathcal{Z}^{*}=\mathcal{P}_{x, B, \Xi}^{*}\mathcal{H}_x^{\frac{1}{2}}$ and $\mathcal{H}_x=\mathcal{H}_x^{\frac{1}{2}}\mathcal{H}_x^{\frac{1}{2}}$. Using the Moore--Penrose pseudoinverse, we have
  \[
  \mathcal{H}_{x, B, \Xi}
  =\mathcal{H}_x^{\frac{1}{2}}\,\mathcal{Z}\,(\mathcal{Z}^{*}\mathcal{Z})^{\dagger}\mathcal{Z}^{*}\,\mathcal{H}_x^{\frac{1}{2}}
  =\mathcal{H}_x^{\frac{1}{2}}\,\Pi_Z\,\mathcal{H}_x^{\frac{1}{2}},
  \]
  where $\Pi_Z:=\mathcal{Z}(\mathcal{Z}^{*}\mathcal{Z})^{\dagger}\mathcal{Z}^{*}$ is the $g_x$-orthogonal projection onto $Z :=\mathrm{range}(\mathcal{Z})$. Indeed, $\Pi_Z$ is $g_x$-self-adjoint and idempotent. 
  Since $\mathcal{H}_x^{\frac{1}{2}}\succeq 0$ and $\Pi_Z$ is a $g_x$-orthogonal projection, $\mathcal{H}_{x, B, \Xi}=\mathcal{H}_x^{\frac{1}{2}}\Pi_Z \mathcal{H}_x^{\frac{1}{2}}\succeq 0$. Moreover, $\operatorname{rank}(\mathcal{H}_{x, B, \Xi})=\operatorname{rank}(\Pi_Z\mathcal{H}_x^{\frac{1}{2}})\le \operatorname{rank}(\Pi_Z)=\operatorname{rank}(\mathcal{Z}) = \operatorname{rank}(\mathcal{H}_x^{\frac{1}{2}} \mathcal{P}_{x, B, \Xi})\le \operatorname{rank}(\mathcal{P}_{x, B, \Xi})\le \ell$.
  For every \(u\in \mathrm{T}_x\mathcal M\),
  \[
  \begin{aligned}
  \langle u,(\mathcal{H}_x-\mathcal{H}_{x, B, \Xi})[u]\rangle_x
  &=\langle (\mathrm{Id}_x-\Pi_Z)\mathcal{H}_x^{\frac{1}{2}}[u],(\mathrm{Id}_x-\Pi_Z)\mathcal{H}_x^{\frac{1}{2}}[u]\rangle_x \\
  &=\|(\mathrm{Id}_x-\Pi_Z)\mathcal{H}_x^{\frac{1}{2}}[u]\|_x^2 
  \ge 0.
  \end{aligned}
  \]

\noindent(ii) The expression $\mathcal{H}_{x, B, \Xi}=\mathcal{H}_x^{\frac{1}{2}}\Pi_Z \mathcal{H}_x^{\frac{1}{2}}$ shows that $\mathcal{H}_{x, B, \Xi}$ depends on $\Pi_Z$, which depends only on the subspace $\mathrm{range}(\mathcal{Z})=\mathrm{range}(\mathcal{H}_x^{\frac{1}{2}}\mathcal{P}_{x, B, \Xi}) = \mathrm{range}(\mathcal{H}_x\mathcal{P}_{x, B, \Xi}) $. Since $\mathrm{range}(\mathcal{Z})$ in turn depends only on $\mathrm{range}(\mathcal{P}_{x, B, \Xi})$,  $\mathrm{range}(\mathcal{H}_{x, B, \Xi})$ depends only on $\mathcal{H}_x$ and $\mathrm{range}(\mathcal{P}_{x, B, \Xi}) = \Xi$. Furthermore, this implies
  \begin{align*}
  \mathrm{range}(\mathcal{H}_{x, B, \Xi})
  =\mathcal{H}_x^{\frac{1}{2}}\big(\mathrm{range}(\Pi_Z)\big)
  =\mathcal{H}_x^{\frac{1}{2}}\big(\mathrm{range}(\mathcal{Z})\big)
  =\mathrm{range}(\mathcal{H}_x\mathcal{P}_{x, B, \Xi})
  \subseteq \mathrm{range}(\mathcal{H}_x).
  \end{align*}
  This yields the desired result.
  \end{proof}

  For tangent-space operators associated with the same subspace and sketching operators, the Riemannian Nystr\"om approximation enjoys the monotonicity property established in the following proposition.

  \begin{proposition}
    \label{prop: monotone}
    Let $x \in \mathcal{M}$ and let $\mathcal{H}_x,\mathcal{H}_x^\prime : \mathrm{T}_x\mathcal{M} \to \mathrm{T}_x\mathcal{M}$ be two $g_x$-self-adjoint PSD operators such that
    $
    \mathcal{H}_x\preceq \mathcal{H}_x^\prime.
    $
    Fix the sketching operator $\mathcal{P}_{x, B, \Xi}$ and denote by $\mathcal{H}_{x, B, \Xi}, \mathcal{H}_{x, B, \Xi}^\prime$ the Riemannian Nystr\"om approximation of $\mathcal{H}_x$ and $\mathcal{H}_x^\prime$, respectively. Then it holds that
    \[
    \mathcal{H}_x - \mathcal{H}_{x, B, \Xi} \preceq \mathcal{H}_x^\prime - \mathcal{H}_{x, B, \Xi}^\prime.
    \]
    \end{proposition}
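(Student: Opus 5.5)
We reduce the statement to a variational characterization of the Schur complement. Write $\mathcal{P}:=\mathcal{P}_{x,B,\Xi}$ and $\mathcal{E}(\mathcal{H}):=\mathcal{H}-\mathcal{H}\mathcal{P}(\mathcal{P}^*\mathcal{H}\mathcal{P})^{\dagger}\mathcal{P}^*\mathcal{H}$ for any PSD $\mathcal{H}$. The goal is to show that, for every $u\in\mathrm{T}_x\mathcal M$,
\[
\langle u,\mathcal{E}(\mathcal{H})[u]\rangle_x=\min_{w\in \mathrm{T}_x\mathcal M}\langle u-\mathcal{P}[w],\mathcal{H}[u-\mathcal{P}[w]]\rangle_x .
\]
Once this holds, monotonicity is immediate. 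Let $w^\prime$ be a minimizer for $\mathcal{H}_x^\prime$. Then
\[
\langle u,\mathcal{E}(\mathcal{H}_x)[u]\rangle_x\le \langle u-\mathcal{P}[w^\prime],\mathcal{H}_x[u-\mathcal{P}[w^\prime]]\rangle_x\le \langle u-\mathcal{P}[w^\prime],\mathcal{H}_x^\prime[u-\mathcal{P}[w^\prime]]\rangle_x=\langle u,\mathcal{E}(\mathcal{H}_x^\prime)[u]\rangle_x .
\]
The first inequality is minimality and the second is $\mathcal{H}_x\preceq\mathcal{H}_x^\prime$. This gives the Loewner inequality because both sides are self-adjoint.

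To prove the variational identity, I reuse the factorization from the proof of \cref{lem:riem-21}. With $\mathcal{Z}=\mathcal{H}^{\frac12}\mathcal{P}$ we have $\mathcal{E}(\mathcal{H})=\mathcal{H}^{\frac12}(\mathrm{Id}_x-\Pi_Z)\mathcal{H}^{\frac12}$, where $Z=\mathrm{range}(\mathcal{Z})$. Then $\langle u,\mathcal{E}(\mathcal{H})[u]\rangle_x=\|(\mathrm{Id}_x-\Pi_Z)\mathcal{H}^{\frac12}[u]\|_x^2=\min_{z\in Z}\|\mathcal{H}^{\frac12}[u]-z\|_x^2$, the squared distance from $\mathcal{H}^{\frac12}[u]$ to $Z$. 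Every $z\in Z$ has the form $z=\mathcal{H}^{\frac12}\mathcal{P}[w]$, so this minimum equals $\min_w\|\mathcal{H}^{\frac12}(u-\mathcal{P}[w])\|_x^2=\min_w\langle u-\mathcal{P}[w],\mathcal{H}[u-\mathcal{P}[w]]\rangle_x$. The minimum is attained because $Z$ is a finite-dimensional subspace.

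The main point needing care is that the pseudoinverse expression genuinely equals $\mathcal{H}^{\frac12}\Pi_Z\mathcal{H}^{\frac12}$, i.e. that $\mathcal{Z}(\mathcal{Z}^*\mathcal{Z})^{\dagger}\mathcal{Z}^*$ is the orthogonal projection onto $Z$ even when $\mathcal{P}^*\mathcal{H}\mathcal{P}$ is singular. This is already established in \cref{lem:riem-21}(i), and it is applied separately to $\mathcal{H}_x$ and to $\mathcal{H}_x^\prime$. In those two applications the spaces $Z$ differ, but the minimization variable $w$ ranges over the same space $\mathrm{T}_x\mathcal M$, and $\mathcal{P}$ is common to both, which is exactly what the argument needs. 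No further obstacle is expected.
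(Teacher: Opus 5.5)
Your proof is correct, but it takes a different route from the paper's. The paper embeds the problem in the block operator $\mathcal K(\mathcal H)$ with blocks $\mathcal H$, $\mathcal H\mathcal P$, $\mathcal P^*\mathcal H$, $\mathcal P^*\mathcal H\mathcal P$. It identifies $\mathcal H-\mathcal H_{x,B,\Xi}$ as the generalized Schur complement of the lower-right block and notes that $\mathcal K$ is linear, hence monotone, in $\mathcal H$. It then cites monotonicity of generalized Schur complements (Ando; Anderson--Trapp) as a black box.

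You instead derive the minimum-energy characterization $\langle u,(\mathcal H-\mathcal H_{x,B,\Xi})[u]\rangle_x=\min_{w}\langle u-\mathcal P[w],\mathcal H[u-\mathcal P[w]]\rangle_x$ directly from the factorization $\mathcal H^{1/2}(\mathrm{Id}_x-\Pi_Z)\mathcal H^{1/2}$ of Proposition~\ref{lem:riem-21}. Monotonicity is then a one-line comparison of two minimizations over the same feasible set. This is exactly the shorted-operator variational principle behind the cited theorem, specialized to this block structure: expanding the quadratic form of $\mathcal K(\mathcal H)$ at $(u,-w)$ gives $\langle u-\mathcal P[w],\mathcal H[u-\mathcal P[w]]\rangle_x$. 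Your argument is therefore the unwrapped, self-contained version of the paper's.

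What your route buys: it needs no external reference, and it handles a singular $\mathcal P^*\mathcal H\mathcal P$ transparently through the identity $\mathcal Z(\mathcal Z^*\mathcal Z)^{\dagger}\mathcal Z^*=\Pi_Z$. Since $\{\mathcal P[w]\}=\Xi$, it also shows that the residual form is the minimal $\mathcal H$-energy of $u-\xi$ over $\xi\in\Xi$, so it depends only on $\Xi$, consistent with Proposition~\ref{lem:riem-21}(ii). What the paper's route buys: it is shorter and places the result within classical Schur-complement theory, at the cost of leaving the key inequality to the literature.
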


    \begin{proof}
      Denote
      $\mathcal P := \mathcal{P}_{x,B,\Xi}$ for abbreviation.
      Consider the block operator
      \[
      \mathcal K(\mathcal{H})
      :=
      \begin{pmatrix}
      \mathcal{H} & \mathcal{H}\mathcal P\\
      \mathcal P^{*} \mathcal{H} & \mathcal P^{*} \mathcal{H}\mathcal P
      \end{pmatrix}.
      \]
      This operator is $g_x$-self-adjoint and positive semidefinite, and 
    the generalized Schur complement of the lower-right block satisfies that
      \[
      \mathcal K(\mathcal{H}_x)\,/\,(\mathcal P^{*}\mathcal{H}_x\mathcal P) := \mathcal{H}_x - \mathcal{H}_x \mathcal{P} (\mathcal{P}^* \mathcal{H}_x \mathcal{P})^\dagger \mathcal{P}^* \mathcal{H}_x = \mathcal{H}_x-\mathcal{H}_{x,B,\Xi}.
      \]
      Moreover, if $\mathcal{H}_x\preceq \mathcal{H}_x'$, then $\mathcal K(\mathcal{H}_x)\preceq \mathcal K(\mathcal{H}_x')$ since
      $
      \mathcal K(\mathcal{H}_x')-\mathcal K(\mathcal{H}_x)=\mathcal K(\mathcal{H}_x'-\mathcal{H}_x)\succeq 0.
      $
      The generalized Schur complement is monotone in the Loewner order; see, e.g.,~\cite{Ando1979GeneralizedSchur,AndersonTrapp1975ShortedII}. Therefore,
      $
      \mathcal K(\mathcal{H}_x)\preceq \mathcal K(\mathcal{H}_x')
      $ implies that
      $
      \mathcal K(\mathcal{H}_x)\,/\,(\mathcal P^{*}\mathcal{H}_x\mathcal P)
      \preceq
      \mathcal K(\mathcal{H}_x')\,/\,(\mathcal P^{*}\mathcal{H}_x'\mathcal P).
      $
      Substituting the Schur complements above yields
      $\mathcal{H}_x-\mathcal{H}_{x,B,\Xi}\preceq \mathcal{H}_x'-\mathcal{H}_{x,B,\Xi}'$.
      \end{proof}

The next result shows the optimality of the Riemannian Nystr\"om approximation within a given subspace under the Loewner order.
  \begin{proposition}
    \label{prop:nystrom-loewner-opt}
    Denote ${U}:=\operatorname{range}\!\big(\mathcal{H}_x\mathcal{P}_{x, B, \Xi}\big)$. $\mathcal{H}_{x, B, \Xi}$ is the unique maximum element of the operator set
    \[
    \mathcal{C}_{U} \;:=\; \left\{\mathcal{G}:\mathcal{G} \text{ is $g_x$-self-adjoint and PSD}, \operatorname{range}(\mathcal{G})\subseteq U,\ \mathcal{G}\preceq \mathcal{H}_x\,\right\}
    \]
    in the sense of Loewner order.
    \end{proposition}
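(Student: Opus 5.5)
First, $\mathcal{H}_{x,B,\Xi}\in\mathcal{C}_U$. Self-adjointness and $0\preceq \mathcal{H}_{x,B,\Xi}\preceq \mathcal{H}_x$ are given by \cref{lem:riem-21}(i). The range condition $\operatorname{range}(\mathcal{H}_{x,B,\Xi})=U$ is \cref{lem:riem-21}(ii). The main work is to show that every $\mathcal{G}\in\mathcal{C}_U$ satisfies $\mathcal{G}\preceq\mathcal{H}_{x,B,\Xi}$. Uniqueness is then automatic, since any two maximum elements dominate each other and the Loewner order is antisymmetric on self-adjoint operators.

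Write $\mathcal{H}:=\mathcal{H}_x$ and $\mathcal{P}:=\mathcal{P}_{x,B,\Xi}$. As in the proof of \cref{lem:riem-21}, we have $\mathcal{H}_{x,B,\Xi}=\mathcal{H}^{1/2}\Pi_Z\mathcal{H}^{1/2}$ with $Z=\operatorname{range}(\mathcal{H}^{1/2}\mathcal{P})$. Note that $U=\mathcal{H}^{1/2}(Z)$.

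Take $\mathcal{G}\in\mathcal{C}_U$. From $0\preceq\mathcal{G}\preceq\mathcal{H}$ we get $\ker\mathcal{H}\subseteq\ker\mathcal{G}$. Hence $\mathcal{G}=\mathcal{H}^{1/2}\mathcal{C}\mathcal{H}^{1/2}$ with
\[
\mathcal{C}:=(\mathcal{H}^{1/2})^{\dagger}\mathcal{G}(\mathcal{H}^{1/2})^{\dagger},
\]
which is self-adjoint and PSD and is supported on $\operatorname{range}(\mathcal{H})$. This identity holds because $\mathcal{H}^{1/2}(\mathcal{H}^{1/2})^{\dagger}$ is the orthogonal projection onto $\operatorname{range}(\mathcal{H})$, which contains $\operatorname{range}(\mathcal{G})$. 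The hypothesis $\mathcal{G}\preceq\mathcal{H}$ becomes $\langle w,\mathcal{C}w\rangle_x\le\|w\|_x^2$ for $w\in\operatorname{range}(\mathcal{H}^{1/2})$, and trivially for $w$ in the kernel, so $\mathcal{C}\preceq\mathrm{Id}_x$.

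The range condition is the key step. Since $\operatorname{range}(\mathcal{G})\subseteq U=\mathcal{H}^{1/2}(Z)$, we get
\[
\operatorname{range}(\mathcal{C})\subseteq(\mathcal{H}^{1/2})^{\dagger}\mathcal{H}^{1/2}(Z)=\Pi_{\operatorname{range}(\mathcal{H})}(Z)=Z.
\]
The last equality uses $Z\subseteq\operatorname{range}(\mathcal{H}^{1/2})=\operatorname{range}(\mathcal{H})$. Thus $\mathcal{C}$ is a self-adjoint PSD operator with range in $Z$ and $\mathcal{C}\preceq\mathrm{Id}_x$, which forces $\mathcal{C}=\Pi_Z\mathcal{C}\Pi_Z\preceq\Pi_Z$. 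Conjugating by $\mathcal{H}^{1/2}$ preserves the Loewner order, so $\mathcal{G}\preceq\mathcal{H}^{1/2}\Pi_Z\mathcal{H}^{1/2}=\mathcal{H}_{x,B,\Xi}$.

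The main obstacle I expect is exactly this pull-back of the range and Loewner constraints through the possibly singular $\mathcal{H}^{1/2}$. One needs the kernel inclusion $\ker\mathcal{H}\subseteq\ker\mathcal{G}$, which follows from $\langle u,\mathcal{G}u\rangle_x\le\langle u,\mathcal{H}u\rangle_x=0$ together with $\mathcal{G}\succeq0$. One also needs care that the pseudoinverse maps $U$ back into $Z$ rather than merely into $Z+\ker\mathcal{H}$. If that bookkeeping becomes awkward, the fallback is the Schur-complement argument of \cref{prop: monotone}. In that route one shows that $\mathcal{H}-\mathcal{G}\succeq0$ with $\operatorname{range}(\mathcal{G})\subseteq U$ implies $\mathcal{G}\preceq\mathcal{H}-(\mathcal{H}-\mathcal{H}_{x,B,\Xi})$, using that the shorted operator $\mathcal{H}-\mathcal{H}_{x,B,\Xi}$ is the maximal PSD operator below $\mathcal{H}$ that vanishes on the appropriate subspace, as in the Anderson--Trapp characterization.
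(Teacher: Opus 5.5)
Your proof is correct and takes essentially the same route as the paper. You conjugate by $(\mathcal{H}_x^{1/2})^{\dagger}$ to obtain an operator with $0\preceq\mathcal{C}\preceq\mathrm{Id}_x$ and $\operatorname{range}(\mathcal{C})\subseteq Z$, deduce $\mathcal{C}=\Pi_Z\mathcal{C}\Pi_Z\preceq\Pi_Z$, and conjugate back by $\mathcal{H}_x^{1/2}$; your explicit check that $\mathcal{G}=\mathcal{H}_x^{1/2}\mathcal{C}\mathcal{H}_x^{1/2}$ (via $\operatorname{range}(\mathcal{G})\subseteq\operatorname{range}(\mathcal{H}_x)$) fills in a step the paper uses implicitly, and the Schur-complement fallback is not needed.
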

    
\begin{proof}
  Following the proof of \cref{lem:riem-21},
  let $\mathcal{Z}:=\mathcal{H}_x^{\frac12}\mathcal P_{x,B,\Xi}$, $Z := \mathrm{range}(\mathcal{Z})$ and $\Pi_Z:=\mathcal{Z}(\mathcal{Z}^{*}\mathcal{Z})^{\dagger}\mathcal{Z}^{*}$, then $\Pi_Z$ is the $g_x$-orthogonal projection onto $Z$.
  We have
$\mathcal{H}_{x,B,\Xi}=\mathcal{H}_x^{\frac12}\,\Pi_Z\,\mathcal{H}_x^{\frac12}
$  , $\operatorname{range}(\mathcal{H}_{x,B,\Xi})=\operatorname{range}(\mathcal{H}_x\mathcal P_{x,B,\Xi})=U,
  $
  and therefore $\mathcal{H}_{x,B,\Xi}\in\mathcal C_U$.

  Let $\mathcal{G}\in\mathcal C_U$ be an arbitrary element.
  Let
  $
  \mathcal{A}:=\mathcal{H}_x^{\dagger/2}\,\mathcal{G}\,\mathcal{H}_x^{\dagger/2}.
  $
  Then $\mathcal{A}$ is $g_x$-self-adjoint and $\mathcal{A}\succeq 0$. Moreover,
  $0 \preceq \mathcal{G} \preceq \mathcal{H}_x$ implies
  $
  \mathcal{A}
  \preceq
  \mathcal{H}_x^{\dagger/2}\,\mathcal{H}_x\,\mathcal{H}_x^{\dagger/2}
  =
  \Pi_{\operatorname{range}(\mathcal{H}_x)}
  \preceq
  \mathrm{Id}_x.
  $
  The range constraint $\operatorname{range}(\mathcal{G})\subseteq U=\operatorname{range}(\mathcal{H}_x^{\frac{1}{2}}\Pi_Z)$ implies
  $
  \operatorname{range}(\mathcal{H}_x^{\frac{1}{2}}\mathcal{A})\subseteq \operatorname{range}(\mathcal{H}_x^{\frac{1}{2}}\Pi_Z)
  $
  Since $\mathcal{H}_x^{\frac{1}{2}}$ is injective on $\operatorname{range}(\mathcal{H}_x)$, the inclusion above implies
  $
  \operatorname{range}(\mathcal{A})\subseteq \operatorname{range}(\Pi_Z)=Z.
  $
  Hence $\Pi_Z\mathcal{A}=\mathcal{A}$. Since $\mathcal{A}$ is self-adjoint, it also follows that
  $
  \mathcal{A}=(\Pi_Z\mathcal{A})^*=\mathcal{A}\Pi_Z.
  $
  Therefore,
  $
  \mathcal{A}=\Pi_Z\,\mathcal{A}\,\Pi_Z.
  $
  Using $0\preceq \mathcal{A}\preceq \mathrm{Id}_x$, we have
  $
  0\preceq \mathcal{A}=\Pi_Z\mathcal{A}\Pi_Z\preceq \Pi_Z\,\mathrm{Id}_x\,\Pi_Z=\Pi_Z.
  $
  Multiplying by $\mathcal{H}_x^{\frac{1}{2}}$ on both sides yields
  \[
  \mathcal{G}=\mathcal{H}_x^{\frac{1}{2}}\mathcal{A}\mathcal{H}_x^{\frac{1}{2}}
  \preceq
  \mathcal{H}_x^{\frac{1}{2}}\Pi_Z\mathcal{H}_x^{\frac{1}{2}}
  =\mathcal{H}_{x,B,\Xi}.
  \]
  Since $\mathcal{G}\in\mathcal C_U$ is arbitrary, this shows that $\mathcal{H}_{x,B,\Xi}$ is the maximal element of $\mathcal C_U$ in the Loewner order.
  Uniqueness follows immediately: if $\widetilde{\mathcal{G}}\in\mathcal C_U$ is also maximum, then
  $\widetilde{\mathcal{G}}\preceq \mathcal{H}_{x,B,\Xi}$ and $\mathcal{H}_{x,B,\Xi}\preceq \widetilde{\mathcal{G}}$, hence $\widetilde{\mathcal{G}}=\mathcal{H}_{x,B,\Xi}$.
  \end{proof}

\subsection{Approximation errors}
This section discusses the randomized approximation errors.
Conditioned on $B$, the randomness of Riemannian Nystr\"om approximation derives from the map $\mathcal{F}$. In the construction of sketching operators, we can treat $\mathcal{F}$ as a random map connecting two $\ell$-dimensional subspaces of $\mathrm{T}_x \mathcal{M}$ and then set $\Xi$ to its image space. Alternatively, we can view $\Xi$ as a random element in the Grassmann manifold $\mathrm{Gr}(\ell, \mathrm{T}_x \mathcal{M})$~\cite{bendokat2024grassmann}, i.e., the set of all $\ell$-dimensional subspaces of $\mathrm{T}_x \mathcal{M}$, and afterward let $\mathcal{F}: B \to \Xi$ be a random map.
In this section, the approximation error is first analyzed under the Gaussian sketching condition analogous to the Euclidean case~\cite{martinsson2020randomized}, and then extended to the results under a weaker, intrinsic and geometric Haar--Grassmann sketching condition.

\paragraph{Gaussian sketching} In the manifold setting, a sketching operator $\mathcal{P}_{x, B, \Xi}$ satisfies the Gaussian sketching condition if, conditioned on $B$, the collection of real random variables
\[
\big\{\,\langle \mathcal F[u],w\rangle_x : u \in B, \ w\in\mathrm T_x\mathcal M\,\big\}
\]
is jointly Gaussian and satisfies
\begin{equation}
  \label{eq:gaussian-sketching-condition}
\begin{aligned}
& \mathbb E\left[\,\langle \mathcal F[u],w\rangle_x \mid B\right]=0, \quad \text{ for all } u \in B,\ \text{ for all } w\in\mathrm T_x\mathcal M, \\
& \mathbb E\,\left[\langle \mathcal F[u],w\rangle_x\,\langle \mathcal F[v],z\rangle_x \mid B\right]
=\langle u,v\rangle_x\,\langle w,z\rangle_x,
\quad \text{ for all } u,v \in B, \text{ for all } w,z\in\mathrm T_x\mathcal M.
\end{aligned}
\end{equation}
Equivalently, conditioned on $B$, for every $u\in B$, $\mathcal F[u] \sim \mathcal N\big(0, \|u\|_x^{2}\,\mathrm{Id}_x\big)$ on $\mathrm T_x\mathcal M$, and for any $u,v\in B$ with $\langle u,v\rangle_x=0$, the random vectors $\mathcal F[u]$ and $\mathcal F[v]$ are independent.

Gaussian sketching is equivalent to the following coordinate description.
For any $g_x$-orthonormal frame $\{b_{x, j}\}_{j=1}^d$ of $\mathrm T_x\mathcal M$ and any $g_x$-orthonormal basis $\{b^\prime_{x, i}\}_{i=1}^{\ell}$ of $B$ ($\{b^\prime_{x, i}\}_{i=1}^{\ell}$ can be a subset of $\{b_{x, j}\}_{j=1}^d$ for convenience), define
\[
\omega_{ji}:=\langle \mathcal F[b^\prime_{x, i}],b_{x, j}\rangle_x,
\qquad j=1,\dots,d,\quad i=1,\dots,\ell.
\]
Then Gaussian sketching holds if and only if
$
\omega_{ji}\stackrel{\mathrm{i.i.d.}}{=}\mathcal N(0,1),
j=1,\dots,d, i=1,\dots,\ell.
$

\begin{remark}
\label{rem:gaussian-coordinates}
The coordinate formulation of Gaussian sketching on a manifold closely corresponds to the Gaussian sketching defined in a Hilbert space~\cite{PerssonBoulleKressner2024}. Specifically,
let
  $
J:\mathbb R^{\ell}\to B$ be the linear map defined by $J e_i=b^\prime_{x,i}$ for $i=1,\dots,\ell$, where $\{e_i\}_{i=1}^{\ell}$ denotes the canonical basis of $\mathbb R^{\ell}$. Since $\{b^\prime_{x,i}\}_{i=1}^{\ell}$ is $g_x$-orthonormal, $J$ is an isometric isomorphism. Moreover, because $\mathcal P_{x,B,\Xi}=\mathcal F\Pi_B$ and $J a\in B$,  for every $a\in\mathbb R^{\ell}$, we have
$
(\mathcal P_{x,B,\Xi}\circ J)[a]
=
\mathcal F\Pi_B[J a]
=
\mathcal F[J a].
$
Therefore the operator
\[
\Omega:=\mathcal P_{x,B,\Xi}\circ J=\mathcal F\circ J:
\mathbb R^{\ell}\to \mathrm T_x\mathcal M
\]
is the precise counterpart of the matrix (finite-dimensional case) or quasimatrix (infinite-dimensional case)
proposed in~\cite{PerssonBoulleKressner2024}.
In particular,
$
\Omega e_i
=
\mathcal F[b^\prime_{x,i}], i=1,\dots,\ell,
$
and its matrix representation in the frame $\{b_{x,j}\}_{j=1}^d$ is the coefficient matrix
\[
[\Omega]_{ji}
=
\langle b_{x,j},\Omega e_i\rangle_x
=
\langle b_{x,j},\mathcal F[b^\prime_{x,i}]\rangle_x
=
\omega_{ji},
\qquad j=1,\dots,d,\quad i=1,\dots,\ell.
\]
Hence, when a Hilbert space is specialized to $\mathrm T_x\mathcal M$ equipped with $\langle\cdot,\cdot\rangle_x$, the Gaussian sketching in~\cite{PerssonBoulleKressner2024} coincides with the Gaussian sketching condition introduced above on a manifold.
\end{remark}

\paragraph{Haar–Grassmann sketching} Although the Gaussian sketching~\cref{eq:gaussian-sketching-condition} is coordinate-free in form, \cref{rem:gaussian-coordinates} shows that it is still equivalent to a specific Gaussian coefficient model in orthonormal coordinates.
It is desirable to introduce an intrinsic sketching condition stated directly in terms of geometry.
Motivated by a viewpoint that appears in the literature on randomized subspace methods (e.g.,~\cite{halko2011finding}), 
we impose a polar factorization~\cite{higham2008functions} in which the isometric component is Haar-uniform~\cite{chikuse2003special} conditionally on $B, \Xi$ and independent of the radial factor.

By the polar decomposition theorem~\cite{higham2008functions}, $\mathcal F$ admits the polar decomposition
\[
\mathcal F=\mathcal U\mathcal R,
\]
where $\mathcal U:B\to \Xi$ is a linear isometry and $\mathcal R:B\to B$ is $g_x$-self-adjoint and positive definite. This leads to the following Haar--Grassmann sketching condition. A key geometric advantage of this condition is that it is formulated entirely in terms of subspaces and isometries, without reference to a particular coordinate system, and is therefore naturally compatible with intrinsic manifold operations such as vector transport.

\begin{definition}
[Haar--Grassmann sketching condition]
  \label{def:haar-grassmann-sketch}
  Assume the randomness is conditioned on $B$. 
  We say that the sketching operator $\mathcal{P}_{x, B, \Xi}$ satisfies the Haar--Grassmann sketching condition if the following statements hold.
\begin{enumerate}[label=\textup{(\roman*)}, ref=\textup{(\roman*)}]
\item \label{itm:haar-grassmann-sketch-haar}
The random subspace $\Xi$ is Haar-uniform on $\mathrm{Gr}(\ell,\mathrm T_x\mathcal M)$.

\item \label{itm:haar-grassmann-sketch-polar}
Almost surely, $\operatorname{rank}(\mathcal F)=\ell$.
Conditionally on $\Xi$, the isometry $\mathcal U$ is Haar-uniform on the set of all linear isometries from $B$ to $\Xi$, and $\mathcal U$ is independent of $\mathcal R$.

\item \label{itm:haar-grassmann-sketch-moment}
The radial factor $\mathcal R$ satisfies
\[
\mathbb E\big[\mathcal R^2 \mid B\big]=d\,\mathrm{Id}_B,
\qquad
\mathbb E\big[\|\mathcal R^{-1}\|_{\mathrm{op}}^2 \mid B\big]<\infty.
\]
\end{enumerate}
\end{definition}

Without loss of generality, we assume that the randomness is conditioned on $B$ throughout the paper and drop the conditioning notation for brevity.
The Gaussian sketching condition implies the Haar--Grassmann sketching condition, as in the following proposition.

\begin{proposition}
  \label{prop:gaussian-implies-haar-grassmann}
  Let $d > \ell + 1$. Then Gaussian sketching implies the Haar--Grassmann sketching condition.
\end{proposition}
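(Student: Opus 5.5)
We work in coordinates. Fix an orthonormal basis $\{b'_{x,i}\}_{i=1}^{\ell}$ of $B$ and an orthonormal frame $\{b_{x,j}\}_{j=1}^d$ of $\mathrm T_x\mathcal M$. By \cref{rem:gaussian-coordinates}, $\mathcal F$ is then represented by a $d\times\ell$ matrix $\mathbf G$ with i.i.d.\ $\mathcal N(0,1)$ entries. Under these identifications, the polar factorization $\mathcal F=\mathcal U\mathcal R$ corresponds to the thin polar decomposition
\[
\mathbf G=\mathbf Q\mathbf S,\qquad \mathbf S=(\mathbf G^\top\mathbf G)^{1/2}\in\mathbb R^{\ell\times\ell},\qquad \mathbf Q=\mathbf G(\mathbf G^\top\mathbf G)^{-1/2}.
\]
Here $\Xi=\operatorname{range}(\mathbf G)$, and $\mathbf S$ represents $\mathcal R$ in the basis of $B$. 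The key tool throughout is rotational invariance: $\mathbf O\mathbf G\mathbf V\stackrel{d}{=}\mathbf G$ for all fixed $\mathbf O\in O(d)$ and $\mathbf V\in O(\ell)$.

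\textbf{Rank and Haar subspace, condition~\ref{itm:haar-grassmann-sketch-haar}.} A Gaussian matrix with $d\ge \ell$ has full column rank almost surely, since the singular set has measure zero. So $\operatorname{rank}(\mathcal F)=\ell$ a.s., and $\mathbf S$ is positive definite. For $\mathbf O\in O(d)$ we have $\operatorname{range}(\mathbf O\mathbf G)=\mathbf O\,\operatorname{range}(\mathbf G)$ and $\mathbf O\mathbf G\stackrel{d}{=}\mathbf G$. Hence the law of $\Xi$ is $O(d)$-invariant on $\mathrm{Gr}(\ell,\mathrm T_x\mathcal M)$. Since $O(d)$ acts transitively there, this law is the unique invariant (Haar) measure.

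\textbf{Polar factors, condition~\ref{itm:haar-grassmann-sketch-polar}.} Because $\mathbf O\mathbf G=(\mathbf O\mathbf Q)\mathbf S$, the pair $(\mathbf Q,\mathbf S)$ satisfies $(\mathbf O\mathbf Q,\mathbf S)\stackrel{d}{=}(\mathbf Q,\mathbf S)$ for every $\mathbf O\in O(d)$.

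From this we show that, conditionally on $\mathbf S$, $\mathbf Q$ is Haar on the Stiefel manifold $\mathrm{St}(d,\ell)$. Take $\mathbf O$ Haar on $O(d)$ and independent of $(\mathbf Q,\mathbf S)$. Then $(\mathbf O\mathbf Q,\mathbf S)\stackrel{d}{=}(\mathbf Q,\mathbf S)$, and conditionally on $(\mathbf Q,\mathbf S)$ the matrix $\mathbf O\mathbf Q$ is Haar on $\mathrm{St}(d,\ell)$, independent of $\mathbf S$. Thus $\mathbf Q$ is independent of $\mathbf S$ and Haar on the Stiefel manifold.

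It remains to pass to the conditional law given $\Xi$. Write $\mathbf Q=\mathbf Y\mathbf W$, where $\mathbf Y$ is a (measurable) orthonormal basis of $\Xi$ and $\mathbf W\in O(\ell)$. The invariance of the Haar Stiefel measure under right multiplication by $O(\ell)$ gives that, given $\Xi$, $\mathbf W$ is Haar on $O(\ell)$, independent of $\mathbf S$. Equivalently, $\mathcal U$ is Haar on $\mathrm{Iso}(B,\Xi)$ given $\Xi$ and independent of $\mathcal R$. I expect this disintegration to be the most delicate step. I would handle it with a fibration argument: the Haar measure on the Stiefel manifold factors as Haar on the Grassmannian times Haar on the fibre $O(\ell)$.

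\textbf{Moments, condition~\ref{itm:haar-grassmann-sketch-moment}.} First, $\mathbb E[\mathbf S^2]=\mathbb E[\mathbf G^\top\mathbf G]=d\,\mathbf I_\ell$, which is $\mathbb E[\mathcal R^2]=d\,\mathrm{Id}_B$. Second, $\|\mathcal R^{-1}\|_{\mathrm{op}}^2=1/\sigma_{\min}(\mathbf G)^2\le \operatorname{tr}((\mathbf G^\top\mathbf G)^{-1})$. The matrix $\mathbf G^\top\mathbf G$ is Wishart $W_\ell(d,\mathbf I)$, and the standard inverse-Wishart mean gives
\[
\mathbb E\big[(\mathbf G^\top\mathbf G)^{-1}\big]=\frac{1}{d-\ell-1}\,\mathbf I_\ell,
\]
which is finite precisely when $d>\ell+1$. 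This is where the hypothesis of the proposition enters. Hence $\mathbb E\|\mathcal R^{-1}\|_{\mathrm{op}}^2\le \ell/(d-\ell-1)<\infty$, which completes the verification.
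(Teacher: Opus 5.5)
Your proof is correct and follows the same skeleton as the paper. Both use coordinates via \cref{rem:gaussian-coordinates}, left $O(d)$-invariance for the Haar law of $\Xi$, the matrix polar decomposition, right $O(\ell)$-invariance for the conditional Haar law of $\mathcal U$, and $\mathbb E[\mathbf S^2]=\mathbb E[\mathbf G^\top\mathbf G]=d\,\mathbf I_\ell$. Two sub-steps differ, and both are more rigorous in your version.

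First, the paper gets the conditional Haar law by applying right invariance directly to the Gaussian matrix, using $\mathbf W(\mathbf\Omega\mathbf O)=\mathbf W(\mathbf\Omega)\mathbf O$. It then simply asserts that Gaussian polar factors are independent. You prove that independence, by averaging $(\mathbf O\mathbf Q,\mathbf S)\stackrel{d}{=}(\mathbf Q,\mathbf S)$ over an independent Haar $\mathbf O\in O(d)$. The disintegration you flag as delicate needs no more than the paper's one-line argument. Since $(\mathbf Q\mathbf V,\Xi)\stackrel{d}{=}(\mathbf Q,\Xi)$ for all $\mathbf V\in O(\ell)$, the conditional law of $\mathbf Q$ given $\Xi$ is a right-invariant probability measure on a principal homogeneous $O(\ell)$-space, hence Haar.

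Second, for $\mathbb E\|\mathcal R^{-1}\|_{\mathrm{op}}^2<\infty$ the paper cites a sub-Gaussian two-sided singular value bound. That bound controls the typical size of $\sigma_{\min}(\mathbf G)$. It gives no small-ball estimate on $\mathbb P(\sigma_{\min}(\mathbf G)\le\varepsilon)$ as $\varepsilon\to 0$, which is what this moment actually requires. Your inverse-Wishart computation gives the explicit bound $\ell/(d-\ell-1)$. Moreover, since $\sigma_{\min}(\mathbf G)^{-2}\ge \ell^{-1}\operatorname{tr}((\mathbf G^\top\mathbf G)^{-1})$, it shows that $d>\ell+1$ is exactly the right hypothesis.

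The paper's write-up is shorter; yours is self-contained and sharper.
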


\begin{proof}
Fix any $g_x$-orthonormal basis $\{b'_{x,i}\}_{i=1}^{\ell}$ of $B$ and any $g_x$-orthonormal frame $\{b_{x,j}\}_{j=1}^{d}$ of $\mathrm T_x\mathcal M$.
By \cref{rem:gaussian-coordinates}, the matrix representation $\mathbf{\Omega}\in\mathbb R^{d\times \ell}$ of the operator $\mathcal F$ in these coordinates has i.i.d. standard normal entries. We verify the three requirements of \cref{def:haar-grassmann-sketch}.

\noindent(i) For every orthogonal matrix $\mathbf{Q}\in\mathbb R^{d\times d}$, rotational invariance of the standard Gaussian law gives
$
\mathbf{Q}\mathbf{\Omega}\stackrel{d}{=}\mathbf{\Omega}.
$
Let $T_{\mathbf Q}:\mathrm T_x\mathcal M\to\mathrm T_x\mathcal M$ be the $g_x$-isometry represented by $\mathbf Q$ in the frame $\{b_{x,j}\}_{j=1}^d$.
Then the range of $T_{\mathbf Q}\circ \mathcal F$ is $T_{\mathbf Q}(\Xi)$, while the matrix of $T_{\mathbf Q}\circ \mathcal F$ is $\mathbf Q\mathbf\Omega$.
Hence
$
T_{\mathbf Q}(\Xi)\stackrel{d}{=}\Xi.
$
Therefore the law of $\Xi=\operatorname{range}(\mathcal F)$ is invariant under all $g_x$-isometries of $\mathrm T_x\mathcal M$, i.e., $\Xi$ is Haar-uniform on $\mathrm{Gr}(\ell,\mathrm T_x\mathcal M)$.

\noindent(ii) Since $d > \ell$, the Gaussian matrix $\mathbf\Omega$ has full column rank almost surely. Let
$
\mathbf S:=(\mathbf\Omega^{\top}\mathbf\Omega)^{1/2}$,
$
\mathbf W:=\mathbf\Omega\mathbf S^{-1}.
$
Then almost surely $\mathbf S$ is symmetric positive definite, $\mathbf W^{\top}\mathbf W=\mathbf I_{\ell}$, and
$
\mathbf\Omega=\mathbf W\mathbf S
$
is the matrix polar decomposition of $\mathbf\Omega$.
Define operators $\mathcal U:B\to \Xi$ and $\mathcal R:B\to B$  whose matrix representations in the chosen bases are $\mathbf W$ and $\mathbf S$, respectively. Then the factorization $\mathcal F=\mathcal U\mathcal R$ holds.

It remains to verify the conditional Haar property and the independence of $\mathcal U$ and $\mathcal R$.
For every orthogonal matrix $\mathbf O\in\mathbb R^{\ell\times \ell}$, right invariance of the Gaussian law gives
$
\mathbf\Omega\mathbf O\stackrel{d}{=}\mathbf\Omega.
$
Moreover,
$
\operatorname{range}(\mathbf\Omega\mathbf O)=\operatorname{range}(\mathbf\Omega)$,
$
\mathbf S(\mathbf\Omega\mathbf O)=\mathbf O^{\top}\mathbf S(\mathbf\Omega)\mathbf O$,
$
\mathbf W(\mathbf\Omega\mathbf O)=\mathbf W(\mathbf\Omega)\mathbf O$.
Thus, conditionally on the subspace $\Xi=\operatorname{range}(\mathcal F)$, the law of $\mathcal U$ is invariant under right composition by every orthogonal operator on $B$.
Since the set of linear isometries $B\to \Xi$ forms a homogeneous space, denoted by $\mathrm{Iso}(B,\Xi)$, under this right action, this right-invariant probability law is exactly the Haar law on $\mathrm{Iso}(B,\Xi)$.
Finally, for a standard Gaussian matrix, the polar factors $\mathbf W$ and $\mathbf S$ are independent; equivalently, the operator-valued polar factors $\mathcal U$ and $\mathcal R$ are independent.

\noindent(iii) For any $u\in B$, Gaussian sketching gives
$
\mathbb E\big[\langle u,\mathcal F^{*}\mathcal F[u]\rangle_x\big]
=
\mathbb E\big[\|\mathcal F[u]\|_x^2\big]
=d\,\|u\|_x^2.
$
Hence
$
\mathbb E[\mathcal F^{*}\mathcal F]=d\,\mathrm{Id}_B.
$
Since $\mathcal F^{*}\mathcal F=\mathcal R^2$, it follows that
$
\mathbb E[\mathcal R^2]=d\,\mathrm{Id}_B.
$
Also, because the matrix of $\mathcal R$ is $\mathbf S=(\mathbf\Omega^{\top}\mathbf\Omega)^{1/2}$, we have
$
\|\mathcal R^{-1}\|_{\mathrm{op}}=
\|\mathbf S^{-1}\|_2
$.
Therefore the smallest-singular-value tail bound for Gaussian matrices, for example~\cite[Theorem~4.6.1]{Vershynin2018HDP}, yields
$
\mathbb E [ \|\mathcal R^{-1}\|_{\mathrm{op}}^2 ]<\infty
$
whenever $d>\ell+1$.
Thus all requirements of \cref{def:haar-grassmann-sketch} hold.
\end{proof}

We now establish approximation guarantees for the Riemannian Nystr\"om approximation under randomized sketching. Proposition~\ref{prop:manifold-22-routeB} is distribution-agnostic: its conclusion holds without any assumption on the distribution of the sketching operator. We then specialize this general result to two concrete randomized models, namely Gaussian sketching and Haar--Grassmann sketching. Let \(\lambda_1(\mathcal{H}_x)\ge\cdots\ge\lambda_d(\mathcal{H}_x)\ge 0\) denote the eigenvalues of \(\mathcal{H}_x\).

\begin{proposition}
  \label{prop:manifold-22-routeB}
  Let $\ell\ge 4$.
  For any $p\in\{2,\dots,\ell-2\}$,
  there exist two constants $m_{\mathrm{HS}}$ and $m_{\mathrm{op}}$ (depending on $p, \ell$ and $\mathcal{P}_{x, B, \Xi}$) such that
  \[
  \mathbb E\left[\,\big\|\mathcal{H}_x-\mathcal{H}_{x,B,\Xi}\big\|_{\mathrm{op}}\,\right]
  \le
  \Bigl(1+2\,m_{\mathrm{HS}}\Bigr)\,\lambda_{\ell-p+1}(\mathcal{H}_x)
  +2\,m_{\mathrm{op}}\sum_{j>\ell-p}\lambda_j(\mathcal{H}_x).
  \]

\end{proposition}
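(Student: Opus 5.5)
We follow the standard deterministic route for Nystr\"om error bounds (as in Martinsson--Tropp and Persson--Boull\'e--Kressner), carried out intrinsically on $\mathrm T_x\mathcal M$. Throughout, write $\mathcal P:=\mathcal P_{x,B,\Xi}$. By the proof of \cref{lem:riem-21},
\[
\mathcal H_x-\mathcal H_{x,B,\Xi}=\mathcal H_x^{\frac12}(\mathrm{Id}_x-\Pi_Z)\mathcal H_x^{\frac12}, \qquad Z=\mathrm{range}(\mathcal H_x^{\frac12}\mathcal P),
\]
and therefore
\[
\|\mathcal H_x-\mathcal H_{x,B,\Xi}\|_{\mathrm{op}}=\|(\mathrm{Id}_x-\Pi_Z)\mathcal H_x^{\frac12}\|_{\mathrm{op}}^2 .
\]
The error thus reduces to a randomized range-finder error for $\mathcal H_x^{\frac12}$ with test operator $\mathcal P$.

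\textbf{Step 1: spectral splitting.} Let $k:=\ell-p$ and take a $g_x$-orthonormal eigenbasis of $\mathcal H_x$. Split $\mathrm T_x\mathcal M=V_1\oplus V_2$, where $V_1$ is spanned by the top $k$ eigenvectors. Write $\Sigma_1,\Sigma_2$ for the restrictions of $\mathcal H_x^{\frac12}$ to $V_1$ and $V_2$. Define the sketch blocks
\[
\Omega_1:=\Pi_{V_1}\mathcal P|_B:B\to V_1,\qquad \Omega_2:=\Pi_{V_2}\mathcal P|_B:B\to V_2 .
\]

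\textbf{Step 2: deterministic bound.} On the event that $\Omega_1$ has full row rank $k$, I would prove the intrinsic analogue of Halko--Martinsson--Tropp Theorem~9.1:
\[
\|(\mathrm{Id}_x-\Pi_Z)\mathcal H_x^{\frac12}\|_{\mathrm{op}}^2\le \|\Sigma_2\|_{\mathrm{op}}^2+\|\Sigma_2\Omega_2\Omega_1^{\dagger}\|_{\mathrm{op}}^2 .
\]
The argument runs as follows. The range $Z$ contains $\mathrm{range}(\mathcal H_x^{\frac12}\mathcal P\Omega_1^\dagger)$. Comparing $\Pi_Z$ with the projector onto this smaller subspace, and applying the usual perturbation lemma for $\mathrm{Id}-\Pi$ of a block operator $\binom{\mathrm{Id}}{F}$, gives the inequality. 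This is pure Hilbert-space linear algebra, so it transfers verbatim once we work in the decomposition $V_1\oplus V_2$.

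\textbf{Step 3: weakening to HS norms.} Use $\|\Sigma_2\|_{\mathrm{op}}^2=\lambda_{k+1}(\mathcal H_x)=\lambda_{\ell-p+1}(\mathcal H_x)$ and
\[
\|\Sigma_2\Omega_2\Omega_1^\dagger\|_{\mathrm{op}}^2\le \|\Sigma_2\Omega_2\|_{\mathrm{op}}^2\,\|\Omega_1^\dagger\|_{\mathrm{op}}^2 .
\]
To get exactly the stated form, I would instead split the second term as
\[
\|\Sigma_2\Omega_2\Omega_1^\dagger\|^2\le 2\|\Sigma_2\|_{\mathrm{op}}^2\,\|\Omega_2\Omega_1^\dagger\|_{\mathrm{HS}}^2+2\|\Sigma_2\Omega_2\Omega_1^\dagger\|_{\mathrm{HS}}^2 .
\]
The last HS term is bounded by $\sum_{j>k}\lambda_j(\mathcal H_x)\,\|\Omega_2\Omega_1^\dagger\|_{\mathrm{op}}^2$-type quantities, using $\|\Sigma_2 X\|_{\mathrm{HS}}^2=\mathrm{tr}(\Sigma_2^2XX^*)\le \mathrm{tr}(\Sigma_2^2)\|X\|_{\mathrm{op}}^2$.

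\textbf{Step 4: expectations.} Take expectations and define
\[
m_{\mathrm{HS}}:=\mathbb E\|\Omega_2\Omega_1^\dagger\|_{\mathrm{HS}}^2,\qquad m_{\mathrm{op}}:=\mathbb E\|\Omega_2\Omega_1^\dagger\|_{\mathrm{op}}^2 .
\]
Both depend only on $p$, $\ell$ and the law of $\mathcal P$, which is what the statement allows. If the rank condition fails, or these quantities are infinite, the bound holds trivially with $m=\infty$. This yields
\[
\mathbb E\|\mathcal H_x-\mathcal H_{x,B,\Xi}\|_{\mathrm{op}}\le (1+2m_{\mathrm{HS}})\lambda_{\ell-p+1}+2m_{\mathrm{op}}\sum_{j>\ell-p}\lambda_j .
\]

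\textbf{Main obstacle.} The hard part is Step 2: making the deterministic range-finder bound rigorous in the coordinate-free setting. The dimensions are mismatched ($\Omega_1:B\to V_1$ with $\dim B=\ell>k$), so one must work with the right pseudoinverse $\Omega_1^\dagger$ and check that $\mathcal P\Omega_1^\dagger$ is well defined as a map $V_1\to\mathrm T_x\mathcal M$. I would settle this first by fixing orthonormal bases, reducing to the matrix statement, and noting that every quantity involved is basis-invariant. Some care is also needed so that the factor-2 splitting in Step 3 lands on exactly the stated constants.
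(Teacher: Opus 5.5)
Your argument is correct and follows the same skeleton as the paper. Both use the reduction $\|\mathcal H_x-\mathcal H_{x,B,\Xi}\|_{\mathrm{op}}=\|(\mathrm{Id}_x-\Pi_Z)\mathcal H_x^{1/2}\|_{\mathrm{op}}^2$, the spectral split at $k=\ell-p$, and the deterministic range-finder bound $\|\Sigma_2\|_{\mathrm{op}}^2+\|\Sigma_2\Omega_2\Omega_1^\dagger\|_{\mathrm{op}}^2$. Your $\Omega_1,\Omega_2$ are the paper's $\mathcal P_1,\mathcal P_2$ restricted to $B$. The paper does not re-derive the range-finder bound; it quotes it from \cite{Frangella2023}. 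So your ``main obstacle'' is resolved by exactly what you propose: fix orthonormal bases and invoke the matrix result.

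The real difference is the last step. The paper sets $m_{\mathrm{HS}}=\mathbb E\|\mathcal P_1^\dagger\|_{\mathrm{HS}}^2$ and $m_{\mathrm{op}}=\mathbb E\|\mathcal P_1^\dagger\|_{\mathrm{op}}^2$. It justifies this via $\|\Sigma_2\mathcal P_2\mathcal P_1^\dagger\|_{\mathrm{op}}\le\|\Sigma_2\|_{\mathrm{op}}\|\mathcal P_1^\dagger\|_{\mathrm{HS}}$ and $\|\Sigma_2\mathcal P_2\mathcal P_1^\dagger\|_{\mathrm{op}}\le\|\Sigma_2\|_{\mathrm{HS}}\|\mathcal P_1^\dagger\|_{\mathrm{op}}$. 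These inequalities silently drop the factor $\mathcal P_2$, and they fail for a general sketch. Replacing $\mathcal F$ by $c\mathcal F$ leaves $\mathcal H_{x,B,\Xi}$ and $\Sigma_2\mathcal P_2\mathcal P_1^\dagger$ unchanged but multiplies $\mathcal P_1^\dagger$ by $c^{-1}$, so for large $c$ the right-hand sides go to zero. The paper's form, with its factors $2$, is really the Gaussian estimate: condition on $\Omega_1$ and average over the independent Gaussian block $\Omega_2$. That step is unavailable for arbitrary sketches, and for Haar--Grassmann sketches, where $\Pi_1\mathcal U$ and $\Pi_2\mathcal U$ are dependent.

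You keep $\Omega_2$ inside the constants. Your inequalities follow deterministically from submultiplicativity and $\|\Sigma_2X\|_{\mathrm{HS}}^2\le\|\Sigma_2\|_{\mathrm{HS}}^2\|X\|_{\mathrm{op}}^2$. So you do prove the proposition as literally stated, for any sketch with $\Omega_1$ of full row rank.

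The price is that your $m_{\mathrm{HS}},m_{\mathrm{op}}$ are not the moments bounded in \cref{cor:manifold-22-gaussian} and \cref{prop:haar-grassmann-pinv-moments}, and they are larger by a factor of order $d$. Under Gaussian sketching, $\mathbb E\|\Omega_2\Omega_1^\dagger\|_{\mathrm{HS}}^2=(d-k)\,k/(p-1)$, whereas $\mathbb E\|\Omega_1^\dagger\|_{\mathrm{HS}}^2=k/(p-1)$. Hence the sharp downstream bounds do not follow from your version without the conditional Gaussian averaging. For Haar--Grassmann sketching there is a shortcut: $\mathcal H_{x,B,\Xi}=\mathcal H_x^{1/2}\Pi_Z\mathcal H_x^{1/2}$ with $Z=\mathcal H_x^{1/2}(\Xi)$ depends only on $\Xi$. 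The law of $\Xi$ is the same as under Gaussian sketching, so the Gaussian bound transfers directly.

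Two smaller points.
\begin{itemize}
\item The split in Step 3 is redundant. Either $\|\Sigma_2\Omega_2\Omega_1^\dagger\|_{\mathrm{op}}^2\le\|\Sigma_2\|_{\mathrm{op}}^2\|\Omega_2\Omega_1^\dagger\|_{\mathrm{HS}}^2$ or $\|\Sigma_2\Omega_2\Omega_1^\dagger\|_{\mathrm{op}}^2\le\|\Sigma_2\|_{\mathrm{HS}}^2\|\Omega_2\Omega_1^\dagger\|_{\mathrm{op}}^2$ alone already yields the stated form, factors $2$ included.
\item Your escape ``$m=\infty$'' on the rank-deficient event fails when $\lambda_{\ell-p+1}(\mathcal H_x)=0$, because the right-hand side is then $0$. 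A counterexample is a deterministic sketch with $\Xi\subseteq\ker\mathcal H_x$ and $\mathcal H_x\neq0$: it has error $\lambda_1(\mathcal H_x)>0$. You therefore need $\Omega_1$ to have full row rank almost surely. The paper uses this assumption implicitly too, and it holds under both randomized models.
\end{itemize}
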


\begin{proof}
Write $\mathcal P:=\mathcal P_{x,B,\Xi}$ for abbreviation and let $\mathcal Z:=\mathcal H_x^{\frac{1}{2}}\mathcal P$.
  Let $\Pi_Z:=\mathcal Z(\mathcal Z^{*}\mathcal Z)^{\dagger}\mathcal Z^{*}$ be the $g_x$-orthogonal projection onto $\operatorname{range}(\mathcal Z)$.
  As shown in \cref{lem:riem-21}, it holds that
  \[
  \|\mathcal H_x-\mathcal H_{x,B,\Xi}\|_{\mathrm{op}}
  =\big\|\mathcal H_x^{\frac{1}{2}}(\mathrm{Id}_x-\Pi_Z)\mathcal H_x^{\frac{1}{2}}\big\|_{\mathrm{op}}
  =\big\|(\mathrm{Id}_x-\Pi_Z)\mathcal H_x^{\frac{1}{2}}\big\|_{\mathrm{op}}^{2}.
  \]
  Fix $p\in\{2,\dots,\ell-2\}$ and set $r:=\ell-p$.
  Let $\Pi_1$ denote the $g_x$-orthogonal projection onto the invariant subspace of $\mathcal H_x$ associated with the largest $r$ eigenvalues, and set $\Pi_2:=\mathrm{Id}_x-\Pi_1$.
  Define $\Sigma_2:=\mathcal H_x^{\frac{1}{2}}\Pi_2$ and the decomposed sketching maps $\mathcal P_1:=\Pi_1 \mathcal P_{x,B,\Xi}$ and $\mathcal P_2:=\Pi_2 \mathcal P_{x,B,\Xi}$.
It follows from the spectral theorem that
  \[
  \|\Sigma_2\|_{\mathrm{op}}^{2}=\lambda_{r+1}(\mathcal H_x)=\lambda_{\ell-p+1}(\mathcal H_x),
  \qquad
  \|\Sigma_2\|_{\mathrm{HS}}^{2}=\sum_{j>r}\lambda_j(\mathcal H_x)=\sum_{j>\ell-p}\lambda_j(\mathcal H_x).
  \]
  A standard Nystr\"om estimate (see \cite[Proposition~2.2]{Frangella2023} and its proof) yields
  $
  \big\|(\mathrm{Id}_x-\Pi_Z)\mathcal H_x^{\frac{1}{2}}\big\|_{\mathrm{op}}^{2}
  \le
  \|\Sigma_2\|_{\mathrm{op}}^{2}
  +\big\|\Sigma_2\,\mathcal P_2\,\mathcal P_1^{\dagger}\big\|_{\mathrm{op}}^{2}
  $.
  Taking expectations and using 
  $
  \|\Sigma_2\,\mathcal P_2\,\mathcal P_1^{\dagger}\|_{\mathrm{op}}
  \le \|\Sigma_2\|_{\mathrm{op}}\,\|\mathcal P_1^{\dagger}\|_{\mathrm{HS}},
  \|\Sigma_2\,\mathcal P_2\,\mathcal P_1^{\dagger}\|_{\mathrm{op}}
  \le \|\Sigma_2\|_{\mathrm{HS}}\,\|\mathcal P_1^{\dagger}\|_{\mathrm{op}},
$
  we obtain
  \[
  \mathbb E [ \|\mathcal H_x-\mathcal H_{x,B,\Xi}\|_{\mathrm{op}} ]
  \le
  \lambda_{\ell-p+1}(\mathcal H_x)
  +2\,\|\Sigma_2\|_{\mathrm{op}}^{2}\,\mathbb E [ \|\mathcal P_1^{\dagger}\|_{\mathrm{HS}}^{2} ]
  +2\,\|\Sigma_2\|_{\mathrm{HS}}^{2}\,\mathbb E [ \|\mathcal P_1^{\dagger}\|_{\mathrm{op}}^{2} ].
  \]
  Finally, define
  $m_{\mathrm{HS}}:=\mathbb E [ \|\mathcal P_1^{\dagger}\|_{\mathrm{HS}}^{2} ]$
  and
  $m_{\mathrm{op}}:=\mathbb E [ \|\mathcal P_1^{\dagger}\|_{\mathrm{op}}^{2} ]$.
  Substituting these definitions together with the identities for $\|\Sigma_2\|_{\mathrm{op}}^{2}$ and $\|\Sigma_2\|_{\mathrm{HS}}^{2}$ into the preceding estimate yields the stated bound.
  \end{proof}

\Cref{prop:manifold-22-routeB} shows that the only quantities in the approximation analysis that depend on the distribution of the sketching are the two pseudoinverse moments $m_{\mathrm{HS}}$ and $m_{\mathrm{op}}$.
Under Gaussian sketching, these moments admit explicit bounds (Appendix~B.0.1 in~\cite{Frangella2023}), and substituting them into \cref{prop:manifold-22-routeB} yields the explicit Gaussian error bound stated in \cref{cor:manifold-22-gaussian}. The result matches the Euclidean Nystr\"om approximation error under Gaussian sketching.

\begin{corollary}
  \label{cor:manifold-22-gaussian}
  Let $\ell\ge 4$.
  Suppose $\mathcal{P}_{x, B, \Xi}$ satisfies the Gaussian sketching condition.
  Then it holds that
  \[
  \mathbb E\left[ \big\|\mathcal{H}_x-\mathcal{H}_{x, B, \Xi}\big\|_{\mathrm{op}} \right]
  \le
  \min_{2\le p\le \ell-2}
  \left\{
  \Big(1+\tfrac{2(\ell-p)}{p-1}\Big)\lambda_{\ell-p+1}(\mathcal{H}_x)
  +\tfrac{2\mathrm e^2\,\ell}{p^{2}-1}\sum_{j>\ell-p}\lambda_j(\mathcal{H}_x)
  \right\}.
  \]
\end{corollary}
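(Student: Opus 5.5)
The plan is to specialize \cref{prop:manifold-22-routeB} to Gaussian sketching. That reduces the claim to bounding the two moments $m_{\mathrm{HS}}=\mathbb E[\|\mathcal P_1^{\dagger}\|_{\mathrm{HS}}^2]$ and $m_{\mathrm{op}}=\mathbb E[\|\mathcal P_1^{\dagger}\|_{\mathrm{op}}^2]$, then minimizing over $p$. Since the bound in \cref{prop:manifold-22-routeB} holds for each fixed $p\in\{2,\dots,\ell-2\}$ and the left-hand side does not depend on $p$, taking the minimum at the end is free.

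First I will identify $\mathcal P_1=\Pi_1\mathcal F\Pi_B$ with a Gaussian matrix. Choose the $g_x$-orthonormal frame $\{b_{x,j}\}_{j=1}^d$ to consist of eigenvectors of $\mathcal H_x$ ordered by decreasing eigenvalue, so that $\Pi_1$ projects onto the span of the first $r=\ell-p$ frame vectors. Choose also an orthonormal basis of $B$. By \cref{rem:gaussian-coordinates}, the coefficient matrix $\mathbf\Omega\in\mathbb R^{d\times\ell}$ has i.i.d.\ $\mathcal N(0,1)$ entries. In these bases, the nonzero block of $\mathcal P_1$ is $\mathbf\Omega_1$, the first $r$ rows of $\mathbf\Omega$, which is an $r\times\ell$ standard Gaussian matrix. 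Because the bases are orthonormal and $\Pi_1,\Pi_B$ are orthogonal projections, $\|\mathcal P_1^{\dagger}\|_{\mathrm{HS}}=\|\mathbf\Omega_1^{\dagger}\|_F$ and $\|\mathcal P_1^{\dagger}\|_{\mathrm{op}}=\|\mathbf\Omega_1^{\dagger}\|_2$. Here I must check that the Moore--Penrose pseudoinverse of the composite operator, which is zero off $\Pi_1(\mathrm T_x\mathcal M)$ and off $B$, corresponds to the matrix pseudoinverse of $\mathbf\Omega_1$. This holds because pseudoinverses commute with isometric changes of coordinates.

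Next I will invoke the standard Gaussian moment bounds for an $r\times(r+p)$ Gaussian matrix, taken from Appendix~B.0.1 in \cite{Frangella2023} (originally Halko--Martinsson--Tropp). The first is $\mathbb E\|\mathbf\Omega_1^{\dagger}\|_F^2=\frac{r}{p-1}$, valid for $p\ge2$. The second is $\mathbb E\|\mathbf\Omega_1^{\dagger}\|_2^2\le \frac{\mathrm e^2\ell}{p^2-1}$, which uses $r+p=\ell$ and needs $p\ge2$; the $p^2-1$ form comes from the second-moment tail bound of the smallest singular value. This gives $m_{\mathrm{HS}}= \frac{\ell-p}{p-1}$ and $m_{\mathrm{op}}\le \frac{\mathrm e^2\ell}{p^2-1}$.

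Finally, I will substitute these into \cref{prop:manifold-22-routeB}, which yields
$\bigl(1+\tfrac{2(\ell-p)}{p-1}\bigr)\lambda_{\ell-p+1}(\mathcal H_x)+\tfrac{2\mathrm e^2\ell}{p^2-1}\sum_{j>\ell-p}\lambda_j(\mathcal H_x)$, and then take the minimum over $p$. The only delicate point is the exact form of the operator-norm moment bound as quoted. If the cited appendix states it as $\frac{\mathrm e^2(r+p)}{p^2-1}$ or in a slightly different normalization, I would re-derive it by integrating the tail bound $\mathbb P(\|\mathbf\Omega_1^{\dagger}\|_2> t)\le C\,t^{-(p+1)}$. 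Everything else is a direct transcription.
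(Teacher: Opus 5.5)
Your proposal is correct and follows the paper's proof. Like the paper, you specialize \cref{prop:manifold-22-routeB} and substitute the Gaussian pseudoinverse moments $\mathbb E\|\mathcal P_1^{\dagger}\|_{\mathrm{HS}}^2=\frac{\ell-p}{p-1}$ and $\mathbb E\|\mathcal P_1^{\dagger}\|_{\mathrm{op}}^2\le \frac{\mathrm e^2\ell}{p^2-1}$ from \cite{Frangella2023}. You also realize $\mathcal P_1$ as an $r\times\ell$ standard Gaussian block in an eigenbasis frame, which \cref{rem:gaussian-coordinates} permits because the condition holds in every orthonormal frame; this fills in a detail the paper leaves implicit.
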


\begin{proof}
Applying \cref{prop:manifold-22-routeB} and leveraging the Gaussian pseudoinverse moment bounds (Appendix~B.0.1 in~\cite{Frangella2023})
$
\mathbb E\big[\|\mathcal P_1^{\dagger}\|_{\mathrm{HS}}^{2}\big]=\frac{\ell-p}{p-1}
$,
$
\mathbb E\big[\|\mathcal P_1^{\dagger}\|_{\mathrm{op}}^{2}\big]\le \mathrm e^{2}\,\frac{\ell}{p^{2}-1}
$
yields the claimed bound.
\end{proof}

Under the Haar--Grassmann sketching condition, the polar factorization $\mathcal F=\mathcal U\mathcal R$ separates the randomness in the range geometry from that in the radial scaling. When $\mathcal R$ is deterministic, or more generally when suitable inverse-moment bounds for $\mathcal R$ are available, the resulting approximation bounds have the same order as in the Gaussian case. The next technical lemma shows that the Hilbert--Schmidt moment of $\mathcal A$ can be bounded solely in terms of $p$ and $\ell$. We then use this estimate to control the pseudoinverse moments $m_{\mathrm{HS}}$ and $m_{\mathrm{op}}$ appearing in \cref{prop:manifold-22-routeB}; see \cref{prop:haar-grassmann-pinv-moments}.

\begin{lemma}
  \label{lem:haar-grassmann-A-moments}
  Suppose $\mathcal{P}_{x, B, \Xi}$ satisfies the Haar--Grassmann sketching condition, and let $\Pi_1$ be defined as in the proof of \cref{prop:manifold-22-routeB}.
  Define
  $
  \mathcal A:=\Pi_1\mathcal U:B\to \operatorname{range}(\Pi_1).
  $
  Then $\mathcal A$ has full row rank almost surely. Moreover, it holds that
  \begin{equation}
    \label{eq:A-hs-moment-new}
    \mathbb E\big[\|\mathcal A^{\dagger}\|_{\mathrm{HS}}^{2}\big]
    \le
    d\,\frac{\ell-p}{p-1},
  \end{equation}
  and there exists a universal constant $C_0>0$ such that
  \begin{equation}
    \label{eq:A-op-moment-new}
    \mathbb E\big[\|\mathcal A^{\dagger}\|_{\mathrm{op}}^{2}\big]
    \le
    C_0\,d\,\frac{\ell}{p^{2}-1}.
  \end{equation}
\end{lemma}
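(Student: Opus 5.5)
The plan is to reduce everything to the Gaussian case, where the pseudoinverse moments are already known (Appendix~B.0.1 of~\cite{Frangella2023}, as used in \cref{cor:manifold-22-gaussian}). The reduction uses the polar structure and a Jensen argument. Only conditions \ref{itm:haar-grassmann-sketch-haar} and \ref{itm:haar-grassmann-sketch-polar} of \cref{def:haar-grassmann-sketch} matter, because $\mathcal A=\Pi_1\mathcal U$ does not involve $\mathcal R$.

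\emph{Step 1: law of $\mathcal U$.} Combining \ref{itm:haar-grassmann-sketch-haar} and \ref{itm:haar-grassmann-sketch-polar}, $\mathcal U$, viewed as an isometry $B\to\mathrm T_x\mathcal M$, is uniformly distributed on $\mathrm{Iso}(B,\cdot)$. Indeed, its law is invariant under left composition with any $g_x$-isometry $T$ of $\mathrm T_x\mathcal M$: $T$ maps the Haar law of $\Xi$ to itself and maps the Haar law on $\mathrm{Iso}(B,\Xi)$ onto the Haar law on $\mathrm{Iso}(B,T\Xi)$. Now fix a $g_x$-orthonormal eigenframe of $\mathcal H_x$ whose first $r=\ell-p$ vectors span $\operatorname{range}(\Pi_1)$, and an orthonormal basis of $B$. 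In these coordinates $\mathcal U$ is a matrix $\mathbf W\in\mathbb R^{d\times\ell}$ that is uniform on the Stiefel manifold, and $\mathcal A$ is the top $r\times\ell$ block $\mathbf W_1$. Since all bases are orthonormal, $\|\mathcal A^\dagger\|_{\mathrm{HS}}=\|\mathbf W_1^\dagger\|_F$ and $\|\mathcal A^\dagger\|_{\mathrm{op}}=\|\mathbf W_1^\dagger\|_2$.

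\emph{Step 2: Gaussian coupling.} Let $\mathbf\Omega\in\mathbb R^{d\times\ell}$ be standard Gaussian, with polar decomposition $\mathbf\Omega=\mathbf W\mathbf S$. As in the proof of \cref{prop:gaussian-implies-haar-grassmann}, $\mathbf W$ is Haar on the Stiefel manifold, $\mathbf W$ and $\mathbf S$ are independent, and $\mathbb E[\mathbf S^2]=\mathbb E[\mathbf\Omega^\top\mathbf\Omega]=d\,\mathbf I_\ell$. So we may realize $\mathbf W_1$ through this coupling, which gives $\mathbf\Omega_1=\mathbf W_1\mathbf S$. Since $\mathbf\Omega_1$ (an $r\times\ell$ Gaussian matrix with $r<\ell$) has full row rank almost surely and $\mathbf S$ is invertible almost surely, $\mathbf W_1$ has full row rank almost surely. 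This is the first claim.

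\emph{Step 3: Jensen.} For full-row-rank $\mathbf W_1$ we have $\|\mathbf W_1^\dagger\|_F^2=\operatorname{tr}\big((\mathbf W_1\mathbf W_1^\top)^{-1}\big)$ and $\|\mathbf W_1^\dagger\|_2^2=\|(\mathbf W_1\mathbf W_1^\top)^{-1}\|_2$. Similarly, $\|\mathbf\Omega_1^\dagger\|^2$ in either norm is the same functional evaluated at $\mathbf\Omega_1\mathbf\Omega_1^\top=\mathbf W_1\mathbf S^2\mathbf W_1^\top$. Both maps $X\mapsto\operatorname{tr}(X^{-1})$ and $X\mapsto\|X^{-1}\|_2=\lambda_{\min}(X)^{-1}$ are convex on positive definite matrices. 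Conditioning on $\mathbf W_1$ and using the independence of $\mathbf S$, Jensen's inequality gives
\[
\mathbb E\big[\phi(\mathbf W_1\mathbf S^2\mathbf W_1^\top)\,\big|\,\mathbf W_1\big]\ \ge\ \phi\big(d\,\mathbf W_1\mathbf W_1^\top\big)=\tfrac1d\,\phi(\mathbf W_1\mathbf W_1^\top).
\]
Taking expectations yields $\mathbb E\|\mathcal A^\dagger\|^2\le d\,\mathbb E\|\mathbf\Omega_1^\dagger\|^2$ in both norms. The Gaussian moments $\mathbb E\|\mathbf\Omega_1^\dagger\|_F^2=\frac{\ell-p}{p-1}$ and $\mathbb E\|\mathbf\Omega_1^\dagger\|_2^2\le \mathrm e^2\frac{\ell}{p^2-1}$ then give \eqref{eq:A-hs-moment-new} and \eqref{eq:A-op-moment-new} with $C_0=\mathrm e^2$.

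The main obstacle is Step~1: rigorously identifying the joint law of $\mathcal U$ as the Haar law on the Stiefel manifold from the two-stage conditional description. This requires a disintegration/uniqueness argument for invariant measures on the homogeneous space $\mathrm{Iso}(B,\mathrm T_x\mathcal M)$. The Jensen step needs only the integrability of $\mathbf\Omega_1^\dagger$, which holds because $p\ge2$.
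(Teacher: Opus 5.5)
Your proof is correct, and it reaches the bounds by a different route from the paper's. Both proofs start from the same coordinate reduction. Your Step 1 makes explicit the Haar-on-Stiefel law of $\mathcal U$, which the paper uses tacitly when it writes ``equivalently, let $\mathbf X,\mathbf Y$ be \dots''. Your left-invariance-plus-uniqueness sketch is already a complete argument for this, not an obstacle.

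After that, the two proofs diverge:
\begin{itemize}
\item The paper writes $(\mathbf A\mathbf A^\top)^{-1}$ through two independent Wishart matrices as $\mathbf I_r+\mathbf W_1^{-1/2}\mathbf W_2\mathbf W_1^{-1/2}$. It computes the Hilbert--Schmidt moment exactly, $r(d-r-1)/(p-1)$, from $\mathbb E[\mathbf W_1^{-1}]=\mathbf I_r/(p-1)$. It bounds the operator moment by $1+\mathbb E\|\mathbf X^\dagger\|^2\,\mathbb E\|\mathbf Y\|^2$, which needs the extra estimate $\mathbb E\|\mathbf Y\|^2\le 3d$ and ends with $C_0=3\mathrm e^2+1$.
\item You couple the Haar factor to a Gaussian through the polar decomposition. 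You then use that $X\mapsto\operatorname{tr}X^{-1}$ and $X\mapsto\lambda_{\min}(X)^{-1}$ are convex and homogeneous of degree $-1$, and apply Jensen conditionally on $\mathbf W$.
\end{itemize}

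What your route buys:
\begin{itemize}
\item It transfers the Gaussian pseudoinverse moments directly, with no Wishart or matrix-beta representation and no bound on $\|\mathbf Y\|$.
\item It gives the smaller constant $C_0=\mathrm e^2$.
\item It shows structurally that the isometric choice $\mathcal R\equiv\sqrt d\,\mathrm{Id}_B$ is never worse than Gaussian sketching in these two moments, since $\mathbb E\|(\sqrt d\,\mathbf W_1)^\dagger\|^2\le\mathbb E\|\mathbf\Omega_1^\dagger\|^2$.
\end{itemize}
The paper's route buys an exact Hilbert--Schmidt value, which is marginally sharper than your $d(\ell-p)/(p-1)$; both meet the stated bound.

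One small clarification: the integrability that conditional Jensen actually uses is that of $\mathbf S^2$, i.e.\ $\mathbb E[\mathbf S^2]=d\,\mathbf I_\ell<\infty$. Finiteness of the Gaussian moments is only what makes the resulting bound nontrivial.
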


\begin{proof}
  Set $r:=\ell-p$.
  Choose arbitrary $g_x$-orthonormal isomorphisms
  $
  J_B:B\to \mathbb R^\ell,
  $
  $
  J_1:\operatorname{range}(\Pi_1)\to \mathbb R^r,
  $
  and let
  $
  \mathbf{A}:=J_1\mathcal A J_B^{-1}\in \mathbb R^{r\times \ell}
  $
  be the matrix representation of $\mathcal A$ in these coordinates.
  Since $J_B$ and $J_1$ are orthogonal, $\mathbf{A}$ and $\mathcal A$ have the same singular values. In particular,
  $
  \|\mathcal A^{\dagger}\|_{\mathrm{op}}=\|\mathbf{A}^{\dagger}\|_{\mathrm{op}},
  $
  $
  \|\mathcal A^{\dagger}\|_{\mathrm{HS}}=\|\mathbf{A}^{\dagger}\|_{\mathrm{HS}}.
  $
  Thus it suffices to estimate the corresponding moments for $\mathbf{A}$.
  Equivalently, let
  $
  \mathbf{X}\in\mathbb R^{r\times \ell}
  $
  and
  $
  \mathbf{Y}\in\mathbb R^{r\times (d-\ell)}
  $
  be independent standard Gaussian matrices, and define
  $
  \mathbf{W}_1:=\mathbf{X}\mathbf{X}^\top,
  \mathbf{W}_2:=\mathbf{Y}\mathbf{Y}^\top.
  $
  Then
  $
  \mathbf{A}\mathbf{A}^\top
  \stackrel{d}{=}
  (\mathbf{W}_1+\mathbf{W}_2)^{-1/2}\,\mathbf{W}_1\,(\mathbf{W}_1+\mathbf{W}_2)^{-1/2}.
  $
  In particular,
  $
  (\mathbf{A}\mathbf{A}^\top)^{-1}
  \stackrel{d}{=}
  \mathbf{I}_r+\mathbf{W}_1^{-1/2}\mathbf{W}_2\mathbf{W}_1^{-1/2}.
  $

  We first compute the Hilbert--Schmidt moment.
  Since $\mathbf{W}_1\sim \mathrm{Wishart}_r(\ell,\mathbf{I}_r)$ and $\mathbf{W}_2\sim \mathrm{Wishart}_r(d-\ell,\mathbf{I}_r)$ are independent, and $\ell-r=p\ge 2$, we have
  \[
  \mathbb E[\mathbf{W}_1^{-1}]
  =
  \frac{1}{\ell-r-1}\,\mathbf{I}_r
  =
  \frac{1}{p-1}\,\mathbf{I}_r,
  \qquad
  \mathbb E[\mathbf{W}_2]
  =
  (d-\ell)\,\mathbf{I}_r.
  \]
  Therefore
  $
  \mathbb E \big[\|\mathbf{A}^{\dagger}\|_{\mathrm{HS}}^{2}\big]
  =
  r+\operatorname{tr}\!\big(\mathbb E[\mathbf{W}_1^{-1}]\,\mathbb E[\mathbf{W}_2]\big)
  =
  r+\frac{r(d-\ell)}{p-1}
  \le
  d\,\frac{\ell-p}{p-1}.
  $
  This proves \eqref{eq:A-hs-moment-new}.

  For the operator norm, the same representation gives
  \[
  \|\mathbf{A}^{\dagger}\|_{\mathrm{op}}^{2}
  =
  \lambda_{\max}\!\big((\mathbf{A}\mathbf{A}^\top)^{-1}\big)
  \le
  1+\|\mathbf{W}_1^{-1/2}\mathbf{W}_2\mathbf{W}_1^{-1/2}\|_{\mathrm{op}}
  \le
  1+\|\mathbf{W}_1^{-1}\|_{\mathrm{op}}\|\mathbf{W}_2\|_{\mathrm{op}}.
  \]
  Since $\mathbf{W}_1=\mathbf{X}\mathbf{X}^\top$ and $\mathbf{W}_2=\mathbf{Y}\mathbf{Y}^\top$, this becomes
  $
  \|\mathbf{A}^{\dagger}\|_{\mathrm{op}}^{2}
  \le
  1+\|\mathbf{X}^{\dagger}\|_{\mathrm{op}}^{2}\,\|\mathbf{Y}\|_{\mathrm{op}}^{2}.
  $
  Taking expectations and using the independence of $\mathbf{X}$ and $\mathbf{Y}$ yields
  $
  \mathbb E\big[\|\mathbf{A}^{\dagger}\|_{\mathrm{op}}^{2}\big]
  \le
  1+\mathbb E\big[\|\mathbf{X}^{\dagger}\|_{\mathrm{op}}^{2}\big]\,
  \mathbb E\big[\|\mathbf{Y}\|_{\mathrm{op}}^{2}\big].
  $
  Now $\mathbf{X}$ is an $r\times \ell$ standard Gaussian matrix with oversampling gap $\ell-r=p$, so the Gaussian pseudoinverse estimate from Appendix~B.0.1 in~\cite{Frangella2023} gives
  $
  \mathbb E\big[\|\mathbf{X}^{\dagger}\|_{\mathrm{op}}^{2}\big]
  \le
  \mathrm e^{2}\,\frac{\ell}{p^{2}-1}.
  $
  Also, the standard Gaussian spectral norm estimate together with Gaussian concentration yields
  $
  \mathbb E\big[\|\mathbf{Y}\|_{\mathrm{op}}^{2}\big]
  \le
  \bigl(\sqrt r+\sqrt{d-\ell}\bigr)^{2}+1
  \le
  2r+2(d-\ell)+1
  \le
  3d.
  $
  Consequently,
  $
  \mathbb E\big[\|\mathbf{A}^{\dagger}\|_{\mathrm{op}}^{2}\big]
  \le
  1+3\mathrm e^{2}\,d\,\frac{\ell}{p^{2}-1}.
  $
  Since $d\ge \ell\ge 4$ and $p\ge 2$, we have
  $
  d\,\frac{\ell}{p^{2}-1}\ge \frac{16}{3}\ge 1.
  $
  Hence the additive constant can be absorbed into the same scale, and thus
  \[
  \mathbb E\big[\|\mathbf{A}^{\dagger}\|_{\mathrm{op}}^{2}\big]
  \le
  (3\mathrm e^{2}+1)\,d\,\frac{\ell}{p^{2}-1}.
  \]
  Since $\mathbf{A}$ and $\mathcal A$ have the same singular values, the same estimate holds for $\mathcal A$.
\end{proof}

\begin{proposition}
  \label{prop:haar-grassmann-pinv-moments}
  Let $\ell\ge 4$.
  Fix any $p\in\{2,\dots,\ell-2\}$.
  Suppose $\mathcal{P}_{x, B, \Xi}$ satisfies the Haar--Grassmann sketching condition.
  Let $\mathcal P,\Pi_1,\mathcal P_1$ be defined as in the proof of \cref{prop:manifold-22-routeB}.
  Define the inverse-moment quantities
  \begin{equation}
    \label{eq:haar-hs-moment}
    \rho_{\mathrm{HS}}
    :=
    \frac{d}{\ell}\,\mathbb E\big[\operatorname{tr}(\mathcal R^{-2})\big],
    \qquad
    \rho_{\mathrm{op}}
    :=
    d\,\mathbb E\big[\|\mathcal R^{-1}\|_{\mathrm{op}}^{2}\big].
  \end{equation}
  Then $\rho_{\mathrm{HS}},\rho_{\mathrm{op}}<\infty$, and there exists a universal constant $C_0>0$, independent of $x,d,\ell,p$, such that
  \[
  \mathbb E\big[\|\mathcal P_1^{\dagger}\|_{\mathrm{HS}}^{2}\big]
  \le
  \rho_{\mathrm{HS}}\,\frac{\ell-p}{p-1},
  \qquad
  \mathbb E\big[\|\mathcal P_1^{\dagger}\|_{\mathrm{op}}^{2}\big]
  \le
  C_0\,\rho_{\mathrm{op}}\,\frac{\ell}{p^{2}-1}.
  \]
  In particular, in the isometric special case $\mathcal R\equiv \sqrt d\,\mathrm{Id}_B$, one has $\rho_{\mathrm{HS}}=\rho_{\mathrm{op}}=1$.
\end{proposition}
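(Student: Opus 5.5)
We write $\mathcal P_1=\Pi_1\mathcal P=\Pi_1\mathcal F\Pi_B=\Pi_1\mathcal U\mathcal R\Pi_B$. Viewing $\mathcal P_1$ as a map from $B$ to $\operatorname{range}(\Pi_1)$, it equals $\mathcal A\mathcal R$ with $\mathcal A=\Pi_1\mathcal U$ as in \cref{lem:haar-grassmann-A-moments}. The factor $\Pi_B$ is a coisometry onto $B$, so it does not change singular values or pseudoinverse norms. \Cref{lem:haar-grassmann-A-moments} gives that $\mathcal A$ has full row rank $r=\ell-p$ almost surely, and $\mathcal R$ is invertible almost surely by \cref{def:haar-grassmann-sketch}\ref{itm:haar-grassmann-sketch-polar}.

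For a full-row-rank $\mathcal A$ and invertible $\mathcal R$, the operator $\mathcal R^{-1}\mathcal A^{\dagger}$ is a right inverse of $\mathcal A\mathcal R$. The pseudoinverse is the minimum-norm right inverse in both the HS and the operator norm. This holds because any right inverse $\mathcal X$ satisfies $\mathcal X=(\mathcal A\mathcal R)^{\dagger}+(\mathrm{Id}-\Pi)\mathcal X$ with orthogonal ranges. Hence
\[
\|\mathcal P_1^{\dagger}\|_{\mathrm{op}}\le\|\mathcal R^{-1}\|_{\mathrm{op}}\|\mathcal A^{\dagger}\|_{\mathrm{op}},\qquad
\|\mathcal P_1^{\dagger}\|_{\mathrm{HS}}^2\le\|\mathcal R^{-1}\mathcal A^{\dagger}\|_{\mathrm{HS}}^2=\operatorname{tr}\big(\mathcal R^{-2}\,\mathcal A^{\dagger}\mathcal A^{\dagger *}\big).
\]

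\textbf{Operator norm.} Independence of $\mathcal U$ and $\mathcal R$ from \ref{itm:haar-grassmann-sketch-polar}, combined with the fact that the law of $\mathcal A$ depends only on the Haar law of $\Xi$ and $\mathcal U$, lets us factor the expectation:
\[
\mathbb E\|\mathcal P_1^{\dagger}\|_{\mathrm{op}}^2\le\mathbb E\|\mathcal R^{-1}\|_{\mathrm{op}}^2\,\mathbb E\|\mathcal A^{\dagger}\|_{\mathrm{op}}^2.
\]
By \eqref{eq:A-op-moment-new}, the right-hand side is at most $C_0\,\rho_{\mathrm{op}}\,\ell/(p^2-1)$.

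\textbf{HS norm.} By independence, $\mathbb E\operatorname{tr}(\mathcal R^{-2}\mathcal M)=\operatorname{tr}(\mathbb E[\mathcal R^{-2}]\,\mathbb E[\mathcal M])$ with $\mathcal M:=\mathcal A^{\dagger}\mathcal A^{\dagger*}$. The main obstacle is to show that $\mathbb E[\mathcal M]$ is a multiple of $\mathrm{Id}_B$. For this we use the Haar property: conditionally on $\Xi$, $\mathcal U$ is right-invariant, so $\mathcal U\mathcal O\stackrel{d}{=}\mathcal U$ for every orthogonal $\mathcal O$ on $B$. Then $\mathcal A\mathcal O\stackrel{d}{=}\mathcal A$, so $(\mathcal A\mathcal O)^{\dagger}=\mathcal O^*\mathcal A^{\dagger}$ and $\mathbb E[\mathcal M]=\mathcal O^*\mathbb E[\mathcal M]\mathcal O$ for all $\mathcal O$. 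Hence
\[
\mathbb E[\mathcal M]=\tfrac{1}{\ell}\,\mathbb E\|\mathcal A^{\dagger}\|_{\mathrm{HS}}^2\,\mathrm{Id}_B.
\]
Note that $\mathbb E[\mathcal M]$ is finite by \eqref{eq:A-hs-moment-new}. Therefore
\[
\mathbb E\|\mathcal P_1^{\dagger}\|_{\mathrm{HS}}^2\le\tfrac1\ell\,\mathbb E\operatorname{tr}(\mathcal R^{-2})\cdot d\,\tfrac{\ell-p}{p-1}=\rho_{\mathrm{HS}}\,\tfrac{\ell-p}{p-1},
\]
where the inequality uses \eqref{eq:A-hs-moment-new}.

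\textbf{Finiteness.} We have $\operatorname{tr}(\mathcal R^{-2})\le\ell\|\mathcal R^{-1}\|_{\mathrm{op}}^2$, so $\rho_{\mathrm{HS}}\le\rho_{\mathrm{op}}<\infty$ by \ref{itm:haar-grassmann-sketch-moment}.

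\textbf{Isometric case.} If $\mathcal R=\sqrt d\,\mathrm{Id}_B$, then $\operatorname{tr}(\mathcal R^{-2})=\ell/d$ and $\|\mathcal R^{-1}\|_{\mathrm{op}}^2=1/d$, which gives $\rho_{\mathrm{HS}}=\rho_{\mathrm{op}}=1$.
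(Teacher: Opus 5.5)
Your proof is correct and has the same structure as the paper's. You restrict $\mathcal P_1$ to $B$ to get $\mathcal A\mathcal R$, factor the expectations via independence of $\mathcal U$ and $\mathcal R$, and use right-invariance of the Haar law to get $\mathbb E[\mathcal A^{\dagger}\mathcal A^{\dagger *}]=\frac1\ell\,\mathbb E\|\mathcal A^{\dagger}\|_{\mathrm{HS}}^2\,\mathrm{Id}_B$. You then finish with \cref{lem:haar-grassmann-A-moments}.

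The one step where you genuinely differ is the pseudoinverse bound, and there your version is the sound one. The paper asserts $(\mathcal A\mathcal R)^{\dagger}=\mathcal R^{-1}\mathcal A^{\dagger}$ almost surely and uses it as an identity in the Hilbert--Schmidt computation. That reverse-order law needs the left factor to have full column rank, but $\mathcal A$ only has full row rank. Both operators are right inverses of $\mathcal A\mathcal R$, and $(\mathcal A\mathcal R)^{\dagger}$ is the one with range $\mathcal R(\operatorname{range}\mathcal A^{*})$, while $\mathcal R^{-1}\mathcal A^{\dagger}$ has range $\mathcal R^{-1}(\operatorname{range}\mathcal A^{*})$. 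So the identity holds if and only if $\mathcal R^{2}$ leaves $\operatorname{range}(\mathcal A^{*})$ invariant, for example when $\mathcal R\propto\mathrm{Id}_B$.

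Under Gaussian sketching the identity fails. Combined with the exact value in \cref{lem:haar-grassmann-A-moments}, it would give $\mathbb E\|\mathcal P_1^{\dagger}\|_{\mathrm{HS}}^2=\frac{(\ell-p)(d-\ell+p-1)}{(d-\ell-1)(p-1)}$. The true value, the one used in the proof of \cref{cor:manifold-22-gaussian}, is $\frac{\ell-p}{p-1}$.

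Your observation fixes this. $\mathcal R^{-1}\mathcal A^{\dagger}$ is merely some right inverse of the surjective map $\mathcal A\mathcal R$, and every right inverse $\mathcal X$ satisfies $\mathcal X^{*}\mathcal X\succeq((\mathcal A\mathcal R)^{\dagger})^{*}(\mathcal A\mathcal R)^{\dagger}$. That gives exactly the one-sided inequalities needed in both norms, so the stated bounds hold unchanged.

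One small wording point: justify the factorization of expectations by noting that $\mathcal A=\Pi_1\mathcal U$ is a measurable function of $\mathcal U$ alone, with $\Xi=\operatorname{range}\mathcal U$. That makes $\mathcal A$ independent of $\mathcal R$; a statement about the law of $\mathcal A$ does not by itself give independence.
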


\begin{proof}
  We follow the notation in the proof of \cref{prop:manifold-22-routeB}.
  Since $\Pi_B$ restricts the domain to $B$, the operator $\mathcal P_1=\Pi_1\mathcal P$ has the same nonzero singular values as its restriction to $B$, namely
  $
  \mathcal P_1\vert_B=\Pi_1\mathcal F:B\to \operatorname{range}(\Pi_1).
  $
  Hence
  $
  \|\mathcal P_1^{\dagger}\|_{\mathrm{HS}}
  =
  \|(\mathcal P_1\vert_B)^{\dagger}\|_{\mathrm{HS}}
  $, $
  \|\mathcal P_1^{\dagger}\|_{\mathrm{op}}
  =
  \|(\mathcal P_1\vert_B)^{\dagger}\|_{\mathrm{op}}.
  $
 Define
  $
  \mathcal A
  $ as in \cref{lem:haar-grassmann-A-moments}.
  Then
  $
  \mathcal P_1\vert_B
  =
  \Pi_1\mathcal F
  =
  \Pi_1\mathcal U\mathcal R
  =
  \mathcal A\mathcal R.
  $
  By \cref{lem:haar-grassmann-A-moments}, $\mathcal A$ has full row rank almost surely.
  Since $\mathcal R:B\to B$ is invertible almost surely, it follows that $\mathcal A\mathcal R$ has full row rank almost surely.
  Therefore
  $
  (\mathcal A\mathcal R)^{\dagger}
  =
  \mathcal R^{-1}\mathcal A^{\dagger}
 $
 almost surely.

  For the operator norm, it holds that
  \[
  \|\mathcal P_1^{\dagger}\|_{\mathrm{op}}^{2}
  =
  \|(\mathcal A\mathcal R)^{\dagger}\|_{\mathrm{op}}^{2}
  \le
  \|\mathcal R^{-1}\|_{\mathrm{op}}^{2}\,\|\mathcal A^{\dagger}\|_{\mathrm{op}}^{2}.
  \]
  By~\ref{itm:haar-grassmann-sketch-polar} of \cref{def:haar-grassmann-sketch}, $\mathcal U$ is independent of $\mathcal R$.
  Since $\mathcal A=\Pi_1\mathcal U$ is a measurable function of $\mathcal U$ alone, $\mathcal A$ is independent of $\mathcal R$.
  Taking expectations therefore gives
  \begin{equation}
    \label{eq:haar-op-separate-new}
    \mathbb E\big[\|\mathcal P_1^{\dagger}\|_{\mathrm{op}}^{2}\big]
    \le
    \mathbb E\big[\|\mathcal R^{-1}\|_{\mathrm{op}}^{2}\big]\,
    \mathbb E\big[\|\mathcal A^{\dagger}\|_{\mathrm{op}}^{2}\big]
    =
    \frac{\rho_{\mathrm{op}}}{d}\,\mathbb E\big[\|\mathcal A^{\dagger}\|_{\mathrm{op}}^{2}\big].
  \end{equation}

  For the Hilbert--Schmidt norm, using again $(\mathcal A\mathcal R)^{\dagger}=\mathcal R^{-1}\mathcal A^{\dagger}$ and cyclicity of the trace on $B$, we obtain
  $
  \|\mathcal P_1^{\dagger}\|_{\mathrm{HS}}^{2}
  =
  \|(\mathcal A\mathcal R)^{\dagger}\|_{\mathrm{HS}}^{2} 
  =
  \operatorname{tr}\Big(((\mathcal A\mathcal R)^{\dagger})^{*}(\mathcal A\mathcal R)^{\dagger}\Big) 
  =
  \operatorname{tr}\big(\mathcal A^{\dagger *}\mathcal R^{-2}\mathcal A^{\dagger}\big) 
  =
  \operatorname{tr}\big(\mathcal A^{\dagger}\mathcal A^{\dagger *}\mathcal R^{-2}\big).
  $
  Conditionally on $\Xi$, the isometry $\mathcal U$ is Haar-uniform on the set of linear isometries from $B$ to $\Xi$.
  Hence for every orthogonal operator $\mathcal O:B\to B$,
  $
  \mathcal A\mathcal O
  =
  \Pi_1(\mathcal U\mathcal O)
  \stackrel{d}{=}
  \Pi_1\mathcal U
  =
  \mathcal A.
  $
  Since $(\mathcal A\mathcal O)^{\dagger}=\mathcal O^{*}\mathcal A^{\dagger}$, it follows that
  $
  \mathcal A^{\dagger}\mathcal A^{\dagger *}
  \stackrel{d}{=}
  \mathcal O^{*}\big(\mathcal A^{\dagger}\mathcal A^{\dagger *}\big)\mathcal O.
  $
  Therefore $\mathbb E[\mathcal A^{\dagger}\mathcal A^{\dagger *}]$ commutes with every orthogonal operator on $B$, and hence
  $
  \mathbb E\big[\mathcal A^{\dagger}\mathcal A^{\dagger *}\big]
  =
  c\,\mathrm{Id}_B
  $
  for some scalar $c$.
  Taking traces gives
  $
  c
  =
  \frac{1}{\ell}\,\operatorname{tr}\Big(\mathbb E\big[\mathcal A^{\dagger}\mathcal A^{\dagger *}\big]\Big)
  =
  \frac{1}{\ell}\,\mathbb E\big[\|\mathcal A^{\dagger}\|_{\mathrm{HS}}^{2}\big].
  $
  Thus
  $
  \mathbb E\big[\mathcal A^{\dagger}\mathcal A^{\dagger *}\big]
  =
  \frac{1}{\ell}\,\mathbb E\big[\|\mathcal A^{\dagger}\|_{\mathrm{HS}}^{2}\big]\,
  \mathrm{Id}_B.
  $
  Using again the independence of $\mathcal A$ and $\mathcal R$, it holds that
  \begin{equation}
    \label{eq:haar-hs-separate-new}
    \mathbb E\big[\|\mathcal P_1^{\dagger}\|_{\mathrm{HS}}^{2}\big]
    =
    \mathbb E\Big[\operatorname{tr}\big(\mathcal A^{\dagger}\mathcal A^{\dagger *}\mathcal R^{-2}\big)\Big] 
    =
    \operatorname{tr}\Big(
    \mathbb E\big[\mathcal A^{\dagger}\mathcal A^{\dagger *}\big]\,
    \mathbb E\big[\mathcal R^{-2}\big]
    \Big)
    =
    \frac{\rho_{\mathrm{HS}}}{d}\,\mathbb E\big[\|\mathcal A^{\dagger}\|_{\mathrm{HS}}^{2}\big].
  \end{equation}
  Applying \cref{lem:haar-grassmann-A-moments} in \eqref{eq:haar-hs-separate-new} and \eqref{eq:haar-op-separate-new} yields
  \[
  \mathbb E\big[\|\mathcal P_1^{\dagger}\|_{\mathrm{HS}}^{2}\big]
  \le
  \rho_{\mathrm{HS}}\,\frac{\ell-p}{p-1},
  \qquad
  \mathbb E\big[\|\mathcal P_1^{\dagger}\|_{\mathrm{op}}^{2}\big]
  \le
  C_0\,\rho_{\mathrm{op}}\,\frac{\ell}{p^{2}-1}.
  \]
  Finally,
  $
  \operatorname{tr}(\mathcal R^{-2})
  \le
  \ell\,\|\mathcal R^{-1}\|_{\mathrm{op}}^{2}
  $
  implies
  $
  \rho_{\mathrm{HS}}
  \le
  \rho_{\mathrm{op}}
  <
  \infty
  $
  by~\ref{itm:haar-grassmann-sketch-moment} of \cref{def:haar-grassmann-sketch}.
  In the isometric special case $\mathcal R\equiv \sqrt d\,\mathrm{Id}_B$, one has
  $
  \rho_{\mathrm{HS}}
  =
  \frac{d}{\ell}\,\operatorname{tr}(d^{-1}\mathrm{Id}_B)
  =
  1$,
  $
  \rho_{\mathrm{op}}
  =
  d\,\|d^{-1/2}\mathrm{Id}_B\|_{\mathrm{op}}^{2}
  =
  1.
  $
\end{proof}

\begin{theorem}[Approximation error under Haar--Grassmann sketching]
  \label{thm:manifold-22-haar}
  Let $\ell\ge 4$.
  Suppose $\mathcal{P}_{x, B, \Xi}$ satisfies the Haar--Grassmann sketching condition and let $\rho_{\mathrm{HS}},\rho_{\mathrm{op}}$ and $C_0$ be as in \cref{prop:haar-grassmann-pinv-moments}.
  Then 
  \[
  \begin{aligned}
  \mathbb E\left[ \big\|\mathcal{H}_x-\mathcal{H}_{x, B, \Xi}\big\|_{\mathrm{op}} \right]
  \le
  \min_{2\le p\le \ell-2}
  &\Bigg\{\left(1+2\rho_{\mathrm{HS}}\,\tfrac{\ell-p}{p-1}\right)\lambda_{\ell-p+1}(\mathcal{H}_x)
  \\
  & \quad +2C_0\,\rho_{\mathrm{op}}\,\tfrac{\ell}{p^{2}-1}\sum_{j>\ell-p}\lambda_j(\mathcal{H}_x)
  \Bigg\}.
  \end{aligned}
  \]
\end{theorem}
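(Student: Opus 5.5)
The theorem follows by combining the distribution-agnostic bound of \cref{prop:manifold-22-routeB} with the Haar--Grassmann moment estimates of \cref{prop:haar-grassmann-pinv-moments}, and then optimizing over the oversampling parameter $p$.

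\emph{Step 1: the bound for a fixed $p$.} Fix $p\in\{2,\dots,\ell-2\}$ and set $r=\ell-p$. The proof of \cref{prop:manifold-22-routeB} yields the explicit inequality
\[
\mathbb E\big[\|\mathcal H_x-\mathcal H_{x,B,\Xi}\|_{\mathrm{op}}\big]\le \lambda_{\ell-p+1}(\mathcal H_x)+2\,m_{\mathrm{HS}}\,\lambda_{\ell-p+1}(\mathcal H_x)+2\,m_{\mathrm{op}}\sum_{j>\ell-p}\lambda_j(\mathcal H_x),
\]
where $m_{\mathrm{HS}}=\mathbb E[\|\mathcal P_1^\dagger\|_{\mathrm{HS}}^2]$ and $m_{\mathrm{op}}=\mathbb E[\|\mathcal P_1^\dagger\|_{\mathrm{op}}^2]$. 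This inequality holds with $\Pi_1$ taken as the spectral projection associated with the top $r$ eigenvalues. The same $\Pi_1$, and hence the same $\mathcal P_1$, are the objects used in \cref{lem:haar-grassmann-A-moments} and \cref{prop:haar-grassmann-pinv-moments}, so the two results refer to identical quantities.

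\emph{Step 2: insert the moment estimates.} Under the Haar--Grassmann condition, \cref{prop:haar-grassmann-pinv-moments} gives
\[
m_{\mathrm{HS}}\le\rho_{\mathrm{HS}}\,\frac{\ell-p}{p-1}, \qquad m_{\mathrm{op}}\le C_0\,\rho_{\mathrm{op}}\,\frac{\ell}{p^2-1},
\]
and both $\rho$'s are finite. Each right-hand side multiplies a nonnegative spectral quantity, since $\lambda_j(\mathcal H_x)\ge 0$ because $\mathcal H_x\succeq 0$. Substituting the two estimates is therefore monotone and yields the bracketed expression of the theorem for that $p$.

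\emph{Step 3: minimize over $p$.} The left-hand side does not depend on $p$, and the bound holds for every admissible $p$. Taking the minimum over the finite set $\{2,\dots,\ell-2\}$, which is nonempty because $\ell\ge4$, gives the claim.

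The main obstacle is purely one of bookkeeping: one must make sure that the additive bound from \cref{prop:manifold-22-routeB} is used in its explicit form, with the moments $m_{\mathrm{HS}}$ and $m_{\mathrm{op}}$ written out rather than left as existential constants. Once that form is available and the projections are matched as in Step 1, no further probabilistic argument is needed.
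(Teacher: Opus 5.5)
Your proposal is correct and follows the paper's proof, which likewise combines \cref{prop:manifold-22-routeB}, used in its explicit form with $m_{\mathrm{HS}}=\mathbb E[\|\mathcal P_1^{\dagger}\|_{\mathrm{HS}}^{2}]$ and $m_{\mathrm{op}}=\mathbb E[\|\mathcal P_1^{\dagger}\|_{\mathrm{op}}^{2}]$, with the moment bounds of \cref{prop:haar-grassmann-pinv-moments}, and then minimizes over $p$. The checks you spell out (that $\Pi_1$ is the same projection in both results, and that the substitution is monotone because the eigenvalues are nonnegative) are exactly what the paper's one-line proof leaves implicit.
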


\begin{proof}
Combining \cref{prop:manifold-22-routeB} with \cref{prop:haar-grassmann-pinv-moments} gives the desired result.
\end{proof}

For $p \in \{1,\dots,d\}$, $\lambda_p(\mathcal{H}_x)>0$, define the $p$-stable rank by
\[
\mathrm{sr}_p(\mathcal{H}_x):=\lambda_p(\mathcal{H}_x)^{-1}\sum_{j=p}^d \lambda_j(\mathcal{H}_x).
\]
The stable rank captures how much of the operator's ``energy'' is concentrated in the top eigenvalues.
The following corollary provides a simplified and more interpretable version of the error. By introducing the concept of stable rank, this result gives a cleaner bound that is easier to work with in practice.
\begin{corollary}
  \label{cor:manifold-23}
  Let $p\ge 2$ and $\ell=2p-1$. 
  suppose that $\mathcal{P}_{x, B, \Xi}$ satisfies the Haar--Grassmann sketching condition, and let $\rho_{\mathrm{HS}},\rho_{\mathrm{op}}$ and $C_0$ be as in \cref{prop:haar-grassmann-pinv-moments}.
  Then it holds that
  \[
  \mathbb E\big[ \|\mathcal{H}_x-\mathcal{H}_{x, B, \Xi}\|_{\mathrm{op}} \big]
  \le
  \left(1+2\rho_{\mathrm{HS}}+\frac{4C_0\rho_{\mathrm{op}}}{p}\,\mathrm{sr}_p(\mathcal{H}_x)\right)\lambda_p(\mathcal{H}_x).
  \]
\end{corollary}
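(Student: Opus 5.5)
The plan is to specialize \cref{thm:manifold-22-haar} to a single admissible value of the oversampling parameter rather than minimizing. With $\ell=2p-1$, I would take the oversampling parameter in the theorem to be $p' := p-1$, so that $\ell-p'=p$. This is admissible when $2\le p'\le \ell-2$, i.e.\ $p\ge 3$ (and $\ell\ge4$ requires $p\ge3$ anyway; the boundary case $p=2$, $\ell=3$ falls outside the theorem and would need a remark or is implicitly excluded). Then $\lambda_{\ell-p'+1}(\mathcal H_x)=\lambda_{p+1}(\mathcal H_x)\le\lambda_p(\mathcal H_x)$ and $\sum_{j>\ell-p'}\lambda_j=\sum_{j>p}\lambda_j\le \sum_{j\ge p}\lambda_j=\mathrm{sr}_p(\mathcal H_x)\lambda_p(\mathcal H_x)$.

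Next I would bound the coefficients. The first coefficient is $1+2\rho_{\mathrm{HS}}\frac{\ell-p'}{p'-1}=1+2\rho_{\mathrm{HS}}\frac{p}{p-2}$, which is not $\le 1+2\rho_{\mathrm{HS}}$. So this choice is off. Instead I would choose $p'$ so that $\frac{\ell-p'}{p'-1}\le1$, i.e.\ $p'\ge(\ell+1)/2=p$. Taking $p'=p$ gives $\ell-p'=p-1$ and $\frac{\ell-p'}{p'-1}=1$, yielding the first coefficient $1+2\rho_{\mathrm{HS}}$ exactly, with eigenvalue $\lambda_{\ell-p+1}=\lambda_p(\mathcal H_x)$. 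This choice is admissible for $2\le p\le \ell-2=2p-3$, i.e.\ $p\ge3$, and again $\ell\ge4$.

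For the tail term with $p'=p$, I have $\sum_{j>\ell-p}\lambda_j=\sum_{j\ge p}\lambda_j=\mathrm{sr}_p(\mathcal H_x)\lambda_p(\mathcal H_x)$. The coefficient is $2C_0\rho_{\mathrm{op}}\frac{\ell}{p^2-1}=2C_0\rho_{\mathrm{op}}\frac{2p-1}{(p-1)(p+1)}$. I then need $\frac{2p-1}{p^2-1}\le\frac{2}{p}$, which is equivalent to $p(2p-1)\le 2p^2-2$, i.e.\ $-p\le-2$. This holds for $p\ge2$.

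Assembling these gives the stated bound, provided $\lambda_p(\mathcal H_x)>0$ so that $\mathrm{sr}_p$ is defined. If $\lambda_p=0$, then $\mathcal H_x$ has rank $<p\le\ell$, and I would argue separately that both sides vanish: the right side is $0$ with the convention that $\mathrm{sr}_p\lambda_p=\sum_{j\ge p}\lambda_j=0$, and the theorem's bound gives $0$ directly.

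The only genuine obstacle is the range of $p$. The theorem requires $\ell\ge4$, which excludes $p=2$. I would handle $p=2$, $\ell=3$ either by noting that \cref{prop:manifold-22-routeB} and \cref{prop:haar-grassmann-pinv-moments} only use $\ell-r=p\ge2$ (the Wishart inverse moment needs $p\ge2$), so their proofs go through verbatim for $\ell=3$, $p=2$, $r=1$. Otherwise I would state the corollary for $p\ge3$.
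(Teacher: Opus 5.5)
Your final argument, after you discard the false start $p'=p-1$, is the paper's: take the parameter in the minimum of \cref{thm:manifold-22-haar} equal to $p$. Then $\ell-p+1=p$ and $\sum_{j>\ell-p}\lambda_j(\mathcal{H}_x)=\mathrm{sr}_p(\mathcal{H}_x)\lambda_p(\mathcal{H}_x)$, the first coefficient collapses to $1+2\rho_{\mathrm{HS}}$, and $2\ell/(p^2-1)\le 4/p$ for $p\ge 2$.

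Your remark that this choice is admissible only for $p\ge 3$ is correct, and the paper's proof passes over it: for $p=2$ one has $\ell=3<4$ and $\{2,\dots,\ell-2\}=\emptyset$. So the case $p=2$ needs one of two things. Either re-verify the auxiliary estimates for $\ell=3$, $r=1$, including the cited external Gaussian bounds, or restrict the corollary to $p\ge 3$.
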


\begin{proof}
Apply \cref{thm:manifold-22-haar} with $\ell=2p-1$ and choose the parameter in the minimum to be the same $p\in\{2,\dots,\ell-2\}$. Since $\ell-p+1=p$, we have
\[
\lambda_{\ell-p+1}(\mathcal{H}_x)=\lambda_p(\mathcal{H}_x),
\qquad
\sum_{j>\ell-p}\lambda_j(\mathcal{H}_x)=\sum_{j>p-1}\lambda_j(\mathcal{H}_x)=\sum_{j=p}^d\lambda_j(\mathcal{H}_x).
\]
The first coefficient simplifies to
$
1+2\rho_{\mathrm{HS}}\,\frac{\ell-p}{p-1}
=
1+2\rho_{\mathrm{HS}}\,\frac{p-1}{p-1}
=
1+2\rho_{\mathrm{HS}}.
$
For the second coefficient, using $\ell=2p-1$ and $p\ge 2$ we have
$
\frac{2\ell}{p^2-1}
=
\frac{2(2p-1)}{p^2-1}
\le
\frac{4}{p}.
$
Substituting these simplifications into \cref{thm:manifold-22-haar} yields
\[
\begin{aligned}
\mathbb E\big[ \|\mathcal{H}_x-\mathcal{H}_{x, B, \Xi}\|_{\mathrm{op}} \big]
&\le
\bigl(1+2\rho_{\mathrm{HS}}\bigr)\lambda_p(\mathcal{H}_x)
+\frac{4C_0\rho_{\mathrm{op}}}{p}\sum_{j=p}^d\lambda_j(\mathcal{H}_x)\\
&=
\left(1+2\rho_{\mathrm{HS}}+\frac{4C_0\rho_{\mathrm{op}}}{p}\,\mathrm{sr}_p(\mathcal{H}_x)\right)\lambda_p(\mathcal{H}_x).
\end{aligned}
\]
\end{proof}

In scientific computing, many algorithms (e.g., variants of Newton-type methods) rely on regularized inverses of operators. The following proposition quantifies how the Riemannian Nystr\"om approximation impacts ridge-regularized inverses. Specifically, it shows that the error in the ridge inverse is controlled predictably by the underlying approximation error.

\begin{proposition}
    \label{prop:manifold-31}
    For any $\nu>0$, it holds that
    \[
    \big\|( \mathcal{H}_{x, B, \Xi}+\nu \,\mathrm{Id}_x)^{-1}-(\mathcal{H}_x+\nu \,\mathrm{Id}_x)^{-1}\big\|_{\mathrm{op}}
    \le
    \frac{\|\mathcal{H}_x-\mathcal{H}_{x,B,\Xi}\|_{\mathrm{op}}}{\nu\,\bigl(\lambda_d(\mathcal{H}_x)+\nu\bigr)}.
    \]
\end{proposition}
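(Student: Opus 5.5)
We will use the second resolvent identity. Set $\mathcal{E}:=\mathcal{H}_x-\mathcal{H}_{x,B,\Xi}$, and write $\mathcal{A}_\nu:=\mathcal{H}_{x,B,\Xi}+\nu\,\mathrm{Id}_x$ and $\mathcal{B}_\nu:=\mathcal{H}_x+\nu\,\mathrm{Id}_x$. Both operators are $g_x$-self-adjoint. By \cref{lem:riem-21}(i), $\mathcal{H}_{x,B,\Xi}\succeq 0$, so $\mathcal{A}_\nu\succeq\nu\,\mathrm{Id}_x$. Also $\mathcal{B}_\nu\succeq(\lambda_d(\mathcal{H}_x)+\nu)\,\mathrm{Id}_x$. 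Since $\nu>0$, both operators are invertible on $\mathrm{T}_x\mathcal{M}$. The identity is
\[
\mathcal{A}_\nu^{-1}-\mathcal{B}_\nu^{-1}=\mathcal{A}_\nu^{-1}(\mathcal{B}_\nu-\mathcal{A}_\nu)\mathcal{B}_\nu^{-1}=\mathcal{A}_\nu^{-1}\,\mathcal{E}\,\mathcal{B}_\nu^{-1}.
\]
To check it, multiply by $\mathcal{A}_\nu$ on the left and by $\mathcal{B}_\nu$ on the right. This is pure linear algebra and needs no coordinates.

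Next we apply submultiplicativity of the $g_x$-operator norm:
\[
\|\mathcal{A}_\nu^{-1}-\mathcal{B}_\nu^{-1}\|_{\mathrm{op}}\le\|\mathcal{A}_\nu^{-1}\|_{\mathrm{op}}\,\|\mathcal{E}\|_{\mathrm{op}}\,\|\mathcal{B}_\nu^{-1}\|_{\mathrm{op}}.
\]
We bound the two inverse norms by the spectral theorem for self-adjoint PSD operators on the inner-product space $(\mathrm{T}_x\mathcal{M},g_x)$. The inverse of a self-adjoint operator $\succeq c\,\mathrm{Id}_x$ with $c>0$ has operator norm at most $1/c$. This gives $\|\mathcal{A}_\nu^{-1}\|_{\mathrm{op}}\le 1/\nu$ and $\|\mathcal{B}_\nu^{-1}\|_{\mathrm{op}}=1/(\lambda_d(\mathcal{H}_x)+\nu)$. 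Substituting these bounds yields the stated inequality.

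There is no real obstacle here. The only point needing care is the bound on $\|\mathcal{A}_\nu^{-1}\|_{\mathrm{op}}$. We cannot use $\lambda_d(\mathcal{H}_{x,B,\Xi})$ in place of $\lambda_d(\mathcal{H}_x)$, because $\mathcal{H}_{x,B,\Xi}$ has rank at most $\ell$ and is generally singular when $\ell<d$. So the bound must rely only on $\mathcal{H}_{x,B,\Xi}\succeq 0$, which \cref{lem:riem-21}(i) provides. This asymmetry is exactly why the denominator has the form $\nu(\lambda_d(\mathcal{H}_x)+\nu)$, and why the two factors must be placed on the correct sides of the resolvent identity.
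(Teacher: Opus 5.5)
Your proposal is correct and follows the paper's proof: you use the resolvent identity with $\mathcal{E}=\mathcal{H}_x-\mathcal{H}_{x,B,\Xi}$, then submultiplicativity, then the bounds $\nu^{-1}$ (from $\mathcal{H}_{x,B,\Xi}\succeq 0$ in \cref{lem:riem-21}) and $(\lambda_d(\mathcal{H}_x)+\nu)^{-1}$. One small overstatement: the order of the factors does not matter, since $\mathcal{B}_\nu^{-1}\,\mathcal{E}\,\mathcal{A}_\nu^{-1}$ is an equally valid identity and gives the same bound.
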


\begin{proof}
Let $\Delta_x:=\mathcal{H}_x-\mathcal{H}_{x,B,\Xi}$. By \cref{lem:riem-21}, we have $\Delta_x\succeq 0$ and $\mathcal{H}_{x,B,\Xi}\preceq \mathcal{H}_x$.
For $\nu>0$, both $\mathcal{H}_x+\nu\,\mathrm{Id}_x$ and $\mathcal{H}_{x,B,\Xi}+\nu\,\mathrm{Id}_x$ are invertible.
Using the resolvent identity,
\[
(\mathcal{H}_{x,B,\Xi}+\nu\,\mathrm{Id}_x)^{-1}-(\mathcal{H}_x+\nu\,\mathrm{Id}_x)^{-1}
=(\mathcal{H}_{x,B,\Xi}+\nu\,\mathrm{Id}_x)^{-1}\,\Delta_x\,(\mathcal{H}_x+\nu\,\mathrm{Id}_x)^{-1}.
\]
Taking operator norms and using submultiplicativity yields
$
\big\|(\mathcal{H}_{x,B,\Xi}+\nu\,\mathrm{Id}_x)^{-1}-(\mathcal{H}_x+\nu\,\mathrm{Id}_x)^{-1}\big\|_{\mathrm{op}} 
  \le
  \|(\mathcal{H}_{x,B,\Xi}+\nu\,\mathrm{Id}_x)^{-1}\|_{\mathrm{op}}\,\|\Delta_x\|_{\mathrm{op}}\,\|(\mathcal{H}_x+\nu\,\mathrm{Id}_x)^{-1}\|_{\mathrm{op}}.
$
Since $\mathcal{H}_{x,B,\Xi}\succeq 0$, we have $\|(\mathcal{H}_{x,B,\Xi}+\nu\,\mathrm{Id}_x)^{-1}\|_{\mathrm{op}}\le \nu^{-1}$.
Moreover, $\|(\mathcal{H}_x+\nu\,\mathrm{Id}_x)^{-1}\|_{\mathrm{op}}=(\lambda_d(\mathcal{H}_x)+\nu)^{-1}$.
Substituting these bounds gives the claim.
\end{proof}

The final result combines the approximation error with the ridge inverse perturbation analysis to provide a complete picture of how Riemannian Nystr\"om approximations affect regularized inverses. 

\begin{proposition}
  \label{prop:manifold-42}
  Fix $p\ge 2$ and set $\ell=2p-1$.
  Suppose $\mathcal{P}_{x, B, \Xi}$ satisfies the Haar--Grassmann sketching condition, and let $\rho_{\mathrm{HS}},\rho_{\mathrm{op}}$ and $C_0$ be as in \cref{prop:haar-grassmann-pinv-moments}.
  Then for any $\nu>0$, it holds that
  \[
  \begin{aligned} 
  & \mathbb E\left[\big\|(\mathcal{H}_x+\nu \mathrm{Id}_x)^{-1}-(\mathcal{H}_{x, B, \Xi}+\nu \mathrm{Id}_x)^{-1}\big\|_{\mathrm{op}}\right]
  \\
  &
  \le
  \left(1+2\rho_{\mathrm{HS}}+\frac{4C_0\rho_{\mathrm{op}}}{p}\,\mathrm{sr}_p(\mathcal{H}_x)\right)
  \frac{\lambda_p(\mathcal{H}_x)}{\nu(\lambda_d(\mathcal{H}_x)+\nu)}.
  \end{aligned}
  \]
\end{proposition}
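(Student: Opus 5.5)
The plan is to combine the deterministic ridge-inverse perturbation bound of \cref{prop:manifold-31} with the expected approximation error of \cref{cor:manifold-23}. The only randomness sits in $\mathcal{H}_{x,B,\Xi}$ through the sketching operator $\mathcal{P}_{x,B,\Xi}$. The factor $\nu^{-1}(\lambda_d(\mathcal{H}_x)+\nu)^{-1}$ in \cref{prop:manifold-31} is deterministic, since it depends only on $\nu$ and on the spectrum of $\mathcal{H}_x$. So the expectation should pass through it by linearity.

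\textbf{Step 1: pointwise bound.} For every realization of the sketch (almost surely under the Haar--Grassmann condition), \cref{prop:manifold-31} gives
\[
\big\|(\mathcal{H}_x+\nu\mathrm{Id}_x)^{-1}-(\mathcal{H}_{x,B,\Xi}+\nu\mathrm{Id}_x)^{-1}\big\|_{\mathrm{op}}
\le \frac{\|\mathcal{H}_x-\mathcal{H}_{x,B,\Xi}\|_{\mathrm{op}}}{\nu(\lambda_d(\mathcal{H}_x)+\nu)}.
\]
The norm of a difference is symmetric in its two arguments, so the order of the two inverses in the statement does not matter. We should also record measurability, so that the expectation is meaningful. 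Both sides are continuous functions of $\mathcal{H}_{x,B,\Xi}$, and $\mathcal{H}_{x,B,\Xi}$ is a measurable function of $\mathcal{F}$ on the almost-sure full-rank event. The left-hand side is moreover bounded by $2/\nu$, so it is integrable.

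\textbf{Step 2: take expectations.} Taking expectations and using monotonicity and linearity of $\mathbb E$, the left-hand side of the claim is at most $\mathbb E[\|\mathcal{H}_x-\mathcal{H}_{x,B,\Xi}\|_{\mathrm{op}}]/(\nu(\lambda_d(\mathcal{H}_x)+\nu))$.

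\textbf{Step 3: invoke the corollary.} With $p\ge 2$ and $\ell=2p-1$, \cref{cor:manifold-23} bounds this expectation by
\[
\Bigl(1+2\rho_{\mathrm{HS}}+\tfrac{4C_0\rho_{\mathrm{op}}}{p}\mathrm{sr}_p(\mathcal{H}_x)\Bigr)\lambda_p(\mathcal{H}_x).
\]
Substituting this gives exactly the claimed inequality.

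\textbf{Main obstacle: degenerate cases.} The argument is essentially routine, so the only real issues are edge cases that the cited results silently assume.
\begin{enumerate}
\item The stable rank $\mathrm{sr}_p(\mathcal{H}_x)$ needs $\lambda_p(\mathcal{H}_x)>0$. If instead $\lambda_p(\mathcal{H}_x)=0$, then $\operatorname{rank}(\mathcal{H}_x)\le p-1<\ell$. In that case I would read the product $\mathrm{sr}_p(\mathcal{H}_x)\lambda_p(\mathcal{H}_x)$ as $\sum_{j\ge p}\lambda_j(\mathcal{H}_x)=0$, and the bound from \cref{thm:manifold-22-haar} then becomes zero. The claim then requires a zero left-hand side almost surely, which should follow if almost surely $\mathcal{H}_x\mathcal{P}_{x,B,\Xi}$ has range equal to $\mathrm{range}(\mathcal{H}_x)$. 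This in turn holds because $\Xi$ is Haar-uniform, so $\Pi_1\mathcal{F}$ has full row rank almost surely by \cref{lem:haar-grassmann-A-moments}; I have not checked this step in detail.
\item The hypothesis $\ell\ge 4$ required by \cref{cor:manifold-23} and \cref{thm:manifold-22-haar} may fail. With $\ell=2p-1$ we get $\ell\ge 4$ only when $p\ge 3$. For $p=2$ we have $\ell=3$, so the index range $\{2,\dots,\ell-2\}$ is empty. I would therefore state that the corollary is being applied under its own standing hypotheses, or assume $p\ge 3$.
\end{enumerate}
Beyond these checks, the proof is a two-line composition of the earlier results.
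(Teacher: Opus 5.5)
Your proposal is correct and is exactly the paper's proof: apply \cref{prop:manifold-31} for each realization of the sketch, pass the expectation through the deterministic factor $1/(\nu(\lambda_d(\mathcal{H}_x)+\nu))$, and bound $\mathbb E[\|\mathcal{H}_x-\mathcal{H}_{x,B,\Xi}\|_{\mathrm{op}}]$ by \cref{cor:manifold-23}. Your $p=2$ caveat is a real defect in the cited corollary, which the paper's one-line proof inherits: $\ell=3<4$ and the index set $\{2,\dots,\ell-2\}$ is empty, so the cited results only cover $p\ge 3$; in the $\lambda_p(\mathcal{H}_x)=0$ case, your extra almost-sure range argument is not needed, because \cref{thm:manifold-22-haar} already gives a zero bound.
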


\begin{proof}
Taking expectations in \cref{prop:manifold-31} and using \cref{cor:manifold-23} to bound $\mathbb E [ \|\mathcal{H}_x-\mathcal{H}_{x,B,\Xi}\|_{\mathrm{op}} ]$ yields the stated inequality.
\end{proof}

\subsection{Transported sketching}
\label{subsec:transported-sketching}
In iterative methods on manifolds, the sketching is typically refreshed at each iterate.
When two points $x,x'\in\mathcal M$ are close and an isometric vector transport $\mathcal T_{x\to x'}$ is available, one may instead transport the sketching from $\mathrm T_x\mathcal M$ to $\mathrm T_{x'}\mathcal M$.
This reuses a common low-dimensional subspace structure across nearby iterates.
In this sense, the Haar--Grassmann sketching condition is a transport compatible geometric structure.

Let $\mathcal T_{x\to x'}:\mathrm T_x\mathcal M\to \mathrm T_{x'}\mathcal M$ be an isometric vector transport, namely
\[
\langle \mathcal T_{x\to x'}u,\mathcal T_{x\to x'}v\rangle_{x'}=\langle u,v\rangle_x, \quad \forall u,v\in\mathrm T_x\mathcal M.
\]
Given subspaces $B,\Xi\subset\mathrm T_x\mathcal M$ with $\dim(B)=\dim(\Xi)=\ell$, define their transported counterparts by
\[
B' := \mathcal T_{x\to x'}(B)\subset\mathrm T_{x'}\mathcal M,
\qquad
\Xi' := \mathcal T_{x\to x'}(\Xi)\subset\mathrm T_{x'}\mathcal M.
\]
Given a full-rank map $\mathcal F:B\to\Xi$, define the transported map $\mathcal F':B'\to\Xi'$ by conjugation,
\begin{equation}
\label{eq:transported-F}
\mathcal F'
:= \bigl(\mathcal T_{x\to x'}\vert_{\Xi}\bigr)\,\mathcal F\,\bigl(\mathcal T_{x\to x'}\vert_{B}\bigr)^{-1}.
\end{equation}
The transported sketching operator is then $\mathcal P_{x',B',\Xi'}:=\mathcal F'\Pi_{B'}$.
The following theorem shows that the transported sketching operator satisfies the Haar--Grassmann sketching condition at $x'$.

 \begin{theorem}
 \label{thm:transport-closure-iff-haar}
  If the Haar--Grassmann sketching condition holds at $x \in \mathcal{M}$ for $\mathcal P_{x,B,\Xi}$, then $\mathcal P_{x',B',\Xi'}$ satisfies the Haar--Grassmann sketching condition at $x' \in \mathcal{M}$.
 \end{theorem}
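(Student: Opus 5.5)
Write $T:=\mathcal T_{x\to x'}$. Since $T$ is an isometric linear bijection between $d$-dimensional inner-product spaces, it maps $\ell$-dimensional subspaces to $\ell$-dimensional subspaces, intertwines orthogonal projections and adjoints, and preserves operator norms and traces under conjugation. The plan is to transport each ingredient of \cref{def:haar-grassmann-sketch} through $T$ and check the three conditions in turn. Conditioning on $B$ is the same as conditioning on $B'=T(B)$, because $T$ is deterministic and bijective.

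\textbf{The polar factors.} First I would identify the polar factorization of $\mathcal F'$. Set $T_B:=T\vert_B$ and $T_\Xi:=T\vert_\Xi$, both isometries onto their images. Put
\[
\mathcal U':=T_\Xi\,\mathcal U\,T_B^{-1}:B'\to\Xi',
\qquad
\mathcal R':=T_B\,\mathcal R\,T_B^{-1}:B'\to B'.
\]
Since $\mathcal U(B)=\Xi$, we can write $T_\Xi\,\mathcal U\,\mathcal R\,T_B^{-1}=T_\Xi\,\mathcal U\,T_B^{-1}\,T_B\,\mathcal R\,T_B^{-1}$, so $\mathcal F'=\mathcal U'\mathcal R'$. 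Here $\mathcal U'$ is a linear isometry. $\mathcal R'$ is $g_{x'}$-self-adjoint and positive definite, because $T_B^{-1}=T_B^{*}$ as maps $B'\to B$. By uniqueness of the polar decomposition for full-rank maps, these are \emph{the} polar factors of $\mathcal F'$. Moreover $\operatorname{rank}(\mathcal F')=\operatorname{rank}(\mathcal F)=\ell$ almost surely.

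\textbf{Condition (i).} For any $g_{x'}$-isometry $Q'$ of $\mathrm T_{x'}\mathcal M$, the map $Q:=T^{-1}Q'T$ is a $g_x$-isometry of $\mathrm T_x\mathcal M$. Then
\[
Q'(\Xi')=T(Q(\Xi))\stackrel{d}{=}T(\Xi)=\Xi',
\]
using the Haar-uniformity of $\Xi$. So $\Xi'$ has an isometry-invariant law on $\mathrm{Gr}(\ell,\mathrm T_{x'}\mathcal M)$. That law is the unique invariant probability measure, since the isometry group acts transitively on the Grassmannian.

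\textbf{Condition (ii).} Conditioning on $\Xi'$ is the same as conditioning on $\Xi=T^{-1}(\Xi')$. The map
\[
\Phi:\mathrm{Iso}(B,\Xi)\to\mathrm{Iso}(B',\Xi'),\qquad V\mapsto T_\Xi VT_B^{-1},
\]
is a bijection that intertwines the right action of the orthogonal group $O(B)$ with that of $O(B')$ via $\mathcal O\mapsto T_B\mathcal O T_B^{-1}$. Hence $\Phi$ pushes the Haar law to the Haar law, and $\mathcal U'$ is Haar-uniform on $\mathrm{Iso}(B',\Xi')$. Independence of $\mathcal U'$ and $\mathcal R'$ follows because they are measurable functions of $\mathcal U$ and of $\mathcal R$ respectively, and $T$ is deterministic.

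\textbf{Condition (iii).} We have $\mathcal R'^2=T_B\mathcal R^2T_B^{-1}$, so
\[
\mathbb E[\mathcal R'^2]=T_B\,(d\,\mathrm{Id}_B)\,T_B^{-1}=d\,\mathrm{Id}_{B'}.
\]
Also $\|\mathcal R'^{-1}\|_{\mathrm{op}}=\|\mathcal R^{-1}\|_{\mathrm{op}}$ by isometric conjugation, so the inverse moment stays finite. Note that $\dim\mathrm T_{x'}\mathcal M=d$ as well, so the constant $d$ is the correct one at $x'$.

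\textbf{Main obstacle.} The hard part is the measure-theoretic bookkeeping in (ii). One must make precise that the regular conditional law of $\mathcal U'$ given $\Xi'$ is the pushforward under $\Phi$ of the conditional law of $\mathcal U$ given $\Xi$. I would handle this by noting that $\sigma(\Xi')=\sigma(\Xi)$ and that $\Phi$ depends measurably on $\Xi$. The transitivity and invariance facts used in (i) and (ii) are standard.
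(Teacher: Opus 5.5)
Your proposal is correct and follows essentially the same route as the paper: conjugate the polar factors by $\mathcal T_{x\to x'}\vert_{\Xi}$ and $\mathcal T_{x\to x'}\vert_{B}$, push the Haar laws forward through the isometry (on the Grassmannian) and through the conjugation bijection $\Phi$ (on $\mathrm{Iso}(B,\Xi)$), and check the moment conditions by isometric conjugation. Three of your details are slightly more careful than the written proof: replacing the paper's tower-property step with $\sigma(\Xi')=\sigma(\Xi)$, stating the intertwining $\mathcal O\mapsto T_B\mathcal O T_B^{-1}$ correctly (the paper's $\Phi_{\Xi}(U\circ O)=\Phi_{\Xi}(U)\circ O$ does not type-check), and noting that $\mathcal U',\mathcal R'$ are the unique polar factors of $\mathcal F'$.
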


 \begin{proof}

 We verify the three requirements of \cref{def:haar-grassmann-sketch}.
 For~\ref{itm:haar-grassmann-sketch-haar}, let $\nu_x$ be the Haar measure on $\mathrm{Gr}(\ell,\mathrm T_x\mathcal M)$.
 Since $\mathcal T_{x\to x'}$ is an isometry, the pushforward, denoted by $(\mathcal T_{x\to x'})_{\#}\nu_x$, is isometry-invariant on $\mathrm{Gr}(\ell,\mathrm T_{x'}\mathcal M)$, hence equals the Haar measure $\nu_{x'}$ by uniqueness.
 Thus $\Xi\sim\nu_x$ implies $\Xi'=\mathcal T_{x\to x'}(\Xi)\sim\nu_{x'}$.

For~\ref{itm:haar-grassmann-sketch-polar},  define
$
\mathcal U'
:= \bigl(\mathcal T_{x\to x'}\vert_{\Xi}\bigr)\,\mathcal U\,\bigl(\mathcal T_{x\to x'}\vert_{B}\bigr)^{-1},
\mathcal R'
:= \bigl(\mathcal T_{x\to x'}\vert_{B}\bigr)\,\mathcal R\,\bigl(\mathcal T_{x\to x'}\vert_{B}\bigr)^{-1}.
$
Then $\mathcal F'=\mathcal U'\mathcal R'$ with $\mathcal U':B'\to\Xi'$ a linear isometry and $\mathcal R':B'\to B'$ $g_{x'}$-self-adjoint and positive definite.
To prove the conditional Haar property, fix $\Xi$ and consider the conjugation map
$
\Phi_{\Xi}:\mathrm{Iso}(B,\Xi)\to \mathrm{Iso}(B',\Xi'),
\Phi_{\Xi}(U):=(\mathcal T_{x\to x'}\vert_{\Xi})\,U\,(\mathcal T_{x\to x'}\vert_{B})^{-1}.
$
Since $\mathcal T_{x\to x'}\vert_{\Xi}$ and $\mathcal T_{x\to x'}\vert_{B}$ are linear isometries, $\Phi_{\Xi}$ is a bijection. Moreover, for every $O\in\mathrm{Iso}(B,B)$ one has
$\Phi_{\Xi}(U\circ O)=\Phi_{\Xi}(U)\circ O$.
Hence $\Phi_{\Xi}$ pushes forward the unique right-invariant probability measure on $\mathrm{Iso}(B,\Xi)$ to the unique right-invariant probability measure on $\mathrm{Iso}(B',\Xi')$.
Since, conditionally on $\Xi$, the law of $\mathcal U$ is Haar on $\mathrm{Iso}(B,\Xi)$, it follows that, conditionally on $\Xi$, the law of $\mathcal U'=\Phi_{\Xi}(\mathcal U)$ is Haar on $\mathrm{Iso}(B',\Xi')$.

Now let $\psi$ be any bounded measurable test function on the disjoint union of the spaces $\mathrm{Iso}(B',W)$ with $W\in\mathrm{Gr}(\ell,\mathrm T_{x'}\mathcal M)$. Then
$
\mathbb E\big[\psi(\mathcal U')\mid \Xi\big]
=\int_{\mathrm{Iso}(B',\Xi')}\psi(V)\,d\mu_{\Xi'}(V),
$
where $\mu_{\Xi'}$ denotes the Haar probability measure on $\mathrm{Iso}(B',\Xi')$.
The right-hand side depends on $\Xi$ only through $\Xi'$, and $\Xi'$ is a measurable function of $\Xi$.
Therefore, by the tower property,
$
\mathbb E\big[\psi(\mathcal U')\mid \Xi'\big]
=\int_{\mathrm{Iso}(B',\Xi')}\psi(V)\,d\mu_{\Xi'}(V),
$
which proves that, conditionally on $\Xi'$, the map $\mathcal U'$ is Haar-uniform on $\mathrm{Iso}(B',\Xi')$.

For~\ref{itm:haar-grassmann-sketch-moment}, note first that $\mathcal R'$ is a measurable function of $\mathcal R$ alone, while $(\Xi',\mathcal U')$ is a measurable function of $(\Xi,\mathcal U)$, hence of $\mathcal U$.
Since $\mathcal U$ is independent of $\mathcal R$ by~\ref{itm:haar-grassmann-sketch-polar}, it follows that $\mathcal U'$ is independent of $\mathcal R'$.
Moreover, conjugation by the isometry $\mathcal T_{x\to x'}\vert_B$ preserves adjoints, positivity, functional calculus, traces, and operator norms.
Consequently,
\[
\mathbb E\big[(\mathcal R')^{2}\big]
=\bigl(\mathcal T_{x\to x'}\vert_B\bigr)
\,\mathbb E\big[\mathcal R^{2}\big]\,
\bigl(\mathcal T_{x\to x'}\vert_B\bigr)^{-1}
=d\,\mathrm{Id}_{B'},
\]
and
$
\| (\mathcal R')^{-1}\|_{\mathrm{op}}=\|\mathcal R^{-1}\|_{\mathrm{op}},
\mathbb E\,\|(\mathcal R')^{-1}\|_{\mathrm{op}}^{2}<\infty.
$
Thus,~\ref{itm:haar-grassmann-sketch-moment} also holds at $x'$.
\end{proof}

\begin{remark}
In Riemannian optimization,
consider a retraction-based iterative method of the form
\[
  x^{\prime}=\mathrm{Retr}_x(\eta) \quad \text{with } \eta\in \mathrm T_x\mathcal M.
\]
A canonical choice of a transport map is the differentiated retraction
\[
  \widetilde{\mathcal T}_{x\to x^{\prime}}\ :=\ D\mathrm{Retr}_x(\eta):\mathrm T_x\mathcal M\to \mathrm T_{x^{\prime}}\mathcal M,
\]
which is convenient to evaluate in most implementations. When the two points are close, the transport map is approximately isometric.
A standard way to enforce an isometry is to take the isometric factor in the polar decomposition: define
\[
  \mathcal T_{x\to x^{\prime}}
  :=\bigl(\widetilde{\mathcal T}_{x\to x^{\prime}}\,\widetilde{\mathcal T}_{x\to x^{\prime}}^{*}\bigr)^{-1/2}\,\widetilde{\mathcal T}_{x\to x^{\prime}},
\]
where \(\widetilde{\mathcal T}_{x\to x^{\prime}}^{*}\) denotes the adjoint with respect to \(\langle\cdot,\cdot\rangle_x\) and \(\langle\cdot,\cdot\rangle_{x^{\prime}}\).
Then \(\mathcal T_{x\to x^{\prime}}\) is an isometry.
In the special case of the exponential map, one may take \(\mathcal T_{x\to x^{\prime}}\) to be parallel transport along the geodesic from \(x\) to \(x^{\prime}\), which is an exact isometry by construction.

Hence, in a practical implementation of an iterative method, one can adopt a lazy refresh strategy. At iteration \(k\), transport the previously constructed \((B_k,\Xi_k,\mathcal F_k)\) from \(\mathrm T_{x_k}\mathcal M\) to \(\mathrm T_{x_{k+1}}\mathcal M\) via the (approximately) isometric map \(\mathcal T_{x_k\to x_{k+1}}\), and use the transported triple \((B_{k+1},\Xi_{k+1},\mathcal F_{k+1})\) as the sketching at the next iterate.
Every few iterations the transported sketching is discarded and a fresh sketching operator is generated at the current point.
\end{remark}

\section{Riemannian Nystr\"om approximation with coordinate representations}
\label{sec:computations}

This section develops coordinate representations and computational formulas for the Riemannian Nystr\"om approximation introduced in the preceding sections.

\subsection{Coordinate representation}

Fix an $\ell$-dimensional subspace $B\subset \mathrm T_x\mathcal M$ with a $g_x$-orthonormal basis $\{b_{x,i}\}_{i=1}^\ell$, and consider another $\ell$-dimensional subspace $\Xi \subset \mathrm T_x\mathcal M$ spanned by vectors $\{\xi_{x,i}\}_{i=1}^\ell$ that are not necessarily $g_x$-orthogonal.
These spanning vectors determine a full-rank map $\mathcal F:B\to\mathrm T_x\mathcal M$ via
\[
  \mathcal F[b_{x,i}]=\xi_{x,i}, \qquad i=1,\dots,\ell.
\]
In this notation, the sketching operators~\eqref{eq:Px-coordfree}--\eqref{eq:Px*-coordfree} take the coordinate form
\begin{equation}
  \label{eq: Px}
\mathcal{P}_{x, B, \Xi}[v]=\sum_{i =1}^{\ell}\left\langle b_{x, i}, v\right\rangle_{x} \, \xi_{x, i},\quad \text{ for all } v \in \mathrm T_{x} \mathcal{M},
\end{equation}
and the adjoint identity
$
  \langle\mathcal{P}_{x, B, \Xi}[v], u\rangle_{x}=\langle\mathcal{P}_{x, B, \Xi}^{*}[u], v\rangle_{x}
$
for all $u,v\in \mathrm T_{x} \mathcal{M}$ immediately yields
\begin{equation}
  \label{eq: Px*}
\mathcal{P}_{x, B, \Xi}^{*}[u]=\sum_{i =1}^{\ell}\left\langle \xi_{x, i}, u\right\rangle_{x} \, b_{x, i},\quad  \text{for all } u \in \mathrm T_{x} \mathcal{M}.
\end{equation}
To pass to matrix notation, extend $\{b_{x,i}\}_{i=1}^\ell$ to a $g_x$-orthonormal basis $\{b_{x,i}\}_{i=1}^d$ of $\mathrm T_x\mathcal M$ and write
\begin{equation}
  \label{eq: pxi}
  \xi_{x, i}=\sum_{j=1}^d \omega_{ji}b_{x, j}, \quad i =1,\dots,\ell,
\end{equation}
where $\mathbf{\Omega}=(\omega_{ji})\in\mathbb R^{d\times\ell}$ is the coefficient matrix.
This representation suggests a convenient way to realize $\Xi$ and $\mathcal F$ under the Haar--Grassmann sketching condition.
After fixing $g_x$-orthonormal frames for $\mathrm T_x\mathcal M$ and $B$, factor
$\mathbf{\Omega}=\mathbf{Q}\,\mathbf{R}$, where $\mathbf{Q}\in\mathbb R^{d\times\ell}$ has orthonormal columns and is Haar-uniform on the Stiefel manifold $\mathrm{St}(d,\ell)$, and $\mathbf{R}\in\mathbb R^{\ell\times\ell}$ is positive definite and independent of $\mathbf{Q}$.
Indeed,
$
  \mathcal F[b_{x,i}]=\xi_{x,i}=\sum_{j=1}^d \mathbf{\Omega}_{ji}\,b_{x,j},\  i=1,\dots,\ell,
$
and hence $\Xi=\operatorname{range}(\mathcal F)=\operatorname{range}(\mathbf{\Omega})=\operatorname{range}(\mathbf{Q})$.
Since $\mathbf{Q}$ is Haar-uniform, this yields~\ref{itm:haar-grassmann-sketch-haar} of \cref{def:haar-grassmann-sketch}.
At the same time, $\mathbf{\Omega}=\mathbf{Q}\mathbf{R}$ corresponds to the polar decomposition $\mathcal F=\mathcal U\mathcal R$, with $\mathcal U:B\to \Xi$ represented by $\mathbf{Q}$ and $\mathcal R:B\to B$ represented by $\mathbf{R}$, thus~\ref{itm:haar-grassmann-sketch-polar} is built in.
Condition~\ref{itm:haar-grassmann-sketch-moment} then reduces to $\mathbb E[\mathbf{R}^2]=d \mathbf{I}_\ell$ and $\mathbb E[\|\mathbf{R}^{-1}\|_2^2]<\infty$.
The special case $\mathbf{R} = \sqrt{d}\, \mathbf{I}_\ell$ gives an isometric sketching.

\subsection{Inversion without regularization}
Many numerical algorithms on Riemannian manifolds require solving a linear system of the form
\begin{equation}
\label{eq:tx-linear-system}
\mathcal{H}_x[u]=b,\qquad u\in \mathrm{T}_x\mathcal M.
\end{equation}
To reduce the computational cost, we approximate the solution using the Nystr\"om pseudoinverse~\eqref{eq:Nystrom-Hessian-inverse}:
\[
u\ :=\ \mathcal{H}_{x,B,\Xi}^{\dagger}[b]
=\mathcal{P}_{x,B,\Xi}\,(\mathcal{P}_{x,B,\Xi}^{*}\mathcal{H}_x\mathcal{P}_{x,B,\Xi})^{\dagger}\,\mathcal{P}_{x,B,\Xi}^{*}[b].
\]
Equivalently, $u$ is the minimum-norm solution of the normal equations in the subspace $B$.
The following proposition provides a formula for evaluating this expression.

\begin{proposition}
  \label{prop:manifold-computational}
  Assume that $\mathcal{P}_{x, B, \Xi}^* \mathcal{H}_x \mathcal{P}_{x, B, \Xi}: B \to B$ has full rank.
  Then
\[  
\mathcal{H}_{x, B, \Xi}^{\dagger}[b]=\sum_{i =1}^{\ell}\left({\mathbf{Q}}^{\dagger} a\right)_{i} \xi_{x, i},
\]
where $\mathbf{Q} \in \mathbb{R}^{\ell \times \ell}$ and ${a} \in \mathbb{R}^{\ell}$ are given by
\begin{equation}
  \label{eq: Q and a}
\mathbf{Q}=\left[\begin{array}{ccc}
\left\langle \xi_{x, 1}, \mathcal{H}_x\left[\xi_{x, 1}\right]\right\rangle_{x} & \cdots & \left\langle \xi_{x, 1}, \mathcal{H}_x\left[\xi_{x, \ell}\right]\right\rangle_{x} \\
\vdots & \ddots & \vdots \\
\left\langle \xi_{x, \ell}, \mathcal{H}_x\left[\xi_{x, 1}\right]\right\rangle_{x} & \cdots & \left\langle \xi_{x, \ell}, \mathcal{H}_x\left[\xi_{x, \ell}\right]\right\rangle_{x}
\end{array}\right], \quad 
{a}=\left[\begin{array}{c}
\left\langle \xi_{x, 1}, b\right\rangle_{x} \\
\vdots \\
\left\langle \xi_{x, \ell}, b\right\rangle_{x}
\end{array}\right].
\end{equation}
\end{proposition}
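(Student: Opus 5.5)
The plan is to compute the three factors of \eqref{eq:Nystrom-Hessian-inverse} directly in the basis $\{b_{x,i}\}_{i=1}^\ell$ of $B$, using the coordinate formulas \eqref{eq: Px} and \eqref{eq: Px*}. First, for $b\in\mathrm T_x\mathcal M$, \eqref{eq: Px*} gives
\[
\mathcal P_{x,B,\Xi}^*[b]=\sum_{i=1}^\ell\langle \xi_{x,i},b\rangle_x\, b_{x,i},
\]
so its coordinate vector in the orthonormal basis $\{b_{x,i}\}$ of $B$ is exactly $a$.

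Next, identify the restricted operator $\mathcal J:=\mathcal P_{x,B,\Xi}^*\mathcal H_x\mathcal P_{x,B,\Xi}:B\to B$ with the matrix $\mathbf Q$. For $v=\sum_k c_k b_{x,k}\in B$, \eqref{eq: Px} gives $\mathcal P_{x,B,\Xi}[v]=\sum_k c_k\xi_{x,k}$. Applying \eqref{eq: Px*} after $\mathcal H_x$ then yields
\[
\mathcal J[v]=\sum_i\Big(\sum_k \langle \xi_{x,i},\mathcal H_x[\xi_{x,k}]\rangle_x c_k\Big) b_{x,i},
\]
that is, $\mathcal J$ is represented by $\mathbf Q$. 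Since the basis is $g_x$-orthonormal, this representation is symmetric (by self-adjointness of $\mathcal H_x$), and the coordinate map $B\to\mathbb R^\ell$ is an isometry.

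The pseudoinverse step is where some care is needed. On the whole tangent space, $\mathcal J$ extended by $\mathcal P^*\mathcal H\mathcal P$ vanishes on $B^\perp$, because $\mathcal P_{x,B,\Xi}=\mathcal F\Pi_B$ annihilates $B^\perp$. Its Moore--Penrose pseudoinverse is therefore the inverse on $B$ extended by zero on $B^\perp$. Under the isometric identification, this corresponds to $\mathbf Q^{\dagger}$, which equals $\mathbf Q^{-1}$ under the full-rank assumption. Because the input $\mathcal P^*[b]$ already lies in $B$, we get
\[
(\mathcal P^*\mathcal H_x\mathcal P)^\dagger\mathcal P^*[b]=\sum_i(\mathbf Q^\dagger a)_i\, b_{x,i}.
\]
Justifying that the pseudoinverse commutes with this isometric coordinate identification (orthogonal-invariance of the Moore--Penrose inverse) is the main point to check.

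Finally, apply $\mathcal P_{x,B,\Xi}$ via \eqref{eq: Px}: $\mathcal P\big[\sum_i c_i b_{x,i}\big]=\sum_i c_i\xi_{x,i}$ with $c=\mathbf Q^\dagger a$. This gives the stated formula.
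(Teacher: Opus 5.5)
Your proposal is correct and follows essentially the same route as the paper. Both arguments represent $\mathcal{J}_{x,B,\Xi}=\mathcal{P}_{x,B,\Xi}^*\mathcal{H}_x\mathcal{P}_{x,B,\Xi}$ by $\mathbf{Q}$ and $\mathcal{P}_{x,B,\Xi}^*[b]$ by $a$ in the orthonormal basis $\{b_{x,i}\}_{i=1}^{\ell}$, solve $\mathbf{Q}c=a$, and lift with $\mathcal{P}_{x,B,\Xi}$. The paper avoids your ``main point to check'' by noting that $v=\mathcal{J}_{x,B,\Xi}^{\dagger}\mathcal{P}_{x,B,\Xi}^*[b]\in B$ solves $\mathcal{J}_{x,B,\Xi}[v]=\mathcal{P}_{x,B,\Xi}^*[b]$ and then reading off the system componentwise; your version is equally valid, since under the full-rank assumption the pseudoinverse is just the inverse on $B$ extended by zero on $B^\perp$, so it commutes with the isometric coordinate map.
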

\begin{proof}
Let $v \in B$ satisfy the linear equation
\begin{equation}
  \label{eq: linear system for v}
(\mathcal{P}_{x, B, \Xi}^* \mathcal{H}_x \mathcal{P}_{x, B, \Xi})[v] = \mathcal{P}_{x, B, \Xi}^* [b].
\end{equation}
Since $\mathcal{P}_{x, B, \Xi}^*[b] \in B = \mathrm{range}(\mathcal{P}_{x, B, \Xi}^* \mathcal{H}_x \mathcal{P}_{x, B, \Xi})$, $v = (\mathcal{P}_{x, B, \Xi}^* \mathcal{H}_x \mathcal{P}_{x, B, \Xi})^{\dagger} \mathcal{P}_{x, B, \Xi}^* [b]$ is a solution to~\eqref{eq: linear system for v}.
By~\eqref{eq: Px} and~\eqref{eq: Px*}, we compute
\begin{equation*} 
\left(\mathcal{P}_{x, B, \Xi}^{*} \mathcal{H}_x \mathcal{P}_{x, B, \Xi}\right)[v] =\sum_{i=1}^{\ell}\left\langle \xi_{x, i}, \sum_{j=1}^{\ell}\left\langle b_{x, j}, v\right\rangle_{x} \mathcal{H}_x\left[\xi_{x, j}\right]\right\rangle_{x} b_{x, i}.
\end{equation*}
Because $\{b_{x, i}\}_{i=1}^{\ell}$ is a $g_x$-orthonormal basis of $B$, equating the $i$-th component of~\eqref{eq: linear system for v} gives
\[
\left\langle \xi_{x, i}, \sum_{j=1}^{\ell}\left\langle b_{x, j}, v\right\rangle_{x} \mathcal{H}_x[\xi_{x, j}]\right\rangle_{x}  = \left\langle \xi_{x, i}, b\right\rangle_{x}.
\]
Setting $c_{i} = \left\langle b_{x, i}, v\right\rangle_{x}$, the above system is equivalent to $\mathbf{Q} c = a$, i.e., $c = \mathbf{Q}^{\dagger} a$.
Hence it holds that
\[
\mathcal{P}_{x, B, \Xi}\left(\mathcal{P}_{x, B, \Xi}^{*} \mathcal{H}_x \mathcal{P}_{x, B, \Xi}\right)^{\dagger} \mathcal{P}_{x, B, \Xi}^{*} [b] 
= \mathcal{P}_{x, B, \Xi}[v]
=\sum_{i=1}^{\ell} \left\langle b_{x, i}, v\right\rangle_{x} \xi_{x, i} 
=\sum_{i=1}^{\ell}  (\mathbf{Q}^{\dagger} a)_{i} \xi_{x, i} .
\]
\end{proof}
\begin{remark}
  \label{remark:manifold-computational-1}
  The proof shows that $u=\mathcal{H}_{x,B,\Xi}^{\dagger}[b]$ can be written as $u=\mathcal{P}_{x,B,\Xi}[v]$, where $v\in B$ solves~\eqref{eq: linear system for v}.
  Thus \cref{prop:manifold-computational} reduces the application of the Nystr\"om pseudoinverse (equivalently, computing the minimum-norm solution supported on $\operatorname{range}(\mathcal{H}_{x,B,\Xi})$) to two main operations: (i) forming the basis vectors $\{b_{x, i}\}_{i =1}^{\ell}$ and $\{\xi_{x, i}\}_{i =1}^{\ell}$; (ii) assembling the $\ell\times\ell$ matrix $\mathbf{Q}$ and vector $a$, then solving the $\ell$-dimensional system $\mathbf{Q} c = a$.
\end{remark}

\subsection{Inversion with regularization}
\label{subsec:regularized-riem-linear-systems}
To mitigate ill conditioning, one often solves the linear system~\eqref{eq:tx-linear-system} in a regularized form. We present an efficient routine based on the Riemannian Nystr\"om approximation.

Let $\mathcal J_{x,B,\Xi}:=\mathcal{P}_{x,B,\Xi}^{*}\,\mathcal{H}_x\,\mathcal{P}_{x,B,\Xi}:B\to B$.
Based on the Nystr\"om pseudoinverse~\eqref{eq:Nystrom-Hessian-inverse}, we define the regularized solution by adding a ridge term $\nu\mathrm{Id}_B$ with $\nu > 0$:
\begin{equation}
\label{eq:regularized-reduced-system}
u = \mathcal{P}_{x,B,\Xi} (\mathcal J_{x,B,\Xi} + \nu \mathrm{Id}_B)^{-1} \mathcal{P}_{x,B,\Xi}^{*} [b].
\end{equation}
The next proposition provides an explicit coordinate formula for this expression.

\begin{proposition}
  \label{prop:manifold-regularized-computational}
  Let $b\in \mathrm{T}_x\mathcal M$, and let $\mathbf{Q}\in\mathbb R^{\ell\times\ell}$ and $a\in\mathbb R^{\ell}$ be defined as in~\eqref{eq: Q and a}.
  Then
  \[
  u=\sum_{i=1}^{\ell}\bigl((\mathbf{Q}+\nu \mathbf{I})^{-1}a\bigr)_i\,\xi_{x,i}.
  \]
\end{proposition}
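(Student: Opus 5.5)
The proof will mirror the argument of \cref{prop:manifold-computational}, with the pseudoinverse replaced by the genuine inverse of $\mathcal J_{x,B,\Xi}+\nu\,\mathrm{Id}_B$. Since $\mathcal J_{x,B,\Xi}=\mathcal{P}_{x,B,\Xi}^{*}\mathcal{H}_x\mathcal{P}_{x,B,\Xi}$ restricted to $B$ is $g_x$-self-adjoint and PSD (because $\mathcal{H}_x\succeq 0$), the operator $\mathcal J_{x,B,\Xi}+\nu\,\mathrm{Id}_B$ is positive definite on $B$ for every $\nu>0$. Hence there is a unique $v\in B$ with
\[
(\mathcal J_{x,B,\Xi}+\nu\,\mathrm{Id}_B)[v]=\mathcal{P}_{x,B,\Xi}^{*}[b],
\]
and by \eqref{eq:regularized-reduced-system} we have $u=\mathcal{P}_{x,B,\Xi}[v]$. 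No full-rank assumption is needed here, in contrast to \cref{prop:manifold-computational}.

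Next, write $v=\sum_{j=1}^{\ell}c_j b_{x,j}$ with $c_j=\langle b_{x,j},v\rangle_x$, using the $g_x$-orthonormality of $\{b_{x,i}\}_{i=1}^{\ell}$. The computation from the proof of \cref{prop:manifold-computational}, based on \eqref{eq: Px} and \eqref{eq: Px*}, gives
\[
\mathcal J_{x,B,\Xi}[v]=\sum_{i=1}^{\ell}\Big(\sum_{j=1}^{\ell}\langle \xi_{x,i},\mathcal{H}_x[\xi_{x,j}]\rangle_x c_j\Big) b_{x,i}=\sum_{i=1}^{\ell}(\mathbf{Q}c)_i\,b_{x,i}.
\]
We also have $\nu v=\sum_i \nu c_i b_{x,i}$ and $\mathcal{P}_{x,B,\Xi}^{*}[b]=\sum_i a_i b_{x,i}$. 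Matching the coefficients of $b_{x,i}$ on both sides yields $(\mathbf{Q}+\nu\mathbf{I})c=a$.

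The matrix $\mathbf{Q}$ is the Gram matrix of the form $\langle\cdot,\mathcal{H}_x[\cdot]\rangle_x$ on $\{\xi_{x,i}\}_{i=1}^{\ell}$, so it is symmetric PSD. Therefore $\mathbf{Q}+\nu\mathbf{I}$ is invertible, and $c=(\mathbf{Q}+\nu\mathbf{I})^{-1}a$.

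Finally, $u=\mathcal{P}_{x,B,\Xi}[v]=\sum_i\langle b_{x,i},v\rangle_x\,\xi_{x,i}=\sum_i c_i\,\xi_{x,i}$, which is the claimed formula.

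The only step requiring care is the identification of $\nu\,\mathrm{Id}_B$ with $\nu\mathbf{I}$ in coordinates. This holds precisely because the basis of $B$ is $g_x$-orthonormal; the $\xi_{x,i}$ need not be orthonormal, and they enter only through $\mathbf{Q}$ and $a$. Beyond this, the argument is routine.
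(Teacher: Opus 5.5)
Your proposal is correct and follows essentially the same route as the paper's proof. You expand $v\in B$ in the $g_x$-orthonormal basis $\{b_{x,i}\}_{i=1}^{\ell}$, use the computation from \cref{prop:manifold-computational} to reduce the regularized system to $(\mathbf{Q}+\nu\mathbf{I})c=a$, invert using $\mathbf{Q}\succeq 0$ and $\nu>0$, and then apply $\mathcal{P}_{x,B,\Xi}$ via \eqref{eq: Px}; your observation that orthonormality of the $b_{x,i}$ is what turns $\nu\,\mathrm{Id}_B$ into $\nu\mathbf{I}$ is exactly the paper's justification.
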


\begin{proof}
Expand $v\in B$ in the basis $\{b_{x,i}\}_{i=1}^{\ell}$ as $v=\sum_{i=1}^{\ell}c_i b_{x,i}$ with $c_i=\langle b_{x,i},v\rangle_x$.
By the same calculation as in the proof of \cref{prop:manifold-computational}, the $i$-th component of $\mathcal J_{x,B,\Xi}[v]$ in this basis equals $\sum_{j=1}^{\ell}Q_{ij}c_j$.
Since $\{b_{x,i}\}_{i=1}^{\ell}$ is $g_x$-orthonormal and $\mathrm{Id}_{B}[v]=v$, the system~\eqref{eq:regularized-reduced-system} reduces to
$
(\mathbf{Q}+\nu \mathbf{I})\,c=a.
$
Because $\mathbf{Q}\succeq 0$ and $\nu>0$, the matrix $\mathbf{Q}+\nu \mathbf{I}$ is positive definite, so $c=(\mathbf{Q}+\nu \mathbf{I})^{-1}a$.
Applying~\eqref{eq: Px} gives $u=\mathcal{P}_{x,B,\Xi}[v]=\sum_{i=1}^{\ell}c_i\,\xi_{x,i}$, which yields the stated formula.
\end{proof}

  We briefly compare two alternative regularization strategies.
  
  \paragraph{Ridge regularization of the lifted Nystr\"om operator}
  One may regularize the full Nystr\"om operator and solve
  $
  (\mathcal{H}_{x, B, \Xi} + \nu \,\mathrm{Id}_x)[u] = b.
  $
  Assuming that $\mathcal J_{x,B,\Xi}$ is invertible, the Sherman--Morrison--Woodbury formula yields
  \[
    u
    =\nu^{-1}b
    -\nu^{-2}\,\mathcal H_x\,\mathcal P_{x,B,\Xi}\,\bigl(\mathcal J_{x,B,\Xi}+\nu^{-1}\mathcal W_{x,B,\Xi}\bigr)^{-1}\,\mathcal P_{x,B,\Xi}^{*}\,\mathcal H_x[b],
  \]
  where
  $
    \mathcal W_{x,B,\Xi}:=\mathcal P_{x,B,\Xi}^{*}\,\mathcal H_x^{2}\,\mathcal P_{x,B,\Xi}.
  $
  The perturbation bound in \cref{prop:manifold-42} also applies to this variant. However, forming $\mathcal W_{x,B,\Xi}$ requires additional applications of $\mathcal H_x^2$, which may be substantially more expensive.
  
  \paragraph{Regularization inside the reduced inverse}
  Alternatively, one may replace only the pseudoinverse $(\mathcal{P}_{x,B,\Xi}^{*}\mathcal{H}_x\mathcal{P}_{x,B,\Xi})^{\dagger}$ in~\eqref{eq:Nystrom-Hessian} by the ridge inverse $(\mathcal J_{x,B,\Xi}+\nu\,\mathrm{Id}_B)^{-1}$.
  In coordinates, writing $u=\sum_{i=1}^{\ell}c_i\xi_{x,i}$, this leads to
  $
    \mathbf{Q}\,(\mathbf{Q}+\nu \mathbf{I})^{-1}\mathbf{Q}\,c=a.
  $
  The coefficient matrix $\mathbf{Q}(\mathbf{Q}+\nu \mathbf{I})^{-1}\mathbf{Q}$ is typically less well conditioned than $\mathbf{Q}+\nu \mathbf{I}$ in~\eqref{eq:regularized-reduced-system}. For this reason, the latter is usually preferable in practice, unless it is essential to preserve the outer $\mathcal H_x$ factors in the Nystr\"om construction.

\section{Riemannian Nystr\"om approximation in optimization}
\label{sec:riemannian-newton} 

This section investigates how the Riemannian Nystr\"om approximation can be used in optimization on manifolds. A Riemannian optimization problem takes the form
\[
\min_{x \in \mathcal{M}} \quad f(x),
\]
where $f \colon \mathcal{M} \to \mathbb{R}$ is a smooth objective function. We begin by introducing a Riemannian subspace method based on the projection operator from Section~\ref{subsec:Nystrom-manifold} and discussing its relationship to existing methods. We then develop a Newton-type method in which the gradient is projected onto a low-dimensional subspace and the Riemannian Hessian is replaced by its Riemannian Nystr\"om approximation, so that the Newton step is computed by solving a reduced system in the sketched tangent space. Finally, we combine this framework with cubic regularization to obtain a practical second-order method on manifolds.

Let $R$ be a retraction operator on the manifold $(\mathcal{M}, g)$.
A Riemannian subspace gradient step is defined by
\[
x^{+}=R_{x}\bigl(-\alpha\, \mathcal{P}_{x,B,\Xi}^{*}[\grad f(x)]\bigr),
\]
where $\alpha>0$ is the step size and $x^{+}$ denotes the next iterate.
When $\ell=d$, $B=\mathrm{T}_{x}\mathcal{M}$, and $\mathcal{P}_{x,B,\Xi}$ is the identity operator for every iterate $x$, this update reduces to the standard Riemannian gradient step
$
x^{+}=R_{x}\bigl(-\alpha\,\grad f(x)\bigr)
$.

  In coordinates, by letting $\xi_{x, i} = b_{x, i}$, the subspace gradient method reduces to Riemannian coordinate descent~\cite{han2024riemannian}.
A single-coordinate step at $x$ is obtained by selecting an index $i \in \{1,\dots,d\}$ and using the projection
$
\mathcal{P}_{x,\{i\}}[u] := \langle u,b_{x,i}\rangle_x b_{x,i}, u \in \mathrm{T}_x \mathcal{M}.
$
The corresponding search direction $\eta \in \mathrm{T}_x\mathcal{M}$ is given by
\[
\eta_j 
=
\begin{cases}
-\langle \operatorname{grad} f(x),b_{x,i}\rangle_x b_{x,i}, & i = j, \\
0, & i \neq j.
\end{cases}
\]
and a coordinate descent step takes the form
$
x^+ = R_{x}(\alpha \eta).
$
At each iteration, $i$ is chosen according to a prescribed rule (for example, cyclic or random). This is exactly the projected scheme with one-dimensional subspaces $B = \operatorname{span}\{b_{x,i}\}$. 

\subsection{Randomized Riemannian Nystr\"om Newton-type method}
\label{sec:ssn-abstract}
We now introduce a randomized Nystr\"om Newton-type method on Riemannian manifolds, in which the inverse of the full Riemannian Hessian is approximated by its Riemannian Nystr\"om approximation. The resulting scheme is suitable for high-dimensional geodesically convex problems, where forming and inverting the full Hessian is prohibitively expensive.

Let $f\colon \mathcal M\to\mathbb R$ be a smooth geodesically convex function. At a point $x\in\mathcal M$, denote the Riemannian Hessian by
\[
\mathcal{H}_x:=\Hess f(x)\colon \mathrm{T}_x\mathcal M\to \mathrm{T}_x\mathcal M.
\] 
The standard Riemannian Newton method computes a search direction $\eta \in \mathrm{T}_x\mathcal M$ by solving
\[
\mathcal{H}_x [\eta] = -\grad f(x).
\]
Replacing $\mathcal{H}_x$ by its low-rank Riemannian Nystr\"om approximation $\mathcal{H}_{x, B, \Xi}$, the direction is computed as the minimum-norm solution induced by the pseudoinverse:
\[
\eta = -\mathcal{H}_{x, B, \Xi}^{\dagger}[\grad f(x)].
\]
Setting $\mathcal{J}_{x, B, \Xi} := \mathcal{P}_{x, B, \Xi}^* \mathcal{H}_x \mathcal{P}_{x, B, \Xi}$ and $h_{x, B, \Xi} := \mathcal{P}_{x, B, \Xi}^* [\grad f(x)]$, \cref{remark:manifold-computational-1} shows that this direction can equivalently be written as
$\eta = \mathcal{P}_{x, B, \Xi} [v]$, where $v \in B$ solves the reduced system $\mathcal{J}_{x, B, \Xi} [v] = - h_{x, B, \Xi}$.
When $\mathcal{J}_{x, B, \Xi}$ is singular, a regularization term $\nu \mathrm{Id}_B$ with $\nu \geq 0$ is added following Section~\ref{subsec:regularized-riem-linear-systems}, leading to the regularized system
\begin{equation}
  \label{eq: Newton step with regularization}
(\mathcal{J}_{x, B, \Xi}+\nu \mathrm{Id}_B) [v] = - h_{x, B, \Xi}.
\end{equation}
In practice, $\nu$ is either set to zero or determined by a model-based rule.

\begin{remark}
  The direction $\eta = -\mathcal{P}_{x, B, \Xi}[v]$ lies in the subspace $\Xi$. 
  For a fixed subspace, it may contain no descent direction.
  This issue is mitigated by randomized sketching where the distribution of the subspace $\Xi$ has support over $\ell$-dimensional subspaces of $\mathrm{T}_x\mathcal M$. Consequently, $\Xi$ explores the tangent space in a probabilistic sense, and with positive probability it captures a nontrivial component of $\grad f(x)$, yielding a descent direction. 
\end{remark}

To obtain a scalable and globally convergent second-order method on manifolds, 
we combine the randomized Riemannian Nystr\"om Newton method with cubic regularization. This method may be viewed as a Riemannian Nystr\"om analogue of stochastic subspace cubic Newton methods in Euclidean spaces~\cite{hanzely2020stochastic}, with the key differences being the Riemannian Nystr\"om approximations for Hessians and the retraction step.
Based on~\eqref{eq: Newton step with regularization}, we consider the cubic subspace model $\phi \colon B\to\mathbb{R}$,
\begin{equation}
\label{eq:cubic-model}
\phi(v)\;=\langle h_{x, B, \Xi}, v\rangle_x + \tfrac{1}{2}\langle \mathcal{J}_{x, B, \Xi} [v], v\rangle_x + \tfrac{\sigma}{6}\,\|v\|_x^{3},\qquad v\in B,
\end{equation}
with a cubic regularization parameter $\sigma>0$.
The first-order optimality condition is equivalent to the Mor\'e--Sorensen-type system~\cite{more1983computing}
\[
\bigl(\mathcal{J}_{x, B, \Xi}+\nu \mathrm{Id}_B\bigr)[v]=-h_{x, B, \Xi},\qquad \nu=\tfrac{\sigma}{2}\|v\|_x,\qquad \nu\ge 0.
\]
Minimizing the unconstrained cubic model is equivalent to satisfying the trust-region KKT conditions. Hence the subproblem~\eqref{eq:cubic-model} can be solved via Riemannian trust-region methods~\cite{boumal2019global}.
The complete randomized Riemannian Nystr\"om cubic Newton method (RRNCN) is summarized in Algorithm~\ref{alg:sscn-riem}.

\begin{algorithm}[H]
  \caption{Randomized Riemannian Nystr\"om cubic Newton method on $(\mathcal{M},g)$}
  \label{alg:sscn-riem}
  \begin{algorithmic}[1]
  \Require manifold $(\mathcal{M},g)$, objective function $f$, second-order retraction operator $R$, initial point $x_0\in\mathcal{M}$, sketch size $\ell < d$, cubic regularization parameter $\sigma_k>0$.
  \State Choose $x_0\in\mathcal{M}$.
  \For{$k=0,1,2,\dots, K-1$}
  \State Compute $\grad f(x_k)\in \mathrm{T}_{x_k}\mathcal{M}$.
  \State Choose subspace $B_k, \Xi_k$, map $\mathcal{F}_k$ satisfying the Haar--Grassmann sketching condition.
  \State Build $h_k:=\mathcal{P}_{x_k, B_k, \Xi_k}^* [\grad f(x_k)]$.
  \State Solve $v_k \in B_k$ from the subproblem:\[
  v_k \in \arg\min_{v\in B_k} \phi_k(v) := \langle h_k, v\rangle_{x_k} + \tfrac{1}{2}\langle \mathcal{J}_{x_k, B_k, \Xi_k} [v], v\rangle_{x_k} + \tfrac{\sigma_k}{6}\|v\|_{x_k}^{3}.
  \]
  \State Set $\eta_k:=\mathcal{P}_{x_k, B_k, \Xi_k}(v_k) \in \mathrm{T}_{x_k}\mathcal M$.
  \State Update $x_{k+1}=R_{x_k}(\eta_k)$.
  \State (Optional) Reject the update if $f(R_{x_k}(\eta_k)) > f(x_k)$.
  \EndFor
  \end{algorithmic}
  \end{algorithm}

  \ifarxivversion

\subsection{Convergence Analysis}
\label{sec:convergence-analysis}
To analyze the convergence properties of the randomized Nystr\"om Riemannian cubic Newton method, we first establish the required regularity conditions on the objective function and the geometric properties of the manifold.
For convenience, we denote $\mathcal{P}_{x_k, B_k, \Xi_k}, \mathcal{P}_{x_k, B_k, \Xi_k}^*$, $h_{x_k, B_k, \Xi_k}$, $\mathcal{J}_{x_k, B_k, \Xi_k}$ by $\mathcal{P}_k$, $\mathcal{P}_k^*$, $h_k$, $\mathcal{J}_k$, respectively.
The convergence analysis is based on the following assumptions.
\begin{assumption}
\label{ass:smooth}
The following conditions hold:
\begin{enumerate}
\item[1.] The function $f$ is twice continuously differentiable and geodesically convex on $\mathcal{M}$.

\item[2.] For every $x\in\mathcal{M}$, the pullback $\hat f:=f\circ R_x$ has an $L_3$-Lipschitz Hessian on $\mathrm{T}_x\mathcal{M}$, i.e., $\|\nabla^{2}\hat f(u)-\nabla^{2}\hat f(0)\|_{\mathrm{op}}\le L_3\|u\|_x$ for all $u\in\mathrm{T}_x\mathcal{M}$, and $\Hess f(x) \preceq L_2 \mathrm{Id}_x$.

\item[3.] The retraction operator $R_x$ is second-order on $\mathrm{T}_x\mathcal{M}$ for all $x\in\mathcal{M}$. 

\item[4.] Assume that the sequence $\{\mathcal{P}_k\}$ is independent and satisfies the Haar--Grassmann sketching condition.
\end{enumerate}
\end{assumption} 

It is straightforward to verify that the Haar--Grassmann sketching condition implies that
$
\mathbb E\big[\Pi_{\Xi_k}[v]\big]=\frac{\ell}{d}\,v
$
for all $v\in \mathrm{T}_{x_k}\mathcal{M}$.
We now present the global complexity theorem for the randomized Riemannian Nystr\"om cubic Newton method.  For large $k$, the second term in the bound is dominant. This shows that the number of iterations to reach accuracy $\epsilon$ is $O(\frac{d}{\ell}\epsilon^{-1})$. The technical lemmas are provided in Appendix~\ref{app:global-complexity}. 
\begin{theorem}\label{thm:global-complexity}
  \textbf{(Global complexity)}
Suppose \cref{ass:smooth} holds.
Let $x^\star\in\arg\min f$ and $r>0$ denote the sublevel-radius bound
\[
r:=\sup\left\{\operatorname{dist}(x,x^\star)\mid f(x)\le f(x_0) \right\}<\infty.
\]
There exists a constant $M>0$ depending only on $L_2, L_3, r$ such that by setting $\sigma_k \geq M$ for all $k$, it holds that for all $k\ge 1$,
\[
\mathbb{E}\left[f(x_k)\right]-f^*\le \frac{f(x_0)-f^*}{\big(1+\tfrac{\ell}{4d}k\big)^{3}}+\frac{9\,L_2 r^{2}}{k}\frac{d}{\ell}+\frac{4.5\,M r^{3}}{k^{2}}\Big(\frac{d}{\ell}\Big)^{2}.
\]
\end{theorem}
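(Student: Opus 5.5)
We would follow the standard analysis of stochastic subspace cubic Newton methods (Hanzely et al.), transplanted to the manifold through the pullback $\hat f_k:=f\circ R_{x_k}$. The plan has three stages.

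\textbf{Stage 1: one-step decrease.} By \cref{ass:smooth}(2)--(3) and second-order retraction, we have $\nabla\hat f_k(0)=\grad f(x_k)$ and $\nabla^2\hat f_k(0)=\mathcal H_{x_k}$. Lipschitz continuity of $\nabla^2\hat f_k$ then gives, for every $\eta\in\mathrm T_{x_k}\mathcal M$,
\[
f(R_{x_k}(\eta))\le f(x_k)+\langle\grad f(x_k),\eta\rangle_{x_k}+\tfrac12\langle\mathcal H_{x_k}[\eta],\eta\rangle_{x_k}+\tfrac{L_3}{6}\|\eta\|_{x_k}^3 .
\]
Substitute $\eta=\mathcal P_k[v]$. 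The linear and quadratic terms become $\langle h_k,v\rangle$ and $\tfrac12\langle\mathcal J_k v,v\rangle$. The cubic term is bounded by $\tfrac{L_3}{6}\|\mathcal F_k\|_{\mathrm{op}}^3\|v\|^3$. Hence, provided $\sigma_k\ge M$ with $M$ absorbing $L_3$ and the scaling of $\mathcal F_k$ (e.g.\ $\sqrt d$ in the isometric case), we get $f(x_{k+1})\le f(x_k)+\min_{v\in B_k}\phi_k(v)$. The optional rejection step keeps the iterates in the sublevel set, so $\operatorname{dist}(x_k,x^\star)\le r$ for all $k$.

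\textbf{Stage 2: bound the model minimum with a test point.} Let $\eta^\star:=R_{x_k}^{-1}(x^\star)$, or the Riemannian logarithm, so that $\|\eta^\star\|\lesssim r$. For $t\in[0,1]$ take $v=t\,\mathcal F_k^{-1}\Pi_{\Xi_k}\eta^\star$, so that $\mathcal P_k v=t\,\Pi_{\Xi_k}\eta^\star$. The three terms are then handled as follows.
\begin{itemize}
\item[] The linear term uses $\mathbb E[\Pi_{\Xi_k}]=\tfrac{\ell}{d}\mathrm{Id}$, giving $t\tfrac{\ell}{d}\langle\grad f,\eta^\star\rangle$.
\item[] The quadratic term uses $\mathcal H_{x_k}\preceq L_2\mathrm{Id}$ and $\mathbb E\|\Pi_{\Xi_k}\eta^\star\|^2=\tfrac{\ell}{d}\|\eta^\star\|^2$, giving at most $\tfrac{t^2}{2}L_2\tfrac{\ell}{d}r^2$.
\item[] The cubic term uses $\|v\|\le\|\mathcal R^{-1}\|\,\|\eta^\star\|$ and the moment assumptions of \cref{def:haar-grassmann-sketch}. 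This is absorbed by the choice of $M$ in terms of $r$; alternatively one can use $\mathcal F_k^*$ rather than $\mathcal F_k^{-1}$ to avoid inverse moments.
\end{itemize}
Geodesic convexity gives $\langle\grad f(x_k),\eta^\star\rangle\le f^\star-f(x_k)$, with a correction from retraction versus exponential map that is absorbed into $M$. Writing $\delta_k:=\mathbb E f(x_k)-f^\star$, we obtain
\[
\delta_{k+1}\le \min_{t\in[0,1]}\Bigl\{\bigl(1-t\tfrac{\ell}{d}\bigr)\delta_k+\tfrac{\ell}{d}\bigl(\tfrac{t^2}{2}L_2r^2+\tfrac{t^3}{6}Mr^3\bigr)\Bigr\}.
\]

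\textbf{Stage 3: solve the recursion.} Choose $t_k=\tfrac{3}{k\ell/d+3}$, the standard cubic-Newton choice. Then unroll the recursion by induction, or telescope with weights $A_k\sim(k\ell/d)^3$, as in Nesterov--Polyak and Hanzely et al. The three terms of the bound emerge as follows: $\bigl(1+\tfrac{\ell}{4d}k\bigr)^{-3}$ comes from the initial phase where $t=1$; $9L_2r^2\tfrac{d}{\ell}k^{-1}$ comes from the quadratic term; and $4.5Mr^3\bigl(\tfrac{d}{\ell}\bigr)^2k^{-2}$ comes from the cubic term.

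The main obstacle is Stage 2. It requires controlling the cubic term in expectation under only the Haar--Grassmann moments, and relating the retraction-based $\eta^\star$ to geodesic convexity so that all constants depend only on $L_2,L_3,r$. I would first try the isometric case $\mathcal R=\sqrt d\,\mathrm{Id}$, where $\mathcal F_k^{-1}\Pi_{\Xi_k}$ is a scaled isometry, and then extend via $\rho_{\mathrm{op}}$.
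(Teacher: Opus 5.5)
This is essentially the paper's proof. The paper also bounds $f\circ R_{x_k}$ along $\mathcal P_k[v]$ by the cubic model. It tests the model at the $v\in B_k$ with $\mathcal P_k[v]=\Pi_{\Xi_k}[w]$, $w=\alpha_k\Exp_{x_k}^{-1}(x^\star)$; this is phrased as a one-step lemma with comparison point $y_k=\Exp_{x_k}(\alpha_k\Exp_{x_k}^{-1}(x^\star))$, so $\alpha_k$ is your $t$. It then uses $\mathbb E[\Pi_{\Xi_k}]=\tfrac{\ell}{d}\mathrm{Id}$ and geodesic convexity, and telescopes with $a_k=k^2$, $A_0=\tfrac43(d/\ell)^3$, $\alpha_k=\tfrac{d}{\ell}a_{k+1}/A_{k+1}$.

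\textbf{The gap: the cubic term.} Contrary to your claim, it cannot be absorbed into the choice of $M$.
\begin{itemize}
\item Your Stage~1 needs $\sigma_k\ge L_3\|\mathcal R_k\|_{\mathrm{op}}^3$ almost surely. Your test point gives $\sigma_k\|v\|_{x_k}^3\le\sigma_k t^3\|\mathcal R_k^{-1}\|_{\mathrm{op}}^3\|\Pi_{\Xi_k}[\eta^\star]\|_{x_k}^3$.
\item Since $\sigma_k$ multiplies this term, enlarging $M$ does not help. With $\sigma_k=M$ you are left with $M\,\mathbb E[\|\mathcal R^{-1}\|_{\mathrm{op}}^3]$ in place of $M$, i.e.\ a dependence on the conditioning of $\mathcal R$.
\item The Haar--Grassmann condition allows unbounded $\|\mathcal R\|_{\mathrm{op}}$ (Gaussian sketching) and assumes only a second inverse moment. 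So going through $\rho_{\mathrm{op}}$ cannot close this, and switching to $\mathcal F_k^*$ only trades inverse moments for higher moments of $\|\mathcal R\|_{\mathrm{op}}$.
\end{itemize}
The argument does close with the stated constants when $\|\mathcal R\|_{\mathrm{op}}$ is a.s.\ bounded and $\mathcal R\succeq\mathrm{Id}_B$. In particular it closes in the isometric case $\mathcal R\equiv\sqrt d\,\mathrm{Id}_B$, where $M=L_3d^{3/2}$ and $\sigma_k\|v\|_{x_k}^3=L_3t^3\|\Pi_{\Xi_k}[\eta^\star]\|_{x_k}^3$ for $\sigma_k=M$. Note that this $M$ is not a function of $L_2,L_3,r$ alone. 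The paper's proof has exactly the same restriction without saying so. Its one-step lemma writes $\tfrac{\sigma}{6}\|\Pi_{\Xi}[w]\|_x^3$ where $\tfrac{\sigma}{6}\|v\|_x^3$ belongs, and it treats $M_\rho=\widehat M_\rho\|\mathcal R\|_{\mathrm{op}}^3$ as a deterministic constant.

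\textbf{The rest of your plan.} Restricted to that case, your outline is a complete proof and in places cleaner than the paper's.
\begin{itemize}
\item With $\eta^\star=\Exp_{x_k}^{-1}(x^\star)$ there is no retraction-versus-exponential correction. Your Stage~1 bound holds for every tangent vector, and convexity is used only along the geodesic.
\item $f(x_{k+1})\le f(x_k)+\phi_k(0)=f(x_k)$ gives monotonicity, hence $\operatorname{dist}(x_k,x^\star)\le r$, without the rejection step.
\item Your direct Taylor bound with constant $L_3/6$ avoids the paper's radius-dependent $\widehat M_\rho$ and the accompanying bound on $\|v\|$.
\item Your Stage~3 choice works and is slightly sharper. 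With $s=\ell/d$, $t_k=3/(ks+3)$ and $A_k=(k-1+3/s)^3$ one has $A_{k+1}(1-st_k)\le A_k$. Telescoping gives $\delta_k\le\delta_0(1+\tfrac{ks}{3-s})^{-3}+\tfrac{4.5L_2r^2}{ks}+\tfrac{4.5Mr^3}{k^2s^2}$.
\item By contrast, the paper's last step $A_0/(A_0+k^3/3)\le(1+\tfrac{\ell}{4d}k)^{-3}$ is false for $k\ell/d$ of order one ($0.8$ versus $0.512$ at $k\ell/d=1$). The stated bound is still true in that range, because the middle term then already exceeds $f(x_0)-f^*\le\tfrac{L_2}{2}r^2$.
\end{itemize}
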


The next theorem describes the local behavior of the method and establishes a linear convergence rate under strong geodesic convexity and a transported resampling assumption. The corresponding technical lemmas are collected in Appendix~\ref{app:local-convergence-rates}. Throughout the local analysis, we fix a local minimizer $x^\star$ of $f$.
\begin{assumption}
  (\textbf{strong geodesic convexity})
  \label{ass:strong-geo-convexity}
There exists a geodesically convex neighborhood $U$ of $x^\star$ such that $\Hess f(x)\succeq \mu \,\mathrm{Id}_x$ for all $x\in U$.
\end{assumption}

For each $x\in U$, let
  $
  \mathcal T_{x^\star\to x}:\mathrm{T}_{x^\star}\mathcal{M}\to \mathrm{T}_x\mathcal{M}
  $
  denote an isometric vector transport (e.g., the parallel transport along the unique minimizing geodesic from $x^\star$ to $x$), and write $\mathcal T_{x\to x^\star}:=(\mathcal T_{x^\star\to x})^{-1}$.
  Define the pullback of $\mathcal{P}_k$ to the reference tangent space $\mathrm{T}_{x^\star}\mathcal{M}$ by
  \[
  \widetilde{\mathcal P}_k
  :=
  \mathcal T_{x_k\to x^\star}\,\mathcal P_k\,\mathcal T_{x^\star\to x_k}.
  \]

\begin{assumption}
  \label{ass:local-transported-resampling}
  Assume that $\mathcal P_{x^\star,B^\star,\Xi^\star}$ satisfies the Haar--Grassmann sketching condition at $x^\star$.
  Conditional on $x_k$, the operator $\widetilde{\mathcal P}_k$ is independent of the past and has the same distribution as 
  $
  \mathcal P_{x^\star,B^\star,\Xi^\star}
  $ at $x^\star$.
  \end{assumption}

  \begin{theorem}
    \label{thm:explicit-rate-nystrom}
    (\textbf{Local convergence})
    Suppose \cref{ass:smooth}, \cref{ass:strong-geo-convexity}, and \cref{ass:local-transported-resampling} hold. Let $\rho_{\mathrm{HS}},\rho_{\mathrm{op}}$ and $C_0$ be as in \cref{prop:haar-grassmann-pinv-moments} (evaluated at $x^\star$). Set the constants
    \[
    C_p=1+2\rho_{\mathrm{HS}}+\frac{4C_0\rho_{\mathrm{op}}}{p}\,\mathrm{sr}_p(\mathcal{H}^\star),
    \quad
    \kappa=\frac{\lambda_1(\mathcal{H}^\star)}{\lambda_d(\mathcal{H}^\star)},
    \quad
    \overline{\zeta}=\frac{1}{2\bigl(1+2\kappa C_p\,\lambda_p(\mathcal{H}^\star)/\lambda_d(\mathcal{H}^\star)\bigr)}.
    \]
    For any $\delta\in(0,1)$ there exists a neighborhood $V$ of $x^\star$ such that if $x_0\in V$, then
    \[
    \mathbb E\bigl[f(x_k)-f(x^\star)\bigr]\le\bigl(1-(1-\delta)\overline{\zeta}\bigr)^k\bigl(f(x_0)-f(x^\star)\bigr), \text{ for all }k\ge 0.
    \]
  \end{theorem}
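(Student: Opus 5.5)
Near $x^\star$ I would prove a one-step expected decrease
\[
\mathbb E\bigl[f(x_{k+1})-f^\star \mid x_k\bigr]\le\bigl(1-(1-\delta)\overline{\zeta}\bigr)\bigl(f(x_k)-f^\star\bigr),
\]
and then iterate it with the tower property. Invariance of $V$ is part of the job: I take $V\subset U$ to be a small sublevel set of $f$. With the optional rejection step, $f(x_{k+1})\le f(x_k)$, so all iterates remain in $V$.

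\textbf{Step 1: compare the exact step with an idealized Nystr\"om step.} Using the cubic model~\eqref{eq:cubic-model}, the $L_3$-Lipschitz pullback Hessian, second-order retraction and $\sigma_k\ge M$, I would show the model is an upper bound:
\[
f(R_{x_k}(\eta))\le f(x_k)+\phi_k(v),\qquad \eta=\mathcal P_k[v].
\]
Here one has to check that $\|\eta\|$ is controlled by $\|v\|$, which follows from the moment bounds on $\mathcal R$. Hence $f(x_{k+1})-f(x_k)\le\min_v\phi_k(v)\le\phi_k(t\bar v)$ for any test vector. I choose the regularized Newton direction $\bar v=-(\mathcal J_k+\nu\mathrm{Id})^{-1}h_k$ scaled by $t$. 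The quadratic part then gives a decrease of order
\[
\tfrac12\bigl\langle \grad f,\ \mathcal P_k(\mathcal J_k+\nu\mathrm{Id})^{-1}\mathcal P_k^*\grad f\bigr\rangle .
\]
The cubic term is $O(\|\grad f\|^3)$, and the neighborhood shrinks it to a $\delta$-fraction of the quadratic term.

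\textbf{Step 2: lower-bound the expected quadratic decrease.} Since $\mathcal P_k(\mathcal J_k+\nu)^{-1}\mathcal P_k^*$ relates to $\mathcal H_{x,B,\Xi}$ via~\eqref{eq:Nystrom-Hessian-inverse}, the natural quantity is $\langle g,\mathcal H_{k}^{\dagger}$-type operator$\,g\rangle$. I would bound it from below by
\[
\langle g,(\mathcal H_{k,B,\Xi}+\nu\mathrm{Id})^{-1}g\rangle
\ge\langle g,(\mathcal H_k+\nu\mathrm{Id})^{-1}g\rangle-\|\text{difference}\|_{\mathrm{op}}\|g\|^2 ,
\]
and control the difference with \cref{prop:manifold-31} and \cref{prop:manifold-42}. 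I would take $\nu\simeq\lambda_d$ (or use the pseudoinverse directly) to get a factor $\lambda_1^{-1}$ from the first term. This yields
\[
\text{expected decrease}\gtrsim\frac{\|g\|^2}{\lambda_1\bigl(1+2\kappa C_p\lambda_p/\lambda_d\bigr)},
\]
which matches the form of $\overline\zeta$.

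\textbf{Step 3: handle the evaluation point and close the recursion.} The expectations above must be taken at $x^\star$, because $C_p$ is evaluated there. I would pull everything back with $\mathcal T_{x_k\to x^\star}$. By \cref{ass:local-transported-resampling}, $\widetilde{\mathcal P}_k$ has the law at $x^\star$, and \cref{thm:transport-closure-iff-haar} keeps the Haar--Grassmann structure. The transported Hessian $\mathcal T\mathcal H_{x_k}\mathcal T^{-1}$ is within $O(\operatorname{dist}(x_k,x^\star))$ of $\mathcal H^\star$ by continuity. Monotonicity (\cref{prop: monotone}) and continuity of eigenvalues absorb this into the factor $1-\delta$. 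Finally, strong convexity on $U$ gives a Polyak--\L ojasiewicz inequality $\|\grad f\|^2\ge2\mu(f-f^\star)$. Combining it with $\lambda_1\approx\lambda_1(\mathcal H^\star)$ and $\mu\approx\lambda_d(\mathcal H^\star)$ near $x^\star$ converts the decrease into $(1-\delta)\overline\zeta(f-f^\star)$; the constant factors should cancel against the $\tfrac12$ in $\overline\zeta$.

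\textbf{Main obstacle.} The delicate point is Step 2. There the expectation of a quadratic form in a random pseudoinverse must be bounded below using only an upper bound on $\mathbb E\|\mathcal H-\mathcal H_{B,\Xi}\|_{\mathrm{op}}$. I would first try Jensen-free linearity: expand the resolvent difference and take expectation termwise, using $\mathbb E\|\cdot\|_{\mathrm{op}}$ from \cref{prop:manifold-42}. It is also necessary to reconcile the reduced-space regularization with the lifted one.
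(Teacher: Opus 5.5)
\textbf{The gap is in Step 2.} After Step 1, the decrease is governed by the rank-$\ell$ operator $\mathcal P_k(\mathcal J_k+\nu\,\mathrm{Id}_{B_k})^{-1}\mathcal P_k^*$. Its range is $\Xi_k$ and it vanishes on $\Xi_k^\perp$. It is not the lifted ridge inverse $(\mathcal H_{x_k,B_k,\Xi_k}+\nu\,\mathrm{Id})^{-1}$, which is invertible; \eqref{eq:Nystrom-Hessian-inverse} concerns only the unregularized pseudoinverse.

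The reconciliation you postpone cannot be done. Testing on $\Xi_k^\perp$ shows the two operators differ pathwise by at least $(\lambda_1+\nu)^{-1}$ in operator norm. Worse, your displayed inequality holds with zero error term, since \cref{lem:riem-21}\,(i) gives $(\mathcal H_{x,B,\Xi}+\nu\,\mathrm{Id})^{-1}\succeq(\mathcal H_x+\nu\,\mathrm{Id})^{-1}$. If it governed the decrease, you would get a full-Newton decrease $\tfrac12\|g\|^2/(\lambda_1+\nu)$ independent of $\ell/d$. That is false for steps confined to $\Xi_k$: for $f=\tfrac12\|x\|^2$ on $\mathbb R^d$, no step in $\Xi_k$ decreases $f$ by more than $\tfrac12\|\Pi_{\Xi_k}x\|^2$, whose expectation is $\tfrac{\ell}{2d}\|x\|^2$. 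So \cref{prop:manifold-31} and \cref{prop:manifold-42} do not control the quantity you need.

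Two smaller points. With $\nu\simeq\lambda_d$, the bound $\tfrac{1}{\lambda_1+\nu}-\tfrac{C_p\lambda_p}{\nu(\lambda_d+\nu)}$ is negative, since $C_p>1$. And even granting your display, PL turns it into a constant multiple of $\overline{\zeta}/\kappa$, so the factors do not cancel.

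\textbf{Comparison with the paper.} The paper has the same overall structure, but with different tools. \cref{lem:manifold-descent-lemma} uses the exact first-order condition, with a shift $\sqrt{\sigma_k/2}\,\|h_k\|^{1/2}$ that vanishes near $x^\star$. \cref{prop:self-concordant} replaces PL by $f-f^\star\le\tfrac{1+\gamma}{2}\langle g,(\mathcal H^\star)^{-1}g\rangle$. Together these give the rate $\zeta=\lambda_{\min}\bigl(\mathbb E[\Pi_{(\mathcal H^\star)^{1/2}\Xi^\star}]\bigr)$ of \cref{thm:riem-local-exp}.

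The paper then bounds $\zeta\ge\overline{\zeta}$ via \cref{prop:manifold-42}, and that step makes the same conflation as yours. It treats $\Pi_{\Xi^\star}(\Pi_{\Xi^\star}\mathcal H^\star\Pi_{\Xi^\star}+\tfrac{\nu}{d}\mathrm{Id}_{\Xi^\star})^{-1}\Pi_{\Xi^\star}$ as $(\mathcal H_{x^\star,B^\star,\Xi^\star}+\tfrac{\nu}{d}\mathrm{Id}_{x^\star})^{-1}$. For $\mathcal H^\star=\mathrm{Id}$ with an isometric sketch, one computes $\lambda_{\min}(S_\nu)=\tfrac{\ell/d}{1+2C_p}<\overline{\zeta}$ at the paper's $\nu$ whenever $\ell<d/2$. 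So do not repair Step 2 that way.

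\textbf{A repair that needs no Nystr\"om error bound.} On $B_k$ you have $\mathcal J_k\preceq\lambda_1\mathcal P_k^*\mathcal P_k$. Hence $\mathcal P_k(\mathcal J_k+\nu\,\mathrm{Id}_{B_k})^{-1}\mathcal P_k^*\succeq\mathcal P_k(\lambda_1\mathcal P_k^*\mathcal P_k+\nu\,\mathrm{Id}_{B_k})^{-1}\mathcal P_k^*\uparrow\lambda_1^{-1}\Pi_{\Xi_k}$ as $\nu\downarrow0$. With $\mathbb E[\Pi_{\Xi_k}]=\tfrac{\ell}{d}\mathrm{Id}$, and after letting the shift vanish as in \cref{thm:riem-local-exp}, the expected decrease is at least about $\tfrac{\ell}{2d\lambda_1}\|g\|^2$. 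Your PL step then gives a contraction close to $\tfrac{\ell}{d\kappa}$. This exceeds $\overline{\zeta}$ by a wide margin, because $\overline{\zeta}<\tfrac{\lambda_d}{4\kappa C_p\lambda_p}\le\tfrac{p}{16C_0\rho_{\mathrm{op}}\kappa(d-p+1)}$. Here you use $\mathrm{sr}_p\lambda_p\ge(d-p+1)\lambda_d$, $\rho_{\mathrm{op}}\ge1$ (by Jensen from $\mathbb E[\mathcal R^2]=d\,\mathrm{Id}_B$), $d-p+1\ge\tfrac{d+1}{2}$ and $\ell\ge p$.
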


  \else
  By combining the proof strategy for stochastic subspace cubic Newton methods in the Euclidean setting~\cite{hanzely2020stochastic} with the convergence analysis of cubic-regularized Newton methods on Riemannian manifolds~\cite{zhang2018cubic}, one can derive convergence guarantees for the proposed method. Under standard assumptions, including geodesic convexity, Lipschitz continuity of the Riemannian Hessian, the use of a second-order retraction, and i.i.d.\ samples from the Haar--Grassmann sketching distribution, one obtains a global complexity bound of order $\mathcal O\!\bigl(\tfrac{d}{\ell}\,\epsilon^{-1}\bigr)$ for computing an $\epsilon$-optimal point. 
  Moreover, under a local strong geodesic convexity condition together with a transported resampling assumption on the sketching samples, one can further establish a local linear convergence rate, analogous to that in the Euclidean setting~\cite{hanzely2020stochastic}. We omit these arguments here and refer the reader to the arXiv version for the precise statements and complete proofs.
  \fi

\section{Numerical experiments} \label{sec:experiments}
In this section, we report numerical experiments to demonstrate the approximation properties and the effectiveness of the proposed optimization method. All experiments are performed on a MacBook Pro equipped with an Apple M2 chip with 8 CPU cores (4 performance cores and 4 efficiency cores) and 16\,GB of unified memory, running Python (Release 3.9.12) under macOS 26.3. The code that produced the results is available at https://github.com/nht2018/RiemannianNystrom.

\subsection{Principal geodesic analysis}
\label{app:pga}
Principal geodesic analysis (PGA) generalizes principal component analysis (PCA) to Riemannian manifolds by linearizing the data at a reference point and then performing PCA in the corresponding tangent space~\cite{fletcher2004principal,Pennec2006}. Specifically, given a dataset (training set) $\{y_i\}_{i=1}^N\subset\mathcal M$, one first computes a Fr\'{e}chet mean
$
  \mu\in\arg\min_{x\in\mathcal M}\ \frac{1}{N}\sum_{i=1}^N \operatorname{dist}(x,y_i)^2.
$
The samples are mapped to the tangent space $\mathrm{T}_\mu\mathcal M$ using the logarithm map
$
  v_i := \mathrm{Log}_\mu(y_i) \in \mathrm{T}_\mu\mathcal M, i=1,\dots,N.
$
PGA then extracts the leading eigenpairs of the empirical covariance operator 
\begin{equation}
  \mathcal{C}_\mu[u] := \frac{1}{N} \sum_{i=1}^N \langle v_i, u \rangle_\mu\, v_i, \quad u\in \mathrm{T}_\mu\mathcal M,
\end{equation}
as
$
  \mathcal C_\mu[\widehat u_k]=\widehat\lambda_k\,\widehat u_k, k=1,\dots,K
$. 
The resulting PGA scores
\begin{equation}
  \label{eq:pga-scores}
  \widehat a_{ik} := \langle \widehat u_k, v_i\rangle_\mu\in\mathbb R,\qquad \widehat a_i:=(\widehat a_{i1},\dots,\widehat a_{iK})^\top\in\mathbb R^K,
\end{equation}
provide an intrinsic, low-dimensional feature representation of manifold-valued data.

In our experiment, let $\mathcal{M}=\mathbb{S}_{++}^n$ be a SPD manifold equipped with the log--Euclidean metric and let $\{X_i\}_{i=1}^N\subset\mathcal{M}$ be samples with class labels $y_i\in\{1,\dots,G\}$. The Fr\'echet mean is $\mu=\exp\!\big(\tfrac1N\sum_{i=1}^N \log X_i\big)$. Each sample is mapped to the tangent space at $\mu$ via the logarithm map, yielding $v_i:=\log_\mu(X_i)=\log(X_i)-\log(\mu)\in \mathrm{T}_\mu\mathcal{M}$. 
The empirical covariance operator at $\mu$ is the self-adjoint, positive semidefinite operator $\mathcal{C}_\mu:\mathrm{T}_\mu\mathcal{M}\to \mathrm{T}_\mu\mathcal{M}$ defined by $\mathcal{C}_\mu[u]:=\tfrac1N\sum_{i=1}^N\langle v_i,u\rangle_\mu\,v_i$. For implementation, it is convenient to introduce the operator $\mathcal{V}:\mathbb{R}^N\to \mathrm{T}_\mu\mathcal{M}$, $\mathcal{V}e_i=v_i$, whose adjoint satisfies $(\mathcal{V}^\ast u)_i=\langle v_i,u\rangle_\mu$; then $\mathcal{C}_\mu=\tfrac1N\,\mathcal{V}\mathcal{V}^\ast$. 
In a fixed coordinate isometry $\mathrm{T}_\mu\mathcal{M}\simeq\mathbb{R}^d$ (e.g., $\mathrm{vech}$ under an orthonormal frame), this reduces to the factor form $C=\tfrac1N VV^\top$, enabling applications of $\mathcal{C}_\mu$ through operator actions $\mathcal{C}_\mu[u]=\tfrac1N\,\mathcal{V}(\mathcal{V}^\ast u)$ without explicitly forming a dense $d\times d$ matrix.

To form a Riemannian Nystr\"om approximation of $\mathcal{C}_\mu$, choose a subspace $B=\mathrm{span}\{b_1,\dots,b_\ell\}\subset \mathrm{T}_\mu\mathcal{M}$, and define the range operator $\mathcal{Y}:\mathbb{R}^\ell\to \mathrm{T}_\mu\mathcal{M}$ by $\mathcal{Y}e_j:=\mathcal{C}_\mu[b_j]$. The associated core matrix is $S\in\mathbb{R}^{\ell\times \ell}$ with entries $S_{ij}:=\langle b_i,\mathcal{C}_\mu[b_j]\rangle_\mu$, equivalently $S=B^\ast\mathcal{C}_\mu B$ in the coefficient representation. Using the factorization $\mathcal{C}_\mu=\tfrac1N\,\mathcal{V}\mathcal{V}^\ast$, each range vector admits the computable form $\mathcal{Y}e_j=\mathcal{C}_\mu[b_j]=\tfrac1N\,\mathcal{V}(\mathcal{V}^\ast b_j)$, so $\mathcal{Y}$ can be assembled from operator applications without forming $\mathcal{C}_\mu$. The Riemannian Nystr\"om approximation of the covariance operator is then defined by
$
\mathcal{C}_{\mu,B}:=\mathcal{Y}\,S^\dagger\,\mathcal{Y}^\ast,
$
and its action is evaluated as $\mathcal{C}_{\mu,B}[u]=\mathcal{Y}\,S^\dagger(\mathcal{Y}^\ast u)$ for $u\in \mathrm{T}_\mu\mathcal{M}$, storing only the range vectors $\{\mathcal{Y}e_j\}_{j=1}^\ell$ and the small core pseudoinverse $S^\dagger$. For either covariance operator $\mathcal{C}$ (i.e., $\mathcal{C}=\mathcal{C}_\mu$ or $\mathcal{C}=\mathcal{C}_{\mu,B}$), the top-$K$ eigenvectors are computed and the PGA scores are defined according to~\eqref{eq:pga-scores}.

The resulting approximate scores serve as features for downstream analyses, including Hotelling's $T^2$ testing and downstream machine learning models.
The Hotelling $T^2$ statistic is defined as follows.
Consider a binary partition. Let $\mathcal{G}_0=\{i: y_i=0\}$ and $\mathcal{G}_1=\{i: y_i=1\}$, and compute group means in score space as $\bar a_g = \frac{1}{n_g}\sum_{i\in\mathcal{G}_g} a_i \in \mathbb{R}^K$. Using the sample covariance within each group $S_g = \frac{1}{n_g-1}\sum_{i\in\mathcal{G}_g} (a_i-\bar a_g)(a_i-\bar a_g)^\top \in \mathbb{R}^{K\times K}$,
the pooled covariance is given by
$ 
S_p = \frac{(n_0-1)S_0 + (n_1-1)S_1}{n_0+n_1-2}.
$
The Hotelling $T^2$ statistic~\cite{Hotelling1931}
\[
T^2 = \frac{n_0 n_1}{n_0+n_1}\,(\bar a_0-\bar a_1)^\top S_p^{-1}(\bar a_0-\bar a_1) .
\]
gives a single scalar quantifying separation between the two groups.
Larger values of $T^2$ indicate that the two group mean score vectors are farther apart relative to the within-group variability.

This PGA pipeline is evaluated on the HDM05 dataset\footnote{We use the preprocessed SPD covariance-matrix version in the supplementary material of~\cite{brooks2019riemannian}, available at \url{https://www.dropbox.com/s/dfnlx2bnyh3kjwy/data.zip?dl=0}.}, which contains 2,086 SPD matrices of size $93\times 93$ (tangent-space dimension $d=4{,}371$) across 117 classes. The dataset is split into training and testing sets in an 80-20 ratio. To mitigate numerical issues arising from ill-conditioned matrices, eigenvalues are floored at $10^{-6}$. We fix the number of principal components to $k=20$ and compare Nystr\"om sketch sizes $\ell\in\{20,40,80\}$ against exact PGA. Table~\ref{tab:pga-hdm05} reports multiclass classification accuracy (multinomial logistic regression, SVM, and MLP) on the resulting score features, the associated memory cost, and the median pairwise Hotelling $T^2$ statistic on HDM05.

\begin{table}[H]
\caption{HDM05 PGA results at $k=20$. Multiclass classification reports accuracy from multinomial logistic regression, SVM, and MLP on PGA scores. Memory cost reports measured RSS increases for the covariance operator and Nystr\"om sketching approximations at $\ell\in\{20,40,80\}$, with percentages relative to the exact resident set size (RSS) increase. Median pairwise Hotelling $T^2$ is reported across all class pairs.}
\label{tab:pga-hdm05}
\centering
\small
\setlength{\tabcolsep}{4pt}
\begin{tabular}{l r@{.}l r@{.}l r@{.}l r@{.}l r@{.}l}
\hline
Method & \multicolumn{2}{c}{Acc (logreg)} & \multicolumn{2}{c}{Acc (SVM)} & \multicolumn{2}{c}{Acc (MLP)} & \multicolumn{2}{c}{Memory (MiB)} & \multicolumn{2}{c}{$T^2$ ($\times 10^3$)} \\
\hline
Exact & 0&683 & 0&770 & 0&688 & 145&77 (100\%) & 5&804 \\
Nystr\"om ($\ell=20$) & 0&679 & 0&741 & 0&640 & 6&27 (4.30\%) & 5&062 \\
Nystr\"om ($\ell=40$) & 0&686 & 0&772 & 0&652 & 10&50 (7.20\%) & 5&354 \\
Nystr\"om ($\ell=80$) & 0&691 & 0&763 & 0&676 & 14&05 (9.64\%) & 5&615 \\
\hline
\end{tabular}
\end{table}

The Nystr\"om sketching approximations preserve close downstream performance on this dataset. Accuracies of the machine learning models and median pairwise Hotelling's $T^2$ values remain close to the exact PGA baseline across all tested sketch sizes $\ell$. At the same time, they reduce memory usage, requiring only 4.30\%--9.64\% of the exact operator's RSS increase while maintaining comparable statistical quality.
\subsection{Riemannian Nystr\"om approximation in optimization}
In this section, we investigate the empirical performance of the proposed Riemannian Nystr\"om Newton method on two representative manifold optimization problems.
The Riemannian cubic Newton method~\cite{zhang2018cubic} serves as a baseline and is labeled ``cubic Newton'' in the figures.
In all experiments, the regularization parameter is updated according to a standard adaptive cubic regularization (ARC) rule~\cite{cartis2011adaptive}.

\paragraph{Geodesically convex functions on SPD manifolds}
We first consider the following geodesically convex optimization problem, which arises in geometrically regularized covariance estimation. Given a reference covariance matrix \(A \in\mathbb{S}_{++}^n\) and a data scatter matrix $B \in\mathbb{S}_{++}^n$, the goal is to estimate an SPD matrix \(X\) that balances the closeness to \(A\) with a fidelity term~\cite{Mahalanobis1936}. The optimization problem is formulated as
\[
\min_{X \in \mathbb{S}_{++}^{n}} f(X)=
w \left\|\log\!\left(X^{-1/2} A X^{-1/2}\right)\right\|_{\mathrm{F}}^{2}
+\lambda \tr\!\left(B X^{-1}\right)
+\rho \left(\tr\!\left(B X^{-1}\right)\right)^{2}.
\]
In this experiment, we set $n=40$, $w=\lambda=\rho=1$.
The matrices \(A\) and \(B\) are generated synthetically as random SPD matrices of the form
\[
A = Q_A \diag(\alpha_1,\dots,\alpha_n) Q_A^\top,
\qquad
B = Q_B \diag(\beta_1,\dots,\beta_n) Q_B^\top,
\]
where \(Q_A\) and \(Q_B\) are orthogonal factors obtained from QR decompositions of Gaussian random matrices, and the eigenvalues \(\alpha_i\) and \(\beta_i\) are sampled independently and uniformly from \([0.2,2.0]\). 
Figure~\ref{fig:spd} reports the performance of the RRNCN method with different sketch sizes on the SPD manifold.
In this experiment, the intermediate sketch size $\ell=80$ achieves the best runtime for reaching the optimal value, illustrating the trade-off between Riemannian Nystr\"om approximation quality and per-iteration computational cost.
As the sketch size increases, the algorithm typically reaches a critical point in fewer iterations.
On the other hand, a larger sketch size also incurs a higher computational cost per iteration.

\begin{figure}[htbp]
  \centering
  \begin{subfigure}[t]{0.48\textwidth}
  \centering
  \includegraphics[width=\textwidth]{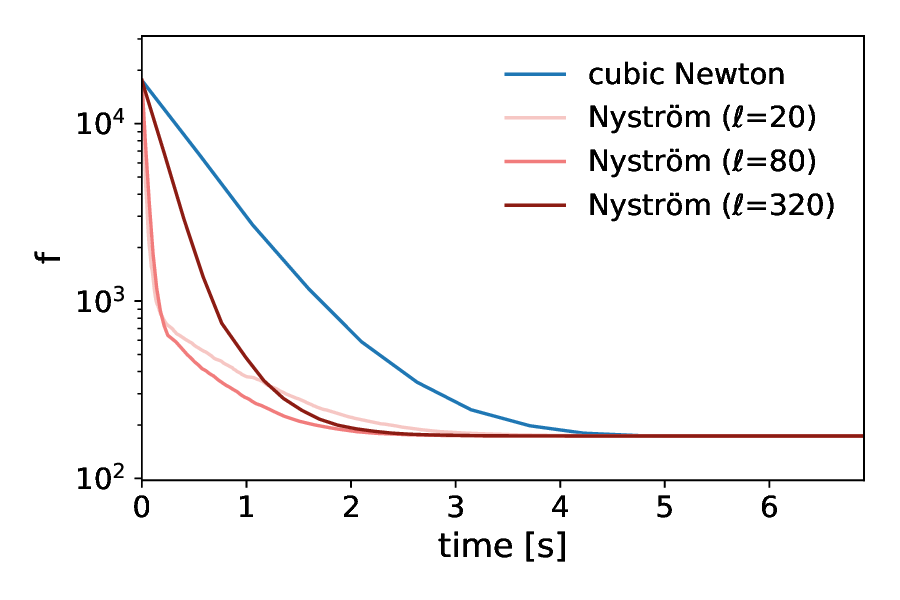}
  \end{subfigure}
  \begin{subfigure}[t]{0.48\textwidth}
  \centering
  \includegraphics[width=\textwidth]{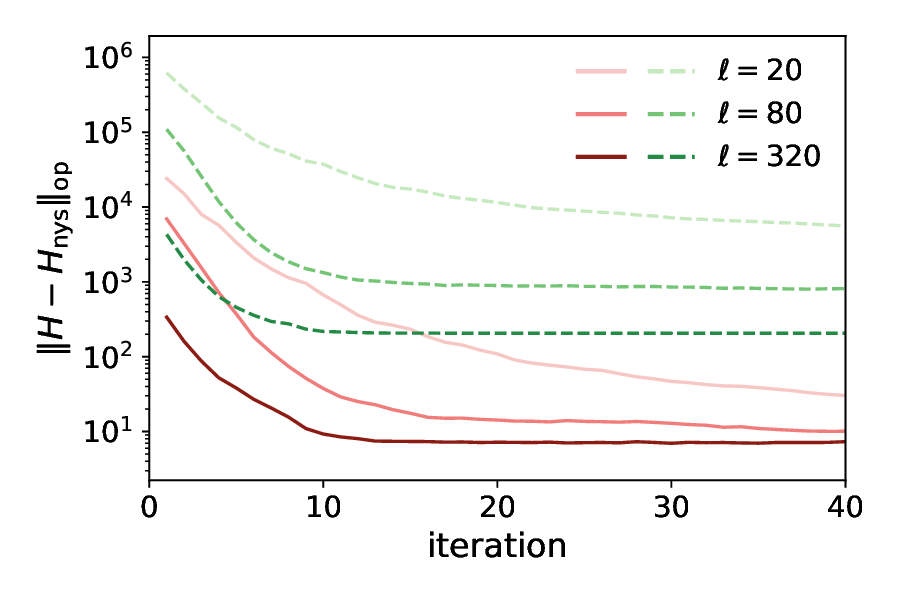}
  \end{subfigure}
  \caption{Performance of the RRNCN method on the geodesically convex optimization problem over the SPD manifold with $n=40$. Left: objective value gap versus wall-clock time. Right: the approximation error together with the theoretical bound from \cref{cor:manifold-22-gaussian} versus the iteration number, where the red solid curve denotes the approximation error, and the green dashed curve denotes the theoretical bound.}
  \label{fig:spd}
\end{figure}

\paragraph{Transported sketching on the Grassmann manifold}
We evaluate the transported sketching strategy proposed in Section~\ref{subsec:transported-sketching} on the Grassmann manifold~\cite{bendokat2024grassmann} $\mathrm{Gr}(n,p)$ with $n = 20000$, $p = 20$, and $\ell=20$. We consider the problem of computing top-$p$ eigenvalues of a given matrix, specifically, the optimization problem is
\[
\max_{X \in \mathrm{Gr}(n,p)} \operatorname{tr}\!\left(X^\top A X\right).
\]
The $A\in\mathbb{R}^{n\times n}$ is taken to be a diagonal matrix
$
\operatorname{diag}(a_1,\dots,a_n),
$
whose diagonal entries are uniform in $[1, 20]$. 
In this experiment, the sketching is refreshed every $T=2$ or $T=3$ iterations, and otherwise we use transported sketching instead of constructing a new sketching from scratch. Let $f(X) = - \operatorname{tr}(X^\top A X)$.
The results are shown in Figure~\ref{fig:grassmann}. The left panel shows that the numbers of iterations required by the Nystr\"om and transported-sketching variants with $T=2,3$ are nearly identical in practice, and the right panel demonstrates the potential runtime speed-up achieved by transported sketching.

\begin{figure}[htbp]
  \centering
  \begin{subfigure}[b]{0.48\textwidth}
  \centering
  \includegraphics[width=\textwidth]{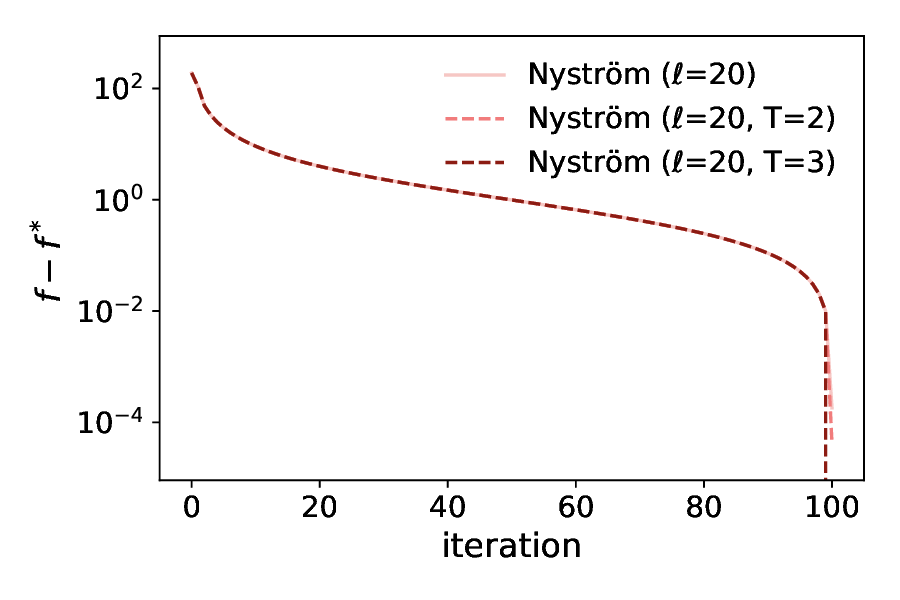}
  \end{subfigure}
  \begin{subfigure}[b]{0.48\textwidth}
  \centering
  \includegraphics[width=\textwidth]{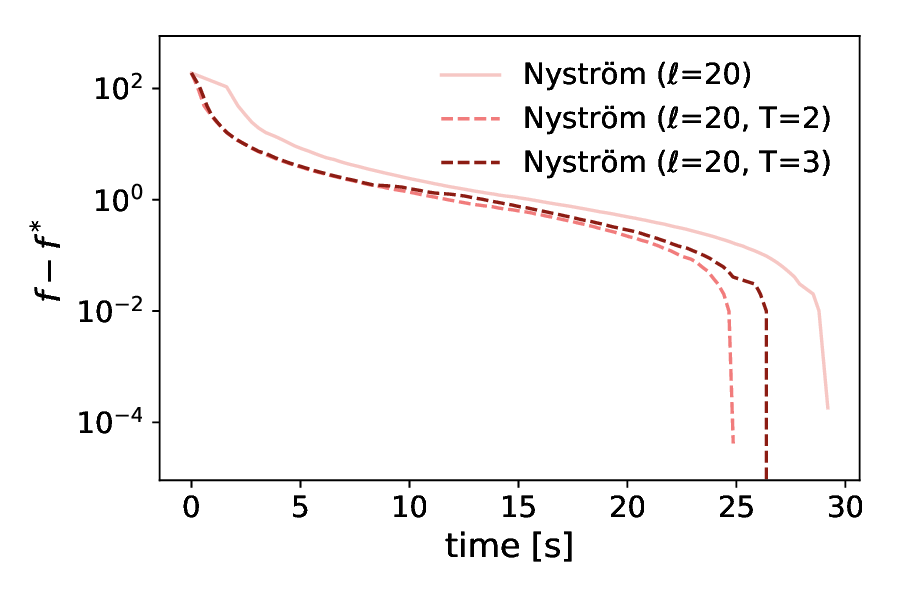}
  \end{subfigure}
  \caption{Performance of the RRNCN method on the Grassmann manifold with $n=20000$ and $p=20$. Left: objective value gap versus iteration. Right: objective value gap versus wall-clock time.}
  \label{fig:grassmann}
\end{figure}

\section{Conclusion and future work}
\label{sec:conclusion}

This paper develops an intrinsic Riemannian Nystr\"om approximation for self-adjoint positive semidefinite operators. The construction preserves elementary properties, including positive semidefiniteness, Loewner domination, and range characterizations. Under a Haar--Grassmann sketching condition, we establish spectral approximation bounds measured in the Riemannian operator norm, thereby extending classical Euclidean Nystr\"om theory to tangent-space operators on manifolds.

Building on this theoretical foundation, we study a randomized Nystr\"om Newton method for Riemannian optimization as an application of the Riemannian Nystr\"om approximation. Numerical experiments on principal geodesic analysis and optimization tasks on the SPD and Grassmann manifolds indicate that the Riemannian Nystr\"om approximation effectively captures the dominant spectral structure of the original operator while reducing computational cost and memory usage.

Several directions remain open. It would be valuable to develop adaptive rules for choosing sketch size and regularization parameters, and to select the best basis for constructing the Riemannian Nystr\"om approximation.

\appendix

\section{Technical details for global complexity}
\label{app:global-complexity}
This section contains the technical details for the global complexity.
To derive global complexity bounds, it is instrumental to analyze the behavior of the method along sketched retraction curves. The next lemma establishes a cubic upper bound that reflects the local geometry of the manifold and ensures the existence of a cubic Newton step.

\begin{lemma}\label{lem:retraction-cubic-upper}
 Suppose \cref{ass:smooth} holds. For any $\rho > 0$, there exists $M_\rho>0$ such that for all $v\in B$ with $\|v\|_x\le \rho$, it holds that
  \begin{equation}
  \label{eq:retraction-cubic-upper-bound}
  \begin{aligned}
  f\big(R_x(\mathcal{P}_{x, B, \Xi} [v])\big) &\le f(x)+\langle h_{x, B, \Xi},v\rangle_x+\tfrac{1}{2}\langle \mathcal{J}_{x, B, \Xi} [v],v\rangle_x+\tfrac{M_\rho}{6}\|v\|_x^{3} ,\\
  f\big(R_x(\mathcal{P}_{x, B, \Xi} [v])\big) &\ge f(x)+\langle h_{x, B, \Xi},v\rangle_x+\tfrac{1}{2}\langle \mathcal{J}_{x, B, \Xi} [v],v\rangle_x - \tfrac{M_\rho}{6}\|v\|_x^{3}.
  \end{aligned}
  \end{equation}
   The constant $M_\rho$ is chosen uniformly on $\mathcal{M}$ and is independent of $x$.
  \end{lemma}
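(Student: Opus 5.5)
The plan is to work entirely with the pullback $\hat f := f\circ R_x$ on $\mathrm T_x\mathcal M$ and Taylor-expand it along the linear curve $t\mapsto t\,\mathcal P_{x,B,\Xi}[v]$. Write $\eta:=\mathcal P_{x,B,\Xi}[v]\in\Xi$. Since $R_x$ is a second-order retraction (\cref{ass:smooth}, item 3), we have $\nabla\hat f(0)=\grad f(x)$ and $\nabla^2\hat f(0)=\Hess f(x)=\mathcal H_x$. The second identity holds because second-order retractions make the Euclidean Hessian of the pullback at the origin coincide with the Riemannian Hessian. The $L_3$-Lipschitz Hessian of $\hat f$ on $\mathrm T_x\mathcal M$ (\cref{ass:smooth}, item 2) then gives the standard two-sided cubic Taylor estimate
\[
\Bigl|\hat f(\eta)-f(x)-\langle \grad f(x),\eta\rangle_x-\tfrac12\langle \mathcal H_x[\eta],\eta\rangle_x\Bigr|\le \tfrac{L_3}{6}\|\eta\|_x^3 .
\]
This is proved by integrating $\nabla^2\hat f(t\eta)-\nabla^2\hat f(0)$ twice along the segment.

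Next I would translate each term into the sketched quantities via adjoints. The linear term is $\langle \grad f(x),\mathcal P[v]\rangle_x=\langle \mathcal P^*[\grad f(x)],v\rangle_x=\langle h_{x,B,\Xi},v\rangle_x$. The quadratic term is $\langle \mathcal H_x\mathcal P[v],\mathcal P[v]\rangle_x=\langle \mathcal J_{x,B,\Xi}[v],v\rangle_x$. Both identities follow directly from \eqref{eq:Px*-coordfree} and the definition of $\mathcal J_{x,B,\Xi}$.

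The remaining step is to bound the cubic remainder by $\|v\|_x^3$, using $\|\eta\|_x\le \|\mathcal F\|_{\mathrm{op}}\|v\|_x$. This is where I expect the main obstacle. The constant $M_\rho$ must be independent of $x$, but $\|\mathcal F\|_{\mathrm{op}}=\|\mathcal R\|_{\mathrm{op}}$ is random and in general unbounded under the Haar--Grassmann condition.

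My first attempt would be to read the lemma for a fixed realization of the sketch and take $M_\rho:=L_3\|\mathcal F\|_{\mathrm{op}}^3$. The lemma states $M_\rho$ is uniform in $x$, so this requires a uniform bound on $\|\mathcal R\|_{\mathrm{op}}$. In the isometric case $\mathcal R\equiv\sqrt d\,\mathrm{Id}_B$ this bound is available, and it gives $M_\rho=L_3 d^{3/2}$, which does not even depend on $\rho$.

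If the Lipschitz bound on $\nabla^2\hat f$ is only local (for instance, if $R_x$ is defined only on a ball), the $\rho$-dependence enters here instead. In that case I would restrict to $\|\eta\|_x\le \|\mathcal F\|_{\mathrm{op}}\rho$ and use compactness of the sublevel set together with continuity of the Hessian of the pullback to obtain a uniform constant. This is the step whose justification I would scrutinize most carefully.
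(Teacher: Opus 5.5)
Your proposal is correct and follows essentially the paper's route. You pull back to $\hat f=f\circ R_x$, use the second-order-retraction identities, and apply a two-sided cubic Taylor bound. You then convert the linear and quadratic terms via the adjoint identities for $h_{x,B,\Xi}$ and $\mathcal J_{x,B,\Xi}$. Finally, you absorb $\|\mathcal P_{x,B,\Xi}[v]\|_x\le\|\mathcal R\|_{\mathrm{op}}\|v\|_x$ into the constant, just as the paper does with $M_\rho:=\widehat M_\rho\|\mathcal R\|_{\mathrm{op}}^{3}$.

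There are two minor differences. First, you derive the Taylor bound directly from item~2 of \cref{ass:smooth}, with constant $L_3/6$ valid for all $\eta$. This avoids a small mismatch in the paper's proof: $\|\mathcal P_{x,B,\Xi}[v]\|_x$ can exceed the radius $\rho$ used with Proposition~3.1 of \cite{zhang2018cubic}. Second, you explicitly flag that the constant depends on the random quantity $\|\mathcal R\|_{\mathrm{op}}$, so it is uniform in $x$ only when $\mathcal R$ is bounded (e.g., the isometric case). The paper absorbs this dependence into $M_\rho$ without comment.
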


  \begin{proof}
  Fix $x$ and consider the pullback $\hat f:=f\circ R_x$ defined on $\mathrm{T}_x\mathcal{M}$. Since $R_x$ is a second-order retraction, $\nabla\hat f(0)=\grad f(x)$ and $\nabla^{2}\hat f(0)=\Hess f(x)$ in the metric induced by $g_x$. By Proposition 3.1 in~\cite{zhang2018cubic}, for radius $ \rho > 0$, there exists a constant $\widehat M_{\rho}>0$ depending only on $L_2, L_3$ and $\rho$ such that
  \begin{subequations}
  \begin{align}
  \hat f(u)&\le \hat f(0)+\langle \grad f(x),u\rangle_x+\tfrac{1}{2}\langle \mathcal{H}_x [u],u\rangle_x+\tfrac{\widehat M_{\rho}}{6}\|u\|_x^{3}, \label{eq:cubic-upper-bound-1} \\
  \hat f(u)&\ge \hat f(0) +\langle \grad f(x),u\rangle_x+\tfrac{1}{2}\langle \mathcal{H}_x [u],u\rangle_x - \tfrac{\widehat M_{\rho}}{6}\|u\|_x^{3}, \label{eq:cubic-upper-bound-2}
  \end{align}
  \end{subequations}
  for all $u\in\mathrm{T}_x\mathcal{M}$ with $\|u\|_x\le \rho$.
  Taking $u = \mathcal{P}_{x, B, \Xi} [v]$ and using the adjoint identities
  \[
  \begin{aligned}
  \langle \grad f(x),\mathcal{P}_{x, B, \Xi} [v]\rangle_x &= \langle h_{x, B, \Xi},v\rangle_x, \\
  \langle \mathcal{H}_x [\mathcal{P}_{x, B, \Xi} [v]],\mathcal{P}_{x, B, \Xi} [v]\rangle_x &=\langle \mathcal{J}_{x, B, \Xi} [v],v\rangle_x,
  \end{aligned}
  \]
  together with $\| \mathcal{P}_{x, B, \Xi} [v] \|_x \leq \|\mathcal{R}\|_{\mathrm{op}}\|v\|_x$ (where $\mathcal{R}$ is the radial factor in the polar decomposition of $\mathcal{F}$), we obtain the desired inequality with $M_\rho:=\widehat M_{\rho}\,\|\mathcal{R}\|_{\mathrm{op}}^{3}$.
\end{proof}

\begin{lemma}
  Let $\mathcal{M}_0:=\left\{x \in \mathcal{M}: f(x) \leq f\left(x_0\right)\right\}$ and \[
  \bar \rho := 3 \max \{ \max_{x \in \mathcal{M}_0} \| \grad f(x) \|_x, \max_{x \in \mathcal{M}_0} \| \Hess f(x) \|_{\mathrm{op}} \}.
  \]
  By setting $\rho=\bar\rho$ and choosing $\sigma\ge \max\{M_\rho,1\}$, it holds that any minimizer $v$ of the subproblem~\eqref{eq:cubic-model} satisfies $\|v\|_x\le \rho$.
\end{lemma}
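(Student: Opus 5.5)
The proof will compare the value of $\phi$ at a minimizer with its value at $v=0$. Let $v$ be any minimizer of \eqref{eq:cubic-model}. Since $\phi(0)=0$, we have $\phi(v)\le 0$. The plan is to show that $\phi(v)>0$ whenever $\|v\|_x>\rho$. Granting that, the bound $\|v\|_x\le\rho$ follows immediately.

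The lower bound on $\phi$ combines three ingredients.
\begin{enumerate}
\item Positive semidefiniteness: $\mathcal J_{x,B,\Xi}=\mathcal P^*\mathcal H_x\mathcal P\succeq 0$, because $\mathcal H_x=\Hess f(x)\succeq 0$ by geodesic convexity (\cref{ass:smooth}). This lets us drop the quadratic term from below.
\item Cauchy--Schwarz on the linear term, which gives
\[
\phi(v)\ \ge\ -\|h_{x,B,\Xi}\|_x\|v\|_x+\tfrac{\sigma}{6}\|v\|_x^3 .
\]
\item Bounding $\|h_{x,B,\Xi}\|_x$. Write $\|h\|_x=\|\mathcal F^*\Pi_\Xi\grad f(x)\|_x\le\|\mathcal R\|_{\mathrm{op}}\|\grad f(x)\|_x$, using that $\mathcal P^*=\mathcal F^*\Pi_\Xi$ and $\|\mathcal F^*\|_{\mathrm{op}}=\|\mathcal R\|_{\mathrm{op}}$. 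For $x\in\mathcal M_0$ we then have $\|\grad f(x)\|_x\le\bar\rho/3$.
\end{enumerate}

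From these, $\phi(v)>0$ as soon as $\|v\|_x^2>6\|h\|_x/\sigma$. It therefore suffices that
\[
\rho^2\ \ge\ \frac{6\|h\|_x}{\sigma}.
\]
Since $\sigma\ge 1$ and $\|h\|_x\le\|\mathcal R\|_{\mathrm{op}}\bar\rho/3$, the right-hand side is at most $2\|\mathcal R\|_{\mathrm{op}}\bar\rho$. So the requirement becomes $\bar\rho^2\ge 2\|\mathcal R\|_{\mathrm{op}}\bar\rho$, i.e.\ $\bar\rho\ge 2\|\mathcal R\|_{\mathrm{op}}$. One can use the $\Hess$-bound inside $\bar\rho$ and $\sigma\ge M_\rho$ to absorb constants if needed.

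Alternatively, I would avoid dropping the quadratic term and instead use the first-order condition $(\mathcal J+\nu\mathrm{Id}_B)v=-h$ with $\nu=\tfrac{\sigma}{2}\|v\|_x$. This gives $\nu\|v\|_x\le\|h\|_x$, hence $\|v\|_x^2\le 2\|h\|_x/\sigma$. That is a factor $3$ sharper and is the route I would actually write.

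The main obstacle is the $\sigma$-independent scaling mismatch. The bound is $\|v\|_x\le\sqrt{2\|h\|_x/\sigma}$, and turning it into $\le\bar\rho$ requires either $\bar\rho\gtrsim\|\mathcal R\|_{\mathrm{op}}$ or a larger $\sigma$. With $\sigma\ge\max\{M_\rho,1\}$ and the isometric normalization $\|\mathcal R\|_{\mathrm{op}}$, I would split into two cases:
\begin{itemize}
\item if $\|h\|_x\le\rho$, then $\|v\|_x^2\le 2\rho$, and $2\rho\le\rho^2$ whenever $\rho\ge 2$;
\item otherwise I would use $\sigma\ge M_\rho$, where $M_\rho=\widehat M_\rho\|\mathcal R\|^3_{\mathrm{op}}$, to absorb the $\|\mathcal R\|_{\mathrm{op}}$ factor.
\end{itemize}
The factor $3$ in $\bar\rho$ is presumably chosen to make this arithmetic close. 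If the constants do not close, I would note that the lemma implicitly needs $\bar\rho$ (or $\sigma$) large relative to $\|\mathcal R\|_{\mathrm{op}}$, and state that requirement.
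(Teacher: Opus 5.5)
\textbf{There is a genuine gap, and it cannot be closed.}

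Your estimates are correct up to the last step. For $x\in\mathcal M_0$, the first-order condition $(\mathcal J_{x,B,\Xi}+\tfrac{\sigma}{2}\|v\|_x\mathrm{Id}_B)[v]=-h_{x,B,\Xi}$ together with $\mathcal J_{x,B,\Xi}\succeq 0$ gives
$\|v\|_x^2\le 2\|h_{x,B,\Xi}\|_x/\sigma\le 2\|\mathcal R\|_{\mathrm{op}}\|\grad f(x)\|_x/\sigma$.
The gap is the conversion of this into $\|v\|_x\le\bar\rho$, and neither branch of your case split follows from the hypotheses:
\begin{itemize}
\item nothing gives $\rho\ge 2$;
\item $\sigma\ge M_\rho=\widehat M_\rho\|\mathcal R\|_{\mathrm{op}}^3$ cannot absorb the mismatch, because $\widehat M_\rho$ is only a cubic-remainder constant, governed by $L_3$, and can be arbitrarily small.
\end{itemize}
So the proposal does not prove the statement. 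No argument can, however, because the statement is false as written: it compares $\|v\|_x$ with gradient and Hessian norms, and that comparison is not invariant under $f\mapsto cf$.

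Concretely, take $\mathcal M=\mathbb R^d$, $R_x(u)=x+u$, $f(x)=\tfrac{c}{2}\|x\|^2$ and $x=x_0\neq 0$, so $\Pi_\Xi[x_0]\neq0$ almost surely.
\begin{itemize}
\item Here $L_3=0$, so the cubic-remainder constant may be taken below $1$ and $\sigma=1$ is admissible.
\item The quantities $\bar\rho=3c\max\{\|x_0\|,1\}$, $\|h_{x,B,\Xi}\|_x=c\,\|\mathcal F^*\Pi_\Xi[x_0]\|_x$ and $\|\mathcal J_{x,B,\Xi}\|_{\mathrm{op}}=c\,\|\mathcal R\|_{\mathrm{op}}^2$ are all of order $c$.
\item The first-order condition gives $\|h_{x,B,\Xi}\|_x\le(\|\mathcal J_{x,B,\Xi}\|_{\mathrm{op}}+\tfrac12\|v\|_x)\|v\|_x$. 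So $\|v\|_x\le\bar\rho$ would force a quantity of order $c$ to be $O(c^2)$, which fails for small $c$.
\end{itemize}
In fact $\|v\|_x$ is of order $\sqrt c$. For instance, with $B=\Xi=\mathbb R^d$, $\mathcal F=\mathrm{Id}$, $\|x_0\|=1$ and $c=0.01$, one gets $\|v\|_x\approx 0.13>0.03=\bar\rho$.

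\textbf{Where the paper's proof goes wrong.} It reaches the conclusion only through two faulty steps.
\begin{itemize}
\item It quotes Lemma~2.2 of \cite{cartis2011adaptive} as $\|v\|_x\le\tfrac3\sigma\max\{\sqrt\sigma\,\|h_{x,B,\Xi}\|_x,\|\mathcal J_{x,B,\Xi}\|_{\mathrm{op}}\}$. That lemma has $\sqrt{\sigma\|h\|}$ in place of $\sqrt\sigma\,\|h\|$; the quoted form is not even dimensionally consistent. The correct form yields the term $3\sqrt{\|h_{x,B,\Xi}\|_x/\sigma}$, which is exactly the square-root scaling you ran into.
\item It then uses $\|h_{x,B,\Xi}\|_x\le\|\grad f(x)\|_x$ and $\|\mathcal J_{x,B,\Xi}\|_{\mathrm{op}}\le\|\mathcal H_x\|_{\mathrm{op}}$. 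In general these require $\|\mathcal F\|_{\mathrm{op}}\le 1$, which is incompatible with $\mathbb E[\mathcal R^2]=d\,\mathrm{Id}_B$.
\end{itemize}

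\textbf{What a correct version looks like.} Your route (first-order condition plus $\mathcal J_{x,B,\Xi}\succeq0$) is the right one and is sharper than the cited lemma. It also shows that the Hessian part of $\bar\rho$ plays no role. Your final sentence is the correct conclusion once made precise; either of the following suffices.
\begin{itemize}
\item Add the requirement $\sigma\ge \tfrac{2\|\mathcal R\|_{\mathrm{op}}}{3\bar\rho}$. Then $\|v\|_x^2\le \tfrac{2\|\mathcal R\|_{\mathrm{op}}\bar\rho}{3\sigma}\le\bar\rho^2$.
\item Enlarge $\bar\rho$ to at least $\sqrt{2\|\mathcal R\|_{\mathrm{op}}\max_{\mathcal M_0}\|\grad f\|}$, using $\sigma\ge1$.
\end{itemize}
Since $\|\mathcal R\|_{\mathrm{op}}$ is random, and unbounded under Gaussian sketching, such a condition has to be imposed realization by realization.
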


\begin{proof}
  According to Lemma 2.2 in~\cite{cartis2011adaptive}, the solution to the subproblem~\eqref{eq:cubic-model} satisfies 
  \[
  \|v\|_x \le \tfrac{3}{\sigma} \max \{ \sqrt{\sigma}\, \|h_{x, B, \Xi}\|_x, \| \mathcal{J}_{x, B, \Xi} \|_{\mathrm{op}} \} \le \tfrac{3}{\sigma} \max \{ \sqrt{\sigma}\, \|\grad f(x)\|_x, \| \mathcal{H}_x \|_{\mathrm{op}} \} \le \rho.
  \]
  \end{proof}
  \begin{remark}
    \label{rem:cubic-upper-bound}
  The lemma shows that for $\rho$ sufficiently large, the solution to the subproblem~\eqref{eq:cubic-model} is in the interior of the ball of radius $\rho$. Without loss of generality, we can let $M = \max_{\rho \in [0, \bar \rho]} M_\rho$ be a uniform constant independent of $x, \rho$. Then~\eqref{eq:retraction-cubic-upper-bound} becomes
  \[
  \begin{aligned}
  f\big(R_x(\mathcal{P}_{x, B, \Xi} [v])\big) &\le f(x)+\langle h_{x, B, \Xi},v\rangle_x+\tfrac{1}{2}\langle \mathcal{J}_{x, B, \Xi} [v],v\rangle_x+\tfrac{M}{6}\|v\|_x^{3} ,\\
  f\big(R_x(\mathcal{P}_{x, B, \Xi} [v])\big) &\ge f(x)+\langle h_{x, B, \Xi},v\rangle_x+\tfrac{1}{2}\langle \mathcal{J}_{x, B, \Xi} [v],v\rangle_x - \tfrac{M}{6}\|v\|_x^{3}.
  \end{aligned}
  \]
  \end{remark}

  The following lemma establishes fundamental isotropy properties of random projections acting on gradients and Hessians.

  \begin{lemma}\label{lem:isotropy-identities}
  Under \cref{ass:smooth}, let $\Pi_{\Xi_x}$ denote the $g_x$-orthogonal projector onto the random sketch subspace $\Xi$ at $x$. Then for all $u \in \mathrm{T}_x\mathcal{M}$, it holds that
  \[
  \mathbb{E}\left[\langle \grad f(x),\Pi_{\Xi_x} [u]\rangle_x\right]=\tfrac{\ell}{d}\langle \grad f(x),u\rangle_x,
  \]
  \[
  \mathbb{E}\left[\langle \mathcal{H}_x[\Pi_{\Xi_x}[u]],\Pi_{\Xi_x}[u]\rangle_x\right]\le\tfrac{\ell}{d}\,L_2\,\|u\|_x^{2},
  \]
  \[
  \mathbb{E}\left[\|\Pi_{\Xi_x} [u]\|_x^{3}\right]\le \tfrac{\ell}{d}\|u\|_x^{3}.
  \]
  \end{lemma}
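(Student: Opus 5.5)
The plan is to reduce all three claims to one identity, $\mathbb{E}[\Pi_{\Xi_x}] = \tfrac{\ell}{d}\,\mathrm{Id}_x$. This identity uses only item~\ref{itm:haar-grassmann-sketch-haar} of \cref{def:haar-grassmann-sketch}, which Assumption~\ref{ass:smooth}(4) guarantees. After that, each estimate follows from elementary properties of orthogonal projections.

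\textbf{Step 1: the isotropy identity.} Let $T$ be any $g_x$-isometry of $\mathrm{T}_x\mathcal M$.
\begin{itemize}
\item The projection onto $T(\Xi)$ is $T\,\Pi_{\Xi}\,T^{*}$.
\item Since $\Xi$ is Haar-uniform on $\mathrm{Gr}(\ell,\mathrm{T}_x\mathcal M)$, we have $T(\Xi)\stackrel{d}{=}\Xi$.
\item Hence $\mathcal E:=\mathbb E[\Pi_\Xi]$ satisfies $T\mathcal E T^{*}=\mathcal E$ for every isometry $T$. The expectation exists because $\Pi_\Xi$ is bounded.
\item A self-adjoint operator commuting with the whole orthogonal group of $\mathrm{T}_x\mathcal M$ is a multiple of the identity. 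Indeed, all its eigenspaces must be invariant under every isometry, and the orthogonal group acts irreducibly.
\item So $\mathcal E=c\,\mathrm{Id}_x$. Taking traces, $cd=\mathbb E[\operatorname{tr}\Pi_\Xi]=\ell$, so $c=\ell/d$.
\end{itemize}
This is the only step with real content; everything afterwards is short.

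\textbf{Step 2: the gradient identity.} By linearity of expectation,
$\mathbb E\langle \grad f(x),\Pi_\Xi[u]\rangle_x=\langle \grad f(x),\mathbb E[\Pi_\Xi]u\rangle_x=\tfrac{\ell}{d}\langle \grad f(x),u\rangle_x$.

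\textbf{Step 3: the Hessian bound.} Geodesic convexity gives $\Hess f(x)\succeq 0$, and Assumption~\ref{ass:smooth}(2) gives $\Hess f(x)\preceq L_2\,\mathrm{Id}_x$. Therefore, pointwise,
$\langle \mathcal H_x[\Pi_\Xi u],\Pi_\Xi u\rangle_x\le L_2\|\Pi_\Xi u\|_x^2$.
Since $\Pi_\Xi$ is self-adjoint and idempotent, $\|\Pi_\Xi u\|_x^2=\langle u,\Pi_\Xi u\rangle_x$. Taking expectations and applying Step~1 gives $\mathbb{E}\|\Pi_\Xi u\|_x^2=\tfrac{\ell}{d}\|u\|_x^2$, which yields the claim.

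\textbf{Step 4: the cubic bound.} Because $\Pi_\Xi$ is an orthogonal projection, $\|\Pi_\Xi u\|_x\le\|u\|_x$. Hence
$\|\Pi_\Xi u\|_x^3\le \|u\|_x\,\|\Pi_\Xi u\|_x^2=\|u\|_x\langle u,\Pi_\Xi u\rangle_x$.
Taking expectations and using Step~1 once more gives $\mathbb E\|\Pi_\Xi u\|_x^3\le\tfrac{\ell}{d}\|u\|_x^3$.
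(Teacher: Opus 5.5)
Your proposal is correct and follows the paper's proof essentially step for step. Both reduce everything to $\mathbb{E}[\Pi_{\Xi_x}]=\tfrac{\ell}{d}\,\mathrm{Id}_x$, then use $\mathcal{H}_x\preceq L_2\,\mathrm{Id}_x$ with $\mathbb{E}\|\Pi_{\Xi_x}[u]\|_x^2=\tfrac{\ell}{d}\|u\|_x^2$, and contractivity of $\Pi_{\Xi_x}$ for the cubic term. Your one addition is Step~1, which actually proves the isotropy identity from the isometry invariance of the Haar law on $\mathrm{Gr}(\ell,\mathrm{T}_x\mathcal M)$ plus a trace computation, whereas the paper only asserts that identity as straightforward to verify.
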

  
  \begin{proof}
  The first identity follows from linearity of expectation and $\mathbb{E}[\Pi_{\Xi_x}]=\tfrac{\ell}{d}\,\mathrm{Id}_x$. For the second, apply $\mathcal{H}_x \preceq L_2 \mathrm{Id}_x$ and the first identity to obtain
  \[
  \mathbb{E}\left[\langle \mathcal{H}_x[\Pi_{\Xi_x}[u]],\Pi_{\Xi_x}[u]\rangle_x\right]\le \mathbb{E}\left[L_2\|\Pi_{\Xi_x}[u]\|_x^{2}\right]=L_2\,\tfrac{\ell}{d}\|u\|_x^{2}.
  \]
  The third inequality uses that $\Pi_{\Xi_x}$ is an orthogonal projector, hence idempotent and contractive: $\Pi_{\Xi_x}[u]=\Pi_{\Xi_x}[\Pi_{\Xi_x}[u]]$ and $\|\Pi_{\Xi_x}[u]\|_x\le \|u\|_x$. Therefore
  \[
  \|\Pi_{\Xi_x}[u]\|_x^{3}=\|\Pi_{\Xi_x}[u]\|_x\,\|\Pi_{\Xi_x}[u]\|_x^{2}\le \|u\|_x\,\|\Pi_{\Xi_x}[u]\|_x^{2}.
  \]
  Taking expectations and using $\mathbb{E}[\|\Pi_{\Xi_x}[u]\|_x^{2}]=\langle u,\mathbb{E}[\Pi_{\Xi_x}]u\rangle_x=\tfrac{\ell}{d}\|u\|_x^{2}$ yields\\
  $
  \mathbb{E}[\|\Pi_{\Xi_x}[u]\|_x^{3}]\le \tfrac{\ell}{d}\|u\|_x^{3}.
  $
  \end{proof}

  With the cubic upper bound and isotropy properties in hand, it remains to analyze the expected progress of a single iteration. The following lemma relates the expected function value at the next iterate to the current iterate and an arbitrary comparison point.
  
  \begin{lemma}\label{lem:one-step-progress}
  Suppose \cref{ass:smooth} holds. At iterate $x\in\mathcal{M}$, set $\sigma \geq M$ and let $\hat{v} \in\arg\min_{v\in B_x}\big\{\langle h_{x, B, \Xi},v\rangle_x+\tfrac{1}{2}\langle \mathcal{J}_{x, B, \Xi} [v],v\rangle_x+\tfrac{\sigma}{6}\|v\|_x^{3}\big\}$ and set $x^{+}:=R_x(\mathcal{P}_{x, B, \Xi} [\hat{v}])$. Then for every $y\in\mathcal{M}$,
  \[
  \mathbb{E}\left[f(x^{+})\mid x\right]\le\Big(1-\tfrac{\ell}{d}\Big)f(x)+\tfrac{\ell}{d}f(y)+\tfrac{L_2\ell}{2d}\operatorname{dist}(x,y)^{2}+\tfrac{\sigma\ell}{6d}\,\operatorname{dist}(x,y)^{3}.
  \]
  \end{lemma}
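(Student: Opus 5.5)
The plan is the standard one-step argument for (subspace) cubic Newton methods. The cubic upper bound is transferred to the sketched model, and the sketched step is then compared with a scaled geodesic step toward $y$. Since $\sigma\ge M$, \cref{rem:cubic-upper-bound} together with the fact that $\hat v$ minimizes the cubic model over $B$ gives, for every $v\in B$,
\[
f(x^+)\le f(x)+\phi(\hat v)\le f(x)+\langle h_{x,B,\Xi},v\rangle_x+\tfrac12\langle\mathcal J_{x,B,\Xi}[v],v\rangle_x+\tfrac{\sigma}{6}\|v\|_x^3 .
\]
The task is therefore to choose a good, possibly random, test vector $v\in B$, take expectations, and use the isotropy identities of \cref{lem:isotropy-identities}.

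For the comparison direction, let $\eta:=\mathrm{Log}_x(y)$, or more generally a tangent vector with $\|\eta\|_x=\operatorname{dist}(x,y)$ along a minimizing geodesic. Take $v\in B$ so that $\mathcal P_{x,B,\Xi}[v]=\Pi_\Xi[\eta]$, namely $v:=\mathcal F^{-1}\Pi_\Xi[\eta]$; this is possible because $\mathcal F:B\to\Xi$ is invertible almost surely. Then the adjoint identities give
\[
\langle h,v\rangle_x=\langle \grad f(x),\Pi_\Xi\eta\rangle_x,\qquad \langle\mathcal J v,v\rangle_x=\langle\mathcal H_x\Pi_\Xi\eta,\Pi_\Xi\eta\rangle_x .
\]
By \cref{lem:isotropy-identities}, the expectations of these terms are at most $\tfrac{\ell}{d}\langle\grad f(x),\eta\rangle_x$ and $\tfrac{\ell}{d}L_2\|\eta\|_x^2$, respectively. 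Geodesic convexity gives $\langle\grad f(x),\eta\rangle_x\le f(y)-f(x)$, and combining these bounds produces the terms $(1-\tfrac{\ell}{d})f(x)+\tfrac{\ell}{d}f(y)$ and $\tfrac{L_2\ell}{2d}\operatorname{dist}(x,y)^2$.

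The main obstacle is the cubic term. Here $\|v\|_x=\|\mathcal R^{-1}\mathcal U^{*}\Pi_\Xi\eta\|_x$, not $\|\Pi_\Xi\eta\|_x$, so the factor $\|\mathcal R^{-1}\|_{\mathrm{op}}^3$ appears. In the isometric normalization the natural fix is to work with the rescaled variable: the cubic model in the paper is effectively stated in the variable $u=\mathcal P_{x,B,\Xi}[v]$, with $M$ absorbing $\|\mathcal R\|_{\mathrm{op}}^3$ as in \cref{lem:retraction-cubic-upper}. I would first restate the minimization over $u\in\Xi$, so that the regularizer reads $\tfrac{\sigma}{6}\|u\|_x^3$; if necessary this is done by redefining $\sigma$ via $\mathcal R$, which is exact when $\mathcal R\equiv\sqrt d\,\mathrm{Id}_B$ up to the scaling conventions. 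With that convention the cubic term is $\tfrac{\sigma}{6}\|\Pi_\Xi\eta\|_x^3$, and the third isotropy bound gives $\mathbb E\le\tfrac{\sigma\ell}{6d}\operatorname{dist}(x,y)^3$.

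Finally, I would collect the three expected bounds and use $\mathbb E[f(x^+)\mid x]\le\mathbb E[\text{model at the chosen }v]$, which holds because the minimizer beats any measurable choice pointwise. This yields the claimed inequality. The underlying isotropy $\mathbb E[\Pi_\Xi]=\tfrac{\ell}{d}\mathrm{Id}_x$ follows from the Haar-uniformity of $\Xi$ in \cref{def:haar-grassmann-sketch}.
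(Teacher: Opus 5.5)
You follow the paper's skeleton: $w=\Exp_x^{-1}(y)$, the test vector $v=\mathcal F^{-1}\Pi_\Xi[w]\in B$, optimality of $\hat v$, \cref{lem:isotropy-identities}, and geodesic convexity. Your chain $f(x^+)\le f(x)+\phi(\hat v)\le f(x)+\phi(v)$ and your direct use of $\langle\grad f(x),w\rangle_x\le f(y)-f(x)$ are the correct versions of what the written proof does. That proof instead asserts $f(x^+)\le f(R_x(\Pi_\Xi[w]))$, and it detours through a $\tfrac{M}{6}\|w\|_x^3$ term that it then drops in the wrong direction.

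You are right that the cubic term carries $\|\mathcal R^{-1}\|_{\mathrm{op}}^3$, but your remedy does not close the gap. Re-posing the subproblem over $u\in\Xi$ with regularizer $\tfrac{\sigma}{6}\|u\|_x^3$ changes $\hat v$, and hence $x^+$, so it proves an inequality for a different iteration. Rescaling $\sigma$ to keep the iteration fixed is possible only for scalar $\mathcal R=r\,\mathrm{Id}_B$. Even then the rescaled parameter $\sigma r^{-3}$ is random, so its expectation, not $\sigma$, enters the final bound.

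What works is a pointwise bound inside the original model. From $v=\mathcal R^{-1}\mathcal U^{*}\Pi_\Xi[w]$ you get $\|v\|_x\le\|\mathcal R^{-1}\|_{\mathrm{op}}\|\Pi_\Xi[w]\|_x$. Since $\mathcal R$ is independent of $\mathcal U$, hence of $\Xi$,
\[
\mathbb E\bigl[\tfrac{\sigma}{6}\|v\|_x^3\bigr]\le\tfrac{\sigma}{6}\,\mathbb E\bigl[\|\mathcal R^{-1}\|_{\mathrm{op}}^3\bigr]\,\tfrac{\ell}{d}\,\operatorname{dist}(x,y)^3 .
\]
So the stated inequality follows whenever $\mathbb E[\|\mathcal R^{-1}\|_{\mathrm{op}}^3]\le 1$. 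This covers the isometric case $\mathcal R\equiv\sqrt d\,\mathrm{Id}_B$, where $\|v\|_x=d^{-1/2}\|\Pi_\Xi[w]\|_x$ and no reformulation is needed. In general the argument yields the lemma with $\sigma$ replaced by $\sigma\,\mathbb E[\|\mathcal R^{-1}\|_{\mathrm{op}}^3]$ (possibly $+\infty$) in the last term.

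This restriction is not an artifact of your route. The paper's proof replaces $\tfrac{\sigma}{6}\|v\|_x^3$ by $\tfrac{\sigma}{6}\|\Pi_\Xi[w]\|_x^3$ without comment, and under the general Haar--Grassmann condition the statement fails. Take $\mathcal M=\mathbb R^d$, $R_x(u)=x+u$ and $f=\tfrac12\|\cdot\|^2$, so $L_2=1$ and the quadratic model is exact; the requirement $\sigma\ge M$ is then harmless, so fix any $\sigma>0$. Let $\mathcal R=r\,\mathrm{Id}_B$ with $r$ independent of $\mathcal U$, where $r=\epsilon$ with probability $1-q$ and $r^2=(d-(1-q)\epsilon^2)/q$ otherwise. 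Every requirement of \cref{def:haar-grassmann-sketch} holds.

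In the variable $u=\mathcal F[v]$ the regularizer becomes $\tfrac{\sigma}{6r^3}\|u\|^3$. On $\{r=\epsilon\}$ the decrease $f(x)-f(x^+)$ is at most $\sqrt{2\epsilon^3/\sigma}\,\|x\|^{3/2}$, and on the complement $f(x^+)\ge\tfrac12\|x-\Pi_\Xi[x]\|^2$. Hence $\liminf_{\epsilon\to0}\mathbb E[f(x^+)]\ge(1-q\tfrac{\ell}{d})f(x)$. With $y=x/2$ the right-hand side equals $(1-\tfrac{\ell}{2d})f(x)+\tfrac{\sigma\ell}{48d}\|x\|^3$, which is strictly smaller when $q<\tfrac12$ and $\sigma\|x\|<12(1-2q)$. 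The correct repair is therefore an extra hypothesis on $\mathcal R$, or the factor $\mathbb E[\|\mathcal R^{-1}\|_{\mathrm{op}}^3]$, not a change of subproblem.
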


  \begin{proof}
  
  Let $w := \Exp_x^{-1}(y) \in \mathrm{T}_x \mathcal{M}$.
  By the geodesic convexity of $f$, $\langle \grad f(x),w\rangle_x\le f(y)-f(x)$. Since $\tfrac{M}{6}\|w\|_x^{3}\ge 0$, it follows that
  \begin{equation}
  \label{eq: proof of lemma one-step-progress-1}
  \langle \grad f(x),w\rangle_x \leq f(y) - f(x) + \tfrac{M}{6}\|w\|_x^{3}.
  \end{equation}
  Let $u:=\Pi_{\Xi_x}[w]\in \Xi$. Since $\mathcal{P}_{x,B,\Xi}|_{B}=\mathcal{F}:B\to\Xi$ has rank $\ell=\dim(\Xi)$, it is surjective onto $\Xi$, so we may choose $v\in B$ such that $\mathcal{P}_{x,B,\Xi}[v]=u$.
  The optimality of $\hat v$ implies that
  \[
  \begin{aligned}
  f(x^{+})
  &\le f\big(R_x(\Pi_{\Xi_x}[w])\big)\\
  &\le f(x)
    +\langle \grad f(x),\Pi_{\Xi_x}[w]\rangle_x
    +\tfrac{1}{2}\langle \mathcal{H}_x[\Pi_{\Xi_x}[w]],\Pi_{\Xi_x}[w]\rangle_x
    +\tfrac{\sigma}{6}\|\Pi_{\Xi_x}[w]\|_x^{3}.
  \end{aligned}
  \]
  Taking conditional expectation and applying \cref{lem:isotropy-identities} gives
  \[
  \begin{aligned}
  \mathbb{E} [ f(x^{+}) | x]
    &\leq f(x)
      +\tfrac{\ell}{d}\langle \grad f(x), w\rangle_x
      +\tfrac{L_2 \ell}{2d}\|w\|_x^{2}
      +\tfrac{\sigma \ell}{6d}\|w\|_x^{3} \\
    &\overset{\eqref{eq: proof of lemma one-step-progress-1}}{\leq} f(x)+\tfrac{\ell}{d}\big(f(y)-f(x) + \tfrac{M}{6}\|w\|_x^{3}\big)  +\tfrac{L_2 \ell}{2d}\|w\|_x^{2}
    +\tfrac{\sigma \ell}{6d}\|w\|_x^{3} \\
    &\leq f(x)+\tfrac{\ell}{d}\big(f(y)-f(x)\big)+\tfrac{L_2 \ell}{2d}\|w\|_x^{2}+\tfrac{\sigma \ell}{6d}\|w\|_x^{3}.
  \end{aligned}
  \]
  Finally, substituting $\|w\|_x=\operatorname{dist}(x,y)$ completes the proof.
  \end{proof}
Building on the one-step progress bound, \cref{thm:global-complexity} establishes global convergence rates, with explicit guarantees from any starting point.

\paragraph{Proof of \Cref{thm:global-complexity}}
\begin{proof}
  For $k\ge 0$, set $\delta_k:=\mathbb{E}\left[f(x_k)\right]-f^{\star}$ and define the weights
\[
a_k:=k^{2},\quad A_0:=\tfrac{4}{3}\Big(\tfrac{d}{\ell}\Big)^{3},\quad A_k:=A_0+\sum_{i=1}^{k}a_i\ \text{ for }k\ge 1,\quad \alpha_k:=\tfrac{d}{\ell}\tfrac{a_{k+1}}{A_{k+1}}.
\]
Let $y_k:=\operatorname{Exp}_{x_k}\big(\alpha_k\,\operatorname{Exp}_{x_k}^{-1}(x^\star)\big)$. 
    The geodesic convexity of $f$ implies that
    \[
    f(y_k)\le (1-\alpha_k)f(x_k)+\alpha_k f^*+\tfrac{L_2}{2}\alpha_k^{2}\operatorname{dist}(x_k,x^\star)^{2},
    \]
    and $\operatorname{dist}(x_k,y_k)=\alpha_k\,\operatorname{dist}(x_k,x^\star)\le \alpha_k r$. Applying \cref{lem:one-step-progress} with $y=y_k$, substituting the bound on $f(y_k)$, and using $\sigma_k=M$, we obtain
    \[
      \mathbb{E}\left[f(x_{k+1}) \mid x_k\right]\le \Big(1-\tfrac{\ell}{d}\alpha_k\Big)f(x_k)+\tfrac{\ell}{d}\alpha_k f^*+\tfrac{L_2 \ell}{d} \alpha_k^{2}r^{2}+\tfrac{M \ell}{6d}\alpha_k^{3}r^{3},
    \]
    where the coefficient $\tfrac{L_2\ell}{d}$ arises from combining the $\tfrac{L_2}{2}\alpha_k^{2}r^{2}$ term in the bound on $f(y_k)$ with the $\tfrac{L_2\ell}{2d}\alpha_k^{2}r^{2}$ term from \cref{lem:one-step-progress}.
    Multiplying both sides by $A_{k+1}$ and using $1-\tfrac{\ell}{d}\alpha_k=\tfrac{A_k}{A_{k+1}}$ gives
    $
    A_{k+1}\delta_{k+1}\le A_k\delta_k+\tfrac{d}{\ell}L_2 r^{2}\frac{a_{k+1}^{2}}{A_{k+1}}+\Big(\tfrac{d}{\ell}\Big)^{2}\tfrac{M r^{3}}{6}\frac{a_{k+1}^{3}}{A_{k+1}^{2}}
    $.
    Summing from $i=0$ to $k-1$ and using the elementary estimates $\sum_{i=1}^{k}\tfrac{a_i^{2}}{A_i}\le 3k^{2}$ and $\sum_{i=1}^{k}\tfrac{a_i^{3}}{A_i^{2}}\le 9k$ yields
    $
    A_k\delta_k\le A_0\delta_0+\tfrac{d}{\ell}L_2 r^{2}\cdot 3k^{2}+\Big(\tfrac{d}{\ell}\Big)^{2}\tfrac{M r^{3}}{6}\cdot 9k.
    $
    Since $A_k\ge A_0+\tfrac{k^{3}}{3}$ and $\tfrac{A_0}{A_0+k^{3}/3}\le \big(1+\tfrac{\ell}{4d}k\big)^{-3}$, division by $A_k$ yields the result.
    \end{proof}

\section{Technical details for local convergence rates}
\label{app:local-convergence-rates}
This section contains the technical details for the local convergence rates.
Throughout this section, let $x^\star$ be the local minimizer of $f$, and let $U$ be a geodesically convex neighborhood of $x^\star$ on which the local analysis is carried out. For each $x\in U$, let
\[
\mathcal T_{x^\star\to x}:\mathrm{T}_{x^\star}\mathcal{M}\to \mathrm{T}_x\mathcal{M}
\]
be the isometric vector transport from \cref{ass:local-transported-resampling}, and write $\mathcal T_{x\to x^\star}:=(\mathcal T_{x^\star\to x})^{-1}$.

For every $x\in U$, define the pullback gradient and pullback Hessian on the fixed reference tangent space $\mathrm{T}_{x^\star}\mathcal{M}$ by
\[
\widetilde g(x)
:=
\mathcal T_{x\to x^\star}\,\grad f(x),
\qquad
\widetilde{\mathcal H}(x)
:=
\mathcal T_{x\to x^\star}\,\Hess f(x)\,\mathcal T_{x^\star\to x}.
\]
At iteration $k$, let
\[
\widetilde{\mathcal P}_k
:=
\mathcal T_{x_k\to x^\star}\,\mathcal P_k\,\mathcal T_{x^\star\to x_k},
\qquad
\widetilde B_k:=\mathcal T_{x_k\to x^\star}(B_k)\subset \mathrm{T}_{x^\star}\mathcal{M},
\]
\[
\widetilde g_k:=\widetilde g(x_k),
\quad
\widetilde{\mathcal H}_k:=\widetilde{\mathcal H}(x_k),
\quad
\widetilde h_k:=\widetilde{\mathcal P}_k^*[\widetilde g_k]\in \widetilde B_k,
\quad
\widetilde{\mathcal J}_k:=\widetilde{\mathcal P}_k^*\,\widetilde{\mathcal H}_k\,\widetilde{\mathcal P}_k:\widetilde B_k\to \widetilde B_k.
\]
Since $\mathcal T_{x^\star\to x}$ is an isometry, adjoints, norms, and the Loewner order are preserved under pullback.

\begin{definition}
  \textbf{(Self-concordant function)}
  A geodesically convex function $f$ is $M_f$-self-concordant on an open and geodesically convex neighborhood $U$ if, for every affinely parameterized geodesic $\gamma\subset U$, the scalar function $\varphi(t)=f(\gamma(t))$ satisfies
  \[
  \left|\varphi^{\prime\prime\prime}(t)\right|
  \le 2M_f\left(\varphi^{\prime\prime}(t)\right)^{3/2},
  \qquad
  \forall t \text{ such that } \gamma(t)\in U.
  \]
  Equivalently,
  \[
  \big|\nabla^{3} f(x)[v,v,v]\big|
  \le 2 M_f \big\langle \Hess f(x)[v], v\big\rangle_x^{3/2},
  \qquad
  \forall x\in U,\ \forall v\in \mathrm{T}_x\mathcal{M}
  \]
  whenever $\Exp_x(tv)\in U$ for all $t\in[0,1]$. Here $\nabla^3 f$ is the third covariant derivative of $f$ with respect to the Levi-Civita connection.
\end{definition}

\begin{corollary}
\cref{ass:smooth} and~\cref{ass:strong-geo-convexity} imply that $f$ is $M_f$-self-concordant with
$
M_f=\frac{1}{2}L_3\,\mu^{-3/2}
$.
\end{corollary}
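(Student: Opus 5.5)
The claim is that \cref{ass:smooth} and \cref{ass:strong-geo-convexity} give $M_f$-self-concordance on $U$ with $M_f=\tfrac12 L_3\mu^{-3/2}$. The plan is to bound the third derivative $\varphi'''$ along a geodesic by $L_3$ times a cube of the velocity norm, and then use the lower Hessian bound to trade that norm for $\varphi''$. Fix an affinely parameterized geodesic $\gamma\subset U$ with $\gamma(t)\in U$, and set $v=\gamma'(t)$ and $x=\gamma(t)$. Since $\gamma$ is a geodesic,
\[
\varphi''(t)=\langle \Hess f(x)[v],v\rangle_x,\qquad \varphi'''(t)=\nabla^3 f(x)[v,v,v].
\]
So it suffices to prove the equivalent covariant form of the definition.

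\textbf{Step 1 (third-derivative bound).} I will show $|\nabla^3 f(x)[v,v,v]|\le L_3\|v\|_x^3$, working with the pullback $\hat f=f\circ R_x$. The pullback Hessian is $L_3$-Lipschitz by \cref{ass:smooth}, item 2, so for every $u$ the directional derivative $\mathrm D(\nabla^2\hat f)(0)[u]$ has operator norm at most $L_3\|u\|_x$. Hence $|\mathrm D^3\hat f(0)[v,v,v]|\le L_3\|v\|_x^3$.

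\textbf{Step 2 (identify pullback and covariant third derivatives).} This is the main obstacle. A second-order retraction agrees with $\Exp_x$ only to second order, so in general $\mathrm D^3\hat f(0)$ differs from the covariant $\nabla^3 f(x)$ by terms involving the third-order defect of $R_x$ paired with $\grad f$. I would avoid this by computing directly along the exponential curve. The curve $c(s)=R_x(sv)$ has zero initial acceleration, but its third covariant derivative may be nonzero. My first attempt is to read the Lipschitz hypothesis as applying to the exponential pullback, which is the convention of \cite{zhang2018cubic} whose Proposition 3.1 the paper already invokes. In that case $\frac{d^3}{ds^3}f(\Exp_x(sv))|_{s=0}=\varphi'''(t)$ exactly, and Step 1 applies verbatim. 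If the retraction genuinely differs from $\Exp_x$, I would absorb the discrepancy into the constant, or else state that \cref{ass:smooth} is understood with $R=\Exp$ here.

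\textbf{Step 3 (trade $\|v\|$ for $\varphi''$).} \cref{ass:strong-geo-convexity} gives $\langle\Hess f(x)[v],v\rangle_x\ge\mu\|v\|_x^2$, hence
\[
\|v\|_x^3\le \mu^{-3/2}\langle \Hess f(x)[v],v\rangle_x^{3/2}.
\]
Combining this with Step 1 yields
\[
|\varphi'''(t)|\le L_3\mu^{-3/2}(\varphi''(t))^{3/2}=2M_f(\varphi''(t))^{3/2},
\]
with $M_f=\tfrac12 L_3\mu^{-3/2}$, which is the claim.
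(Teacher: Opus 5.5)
\textbf{Verdict: the approach is right, but Step 2 leaves a genuine gap, and neither of your fallbacks closes it for the stated constant.}

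Your Steps 1 and 3 are exactly the computation the statement rests on. The paper states the corollary without a separate proof, and $M_f=\tfrac12L_3\mu^{-3/2}$ is precisely what $|\nabla^3 f(x)[v,v,v]|\le L_3\|v\|_x^3\le L_3\mu^{-3/2}\langle\Hess f(x)[v],v\rangle_x^{3/2}$ produces. You locate the gap in Step 2 correctly but do not close it. Let $c(s)=R_x(sv)$, so that $\dot c(0)=v$ and $\tfrac{D}{ds}\dot c(0)=0$. Differentiating $f(c(s))$ three times gives
\[
\mathrm D^3\hat f(0)[v,v,v]=\nabla^3 f(x)[v,v,v]+\Big\langle \grad f(x),\,\tfrac{D^2}{ds^2}\dot c(s)\big|_{s=0}\Big\rangle_x .
\]
Item 2 of \cref{ass:smooth} is stated for the pullback through the second-order retraction of item 3. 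It therefore bounds the left-hand side by $L_3\|v\|_x^3$, but it does not by itself bound $\nabla^3 f(x)[v,v,v]$. On $U$ the gradient factor vanishes only at $x^\star$.

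\textbf{Your first fallback.} Reading the hypothesis for $f\circ\Exp_x$ is an added assumption, not a derivation. It is also not the paper's convention, since the paper applies Proposition~3.1 of \cite{zhang2018cubic} to $f\circ R_x$. Under that assumption your argument is complete, and the same holds for any retraction whose rays satisfy $\tfrac{D^2}{ds^2}\dot c(0)=0$. It even simplifies: $\varphi(t+s)=\hat f_{\gamma(t)}(s\gamma'(t))$ with $\hat f_{\gamma(t)}=f\circ\Exp_{\gamma(t)}$, so $\varphi''$ is Lipschitz with constant $L_3\|\gamma'\|^3$ and no covariant third derivative is needed.

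\textbf{Your second fallback.} Absorbing the defect into the constant cannot give the stated value. The defect is cubic in $v$, so you get $|\nabla^3 f(x)[v,v,v]|\le (L_3+K\sup_{U}\|\grad f\|)\|v\|_x^3$, where $K$ bounds $v\mapsto\tfrac{D^2}{ds^2}\dot c(0)$ on unit vectors over $U$. That is self-concordance with the generally larger constant $\tfrac12(L_3+K\sup_U\|\grad f\|)\mu^{-3/2}$. This weaker conclusion would still serve the local analysis in \cref{thm:riem-local-exp}, where $M_f$ enters only qualitatively after shrinking $U$. It is not, however, the claimed constant.

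\textbf{Regularity.} Step 1 presupposes that $\nabla^2\hat f$ is differentiable at $0$, whereas \cref{ass:smooth} only gives $f\in C^2$. You should either assume $f\in C^3$, as the definition of self-concordance implicitly does, or settle for the inequality almost everywhere along $\gamma$.
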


The following proposition records the self-concordant estimates used below. It is the geodesic counterpart of the classical Euclidean self-concordant property~\cite[{Theorem 5.1.7, Theorem 5.1.12}]{nesterov2018lectures}.
\begin{proposition}
\label{prop:self-concordant}
Let $f$ be geodesically convex and $M_f$-self-concordant on an open geodesically convex neighborhood $U$.
\begin{enumerate}
\item[1.] For any $x,y\in U$, let
\[
v:=\Exp_x^{-1}(y)\in \mathrm{T}_x\mathcal{M},
\qquad
\delta_x(y):=\big\langle \Hess f(x)[v],v\big\rangle_x^{\frac{1}{2}}.
\]
If
$
M_f\,\delta_x(y)<1,
$
then
\[
\bigl(1-M_f\delta_x(y)\bigr)^2 \Hess f(x)
\preceq
\mathcal T_{y\to x}\,\Hess f(y)\,\mathcal T_{x\to y}
\preceq
\bigl(1-M_f\delta_x(y)\bigr)^{-2}\Hess f(x).
\]

\item[2.] For any $y\in U$, let
\[
\Lambda(y):=
\big\langle
\grad f(y),\,
[\Hess f(y)]^{-1}[\grad f(y)]
\big\rangle_y^{\frac{1}{2}}.
\]
If
$
M_f\,\Lambda(y)<1,
$
then
\[
f(y)-f(x^\star)\le \frac{1}{M_f^2}\,\omega_*\bigl(M_f\Lambda(y)\bigr),
\]
where
\[
\omega_*(t):=-t-\ln(1-t),
\quad t\in[0,1).
\]
In addition, for every $\gamma>0$ and every
$
t\in\Bigl[0,\frac{\gamma}{1+\gamma}\Bigr],
$
we have
$
\omega_*(t)\le \frac{1+\gamma}{2}\,t^2.
$
\end{enumerate}
\end{proposition}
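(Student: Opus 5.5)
The plan is to transport everything to a single tangent space and run the Euclidean self-concordance argument there. Part~1 first, then Part~2.

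\emph{Part~1.} Fix $x,y\in U$ and let $\gamma(t)=\Exp_x(tv)$, $t\in[0,1]$, with $v=\Exp_x^{-1}(y)$; this geodesic stays in $U$ by geodesic convexity. The transport is taken to be parallel transport $P_t$ along $\gamma$, so that $\mathcal T_{y\to x}\Hess f(y)\mathcal T_{x\to y}$ is the pullback of $\Hess f(\gamma(1))$. For a fixed $u\in\mathrm T_x\mathcal M$ with parallel field $u(t)=P_t u$, set
\[
\psi_u(t):=\langle \Hess f(\gamma(t))[u(t)],u(t)\rangle_{\gamma(t)}.
\]
Since $\nabla$ is metric and $u(t)$, $\dot\gamma(t)$ are parallel, $\psi_u'(t)=\nabla^3 f(\gamma(t))[\dot\gamma,u,u]$. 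The key inequality is the polarized self-concordance bound
\[
|\nabla^3 f[\dot\gamma,u,u]|\le 2M_f\,\langle \Hess f[\dot\gamma],\dot\gamma\rangle^{1/2}\langle \Hess f[u],u\rangle .
\]
In the Euclidean case this follows from the cubic bound by a standard symmetric trilinear form argument (Nesterov, Lemma~5.1.2). It applies pointwise here because $\nabla^3 f(x)$ is symmetric in its last two slots, and in all three slots up to curvature terms. \emph{This symmetry is the step I expect to be the main obstacle.} If full symmetry fails, I would either restrict to the symmetric part or assume the definition is stated for the symmetrized third derivative, as is implicit in the paper's ``equivalently''.

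With that bound, the scalar $\varphi(t)=\langle \Hess f(\gamma)[\dot\gamma],\dot\gamma\rangle$ satisfies $|\varphi'|\le 2M_f\varphi^{3/2}$. Hence $|(\varphi^{-1/2})'|\le M_f$, so $\varphi(t)^{1/2}\le \delta_x(y)/(1-tM_f\delta_x(y))$. Then $|(\ln\psi_u)'|\le 2M_f\varphi^{1/2}\le 2M_f\delta/(1-tM_f\delta)$. Integrating over $[0,1]$ gives
\[
(1-M_f\delta)^2\psi_u(0)\le \psi_u(1)\le (1-M_f\delta)^{-2}\psi_u(0),
\]
which is exactly the claimed Loewner sandwich, since $u$ was arbitrary.

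\emph{Part~2.} I would compare $f$ along geodesics from $y$ with the Euclidean lower bound. For any $z\in U$, let $w=\Exp_y^{-1}(z)$ and $r(t)=\langle \Hess f(\Exp_y(tw))[\dot\gamma],\dot\gamma\rangle^{1/2}$. From $|(r^{-1})'|\le M_f$ we get $r(t)\ge r(0)/(1+tM_f r(0))$. Integrating twice yields
\[
f(z)\ge f(y)+\langle \grad f(y),w\rangle_y+M_f^{-2}\,\omega(M_f\|w\|_y),\qquad \omega(s)=s-\ln(1+s),
\]
where $\|w\|_y^2=\langle \Hess f(y)w,w\rangle$. Applying this at $z=x^\star$ and using Cauchy--Schwarz in the Hessian norm, $\langle \grad f(y),w\rangle\ge -\Lambda(y)\|w\|_y$. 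Minimizing over $s=\|w\|_y$ uses the Fenchel conjugacy $\sup_s\{\lambda s-\omega(s)\}=\omega_*(\lambda)$ for $\lambda<1$, which gives $f(y)-f(x^\star)\le M_f^{-2}\omega_*(M_f\Lambda(y))$.

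The last claim is elementary. Write $\omega_*(t)=\sum_{k\ge2}t^k/k\le \frac{t^2}{2}\cdot\frac{1}{1-t}$. For $t\le\gamma/(1+\gamma)$ we have $1/(1-t)\le1+\gamma$, so $\omega_*(t)\le\frac{1+\gamma}{2}t^2$.
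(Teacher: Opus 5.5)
The paper gives no proof of this proposition; it only calls it the geodesic counterpart of Nesterov's Theorems~5.1.7 and~5.1.12. Your Part~2 and the closing inequality $\omega_*(t)\le\frac{1+\gamma}{2}t^2$ are correct and are the standard Euclidean argument carried over faithfully. The lower bound $f(z)\ge f(y)+\langle\grad f(y),w\rangle_y+M_f^{-2}\omega(M_f\|w\|_y)$ uses self-concordance only along the single geodesic $t\mapsto\Exp_y(tw)$, so no symmetry of $\nabla^3 f$ is needed. Part~1, however, has a genuine gap exactly where you flagged it, and neither fallback closes it. What you must bound is $\psi_u'(t)=\nabla^3 f[\dot\gamma,u,u]$, an off-diagonal value of the \emph{unsymmetrized} tensor. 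With $S:=\mathrm{Sym}\,\nabla^3 f$ one has $S[v,v,v]=\nabla^3 f[v,v,v]$, so stating the definition for the symmetrized derivative is literally the same hypothesis. The trilinear-form lemma then only gives $|S[\dot\gamma,u,u]|\le 2M_f\varphi^{1/2}\psi_u$. Because $\nabla^3 f$ is symmetric in its last two slots, $\nabla^3 f[\dot\gamma,u,u]-S[\dot\gamma,u,u]=\tfrac23\bigl(\nabla^3 f[\dot\gamma,u,u]-\nabla^3 f[u,\dot\gamma,u]\bigr)$. By the Ricci identity this equals $\tfrac23\langle\mathrm{Rm}(\dot\gamma,u)\grad f,u\rangle$ (up to sign convention), a gradient--curvature term on which $M_f$ gives no control. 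You cannot ``restrict to the symmetric part'': $\psi_u'$ is the unsymmetrized quantity.

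No argument can close this under the paper's diagonal definition, because Part~1 is false in general on curved manifolds. On the hyperbolic plane take $f(z)=\langle g,\Exp_x^{-1}(z)\rangle_x+\tfrac12\operatorname{dist}(x,z)^2$ with $\|g\|_x=1$.
\begin{itemize}
\item Along every geodesic through $x$ this is quadratic in $t$, so $\Hess f(x)=\mathrm{Id}_x$ and $S=0$ at $x$.
\item Since $S$ is smooth and vanishes at $x$, on $U=B(x,\epsilon)$ the function is strongly geodesically convex and $M_f$-self-concordant with $M_f=O(\epsilon)$.
\item For $y=\Exp_x(\tfrac{\epsilon}{2}g)$ we have $\delta_x(y)=\tfrac{\epsilon}{2}$, so $M_f\delta_x(y)=O(\epsilon^2)$. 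Part~1 would therefore pin the transported $\Hess f(y)$ within a factor $1\pm O(\epsilon^2)$ of $\mathrm{Id}_x$.
\item In geodesic polar coordinates about $x$, let $e_\theta$ be the parallel transport of a unit $u\perp g$. One computes $\langle\Hess f(\Exp_x(rg))[e_\theta],e_\theta\rangle=r\coth r+(\sinh r\cosh r-r)/\sinh^2 r=1+\tfrac23 r+O(r^2)$. This agrees with $\psi_u'(0)=\tfrac23$ from the Ricci identity.
\item The radial entry stays $1$, so $\tr\Hess f(y)=2+\tfrac{\epsilon}{3}+O(\epsilon^2)$. This contradicts Part~1 for small $\epsilon$, for any isometric transport.
\end{itemize}
So an extra hypothesis is required. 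One option is to take as the definition the mixed bound $|\nabla^3 f(z)[w,u,u]|\le 2M_f\langle\Hess f(z)[w],w\rangle_z^{1/2}\langle\Hess f(z)[u],u\rangle_z$ for all $z\in U$ and $w,u\in\mathrm T_z\mathcal M$; under it your integration argument goes through verbatim. The other is to keep the diagonal definition and replace $M_f$ in Part~1 by $M_f+\tfrac13 K\sup_U\|\grad f\|\,\mu^{-3/2}$, where $K$ bounds the curvature tensor on $U$ and $\Hess f\succeq\mu\,\mathrm{Id}$ there. For the paper's only use of Part~1, the $(1+\phi)$-sandwich near $x^\star$ in the local analysis, continuity of the transported Hessian at $x^\star$ already suffices.
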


To analyze the one-step decrease, it is convenient to separate the ambient gradient from its sketched counterpart.
\begin{lemma}
\label{lem:manifold-descent-lemma}
Let $\sigma_k\ge M$ in Algorithm~\ref{alg:sscn-riem}, where $M$ is defined in \cref{rem:cubic-upper-bound}. Then the exact cubic step satisfies
\[
f(x_k)-f(x_{k+1})
\ge
\frac{1}{2}\,
\Big\langle
h_k,\
\bigl(\mathcal J_k+\sqrt{\sigma_k/2}\,\|h_k\|_{x_k}^{\frac{1}{2}}\,\mathrm{Id}_{B_k}\bigr)^{-1}[h_k]
\Big\rangle_{x_k}.
\]
\end{lemma}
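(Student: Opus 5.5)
The plan is to reduce the cubic step to a comparison with a specific trial point in $B_k$ lying along the regularized Newton direction, and then to use the cubic upper bound of \cref{rem:cubic-upper-bound}. Since $\sigma_k\ge M$, that remark gives $f(x_{k+1})\le f(x_k)+\phi_k(v_k)$, where $\phi_k$ is the cubic model from Algorithm~\ref{alg:sscn-riem}. It therefore suffices to show
\[
-\min_{v\in B_k}\phi_k(v)\ \ge\ \tfrac12\langle h_k,(\mathcal J_k+\tau\,\mathrm{Id}_{B_k})^{-1}[h_k]\rangle_{x_k},\qquad \tau:=\sqrt{\sigma_k/2}\,\|h_k\|_{x_k}^{1/2}.
\]
Here the norm of $v\in B_k$ is the $g_{x_k}$ norm, and we use $\mathcal J_k\succeq 0$, which follows from \cref{lem:riem-21}-type positivity since $\mathcal H_{x_k}\succeq0$ by geodesic convexity.

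For the reduction, I would take the trial point $\bar v:=-(\mathcal J_k+\tau\mathrm{Id})^{-1}[h_k]$ and write $G:=\langle h_k,(\mathcal J_k+\tau\mathrm{Id})^{-1}h_k\rangle$. A direct expansion gives
\[
\phi_k(\bar v)=-G+\tfrac12\langle \bar v,\mathcal J_k\bar v\rangle+\tfrac{\sigma_k}{6}\|\bar v\|^3
=-\tfrac12 G-\tfrac{\tau}{2}\|\bar v\|^2+\tfrac{\sigma_k}{6}\|\bar v\|^3,
\]
using $\langle\bar v,(\mathcal J_k+\tau)\bar v\rangle=G$. This leaves two tasks: show that $\tfrac{\sigma_k}{6}\|\bar v\|^3\le\tfrac{\tau}{2}\|\bar v\|^2$, i.e. $\|\bar v\|\le 3\tau/\sigma_k$, and note $\phi_k(v_k)\le\phi_k(\bar v)$.

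For the norm bound, $\mathcal J_k\succeq0$ gives $\|\bar v\|\le\|h_k\|/\tau$. With the chosen $\tau$ we get $\|h_k\|/\tau=\sqrt{2/\sigma_k}\,\|h_k\|^{1/2}$ and $3\tau/\sigma_k=\tfrac{3}{\sqrt2}\sigma_k^{-1/2}\|h_k\|^{1/2}$. The first is at most the second because $\sqrt2\le 3/\sqrt2$. Hence $\phi_k(\bar v)\le -\tfrac12 G$, and so $f(x_k)-f(x_{k+1})\ge -\phi_k(v_k)\ge \tfrac12G$.

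The main point to check is that the cubic upper bound really applies to $v_k$. This needs $\|v_k\|\le\bar\rho$, which is given by the preceding lemma when $\sigma_k\ge\max\{M_\rho,1\}$, together with $M$ being uniform as in \cref{rem:cubic-upper-bound}. If $h_k=0$, both sides vanish and the claim is trivial; the case $\tau=0$ is handled this way.
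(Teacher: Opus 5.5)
Your proof is correct, but it takes a different route from the paper's.

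\textbf{The paper's route.} The paper works with the exact minimizer $\hat v$ and its stationarity equation
\[
-h_k=\bigl(\mathcal J_k+\tfrac{\sigma_k}{2}\|\hat v\|_{x_k}\mathrm{Id}_{B_k}\bigr)[\hat v].
\]
Pairing this with $\hat v$ and inserting it into the cubic upper bound gives the step-dependent estimate
\[
f(x_k)-f(x_{k+1})\ge\tfrac12\bigl\langle h_k,\bigl(\mathcal J_k+\tfrac{\sigma_k}{2}\|\hat v\|_{x_k}\mathrm{Id}_{B_k}\bigr)^{-1}[h_k]\bigr\rangle_{x_k}.
\]
The paper then derives $\tfrac{\sigma_k}{2}\|\hat v\|_{x_k}\le\sqrt{\sigma_k/2}\,\|h_k\|_{x_k}^{1/2}$ from $\|h_k\|_{x_k}\ge\tfrac{\sigma_k}{2}\|\hat v\|_{x_k}^2$. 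Finally, operator monotonicity of $A\mapsto A^{-1}$ replaces the shift $\tfrac{\sigma_k}{2}\|\hat v\|_{x_k}$ by $\tau$.

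\textbf{Your route.} You never use the optimality condition or operator monotonicity. You only use $\phi_k(v_k)\le\phi_k(\bar v)$ for the explicit regularized-Newton trial point $\bar v=-(\mathcal J_k+\tau\,\mathrm{Id}_{B_k})^{-1}[h_k]$. You then verify directly that the cubic term at $\bar v$ is absorbed by the $\tfrac{\tau}{2}\|\bar v\|_{x_k}^2$ slack, via $\|\bar v\|_{x_k}\le\|h_k\|_{x_k}/\tau\le 3\tau/\sigma_k$. This is the classical trial-point comparison.

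\textbf{What each buys.} Your argument is more elementary. It is also more robust: the conclusion holds for any approximate subproblem solution whose model value is no worse than $\phi_k(\bar v)$ and to which the cubic upper bound applies. This is relevant when the cubic subproblem is solved iteratively rather than exactly. The paper's argument needs an exact stationary point. In return it yields the intermediate bound above, expressed through the actual step length, which is at least as strong as the stated one. Both proofs rest on the same cubic upper bound and on the same a priori bound $\|v_k\|_{x_k}\le\bar\rho$.
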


\begin{proof}
Fix $x\in \mathcal M$ and consider the cubic model
\[
\phi(v)
=
\langle h_{x,B,\Xi},v\rangle_x
+\frac{1}{2}\langle \mathcal J_{x,B,\Xi}[v],v\rangle_x
+\frac{\sigma}{6}\|v\|_x^3,
\qquad
v\in B.
\]
Let $\hat v\in \arg\min_{v\in B}\phi(v)$, and set
\[
\hat\eta_x:=\mathcal P_{x,B,\Xi}[\hat v],
\qquad
x^+:=R_x(\hat\eta_x).
\]
The first-order optimality condition for $\phi$ yields
\begin{equation}
-\;h_{x,B,\Xi}
=
\bigl(\mathcal J_{x,B,\Xi}+\tfrac{\sigma}{2}\|\hat v\|_x\,\mathrm{Id}_B\bigr)[\hat v].
\label{eq:opt-local}
\end{equation}
Taking the inner product of~\eqref{eq:opt-local} with $\hat v$ gives
\begin{equation}
\langle h_{x,B,\Xi},\hat v\rangle_x
+
\langle \mathcal J_{x,B,\Xi}[\hat v],\hat v\rangle_x
+
\tfrac{\sigma}{2}\|\hat v\|_x^3
=
0.
\label{eq:opt-id-local}
\end{equation}

Because $R$ is a second-order retraction and the Hessian is Lipschitz in the sketched directions, the cubic upper bound along the retraction curve yields
\begin{equation}
f(x^+)
\le
f(x)
+
\langle \grad f(x),\hat\eta_x\rangle_x
+
\frac{1}{2}\langle \Hess f(x)[\hat\eta_x],\hat\eta_x\rangle_x
+
\frac{M}{6}\|\hat v\|_x^3.
\label{eq:cubic-upper-local}
\end{equation}
It follows from~\eqref{eq:cubic-upper-local},~\eqref{eq:opt-id-local} and the facts
\[
\langle \grad f(x),\hat\eta_x\rangle_x
=
\langle \mathcal P_{x,B,\Xi}^*\grad f(x),\hat v\rangle_x
=
\langle h_{x,B,\Xi},\hat v\rangle_x,
\]
and
$
\langle \Hess f(x)[\hat\eta_x],\hat\eta_x\rangle_x
=
\langle \mathcal J_{x,B,\Xi}[\hat v],\hat v\rangle_x
$
that
\begin{align}
f(x)-f(x^+)
&\ge
-\langle h_{x,B,\Xi},\hat v\rangle_x
-\frac{1}{2}\langle \mathcal J_{x,B,\Xi}[\hat v],\hat v\rangle_x
-\frac{M}{6}\|\hat v\|_x^3 \nonumber\\
&=
\frac{1}{2}\langle \mathcal J_{x,B,\Xi}[\hat v],\hat v\rangle_x
+
\Bigl(\frac{\sigma}{2}-\frac{M}{6}\Bigr)\|\hat v\|_x^3 \nonumber\\
&\ge
\frac{1}{2}
\Big\langle
\bigl(\mathcal J_{x,B,\Xi}+\tfrac{\sigma}{2}\|\hat v\|_x\,\mathrm{Id}_B\bigr)[\hat v],\hat v
\Big\rangle_x.
\label{eq:pred-red-local}
\end{align}
Combining~\eqref{eq:pred-red-local} with~\eqref{eq:opt-local} yields
\begin{equation}
f(x)-f(x^+)
\ge
\frac{1}{2}
\Big\langle
h_{x,B,\Xi},
\bigl(\mathcal J_{x,B,\Xi}+\tfrac{\sigma}{2}\|\hat v\|_x\,\mathrm{Id}_B\bigr)^{-1}[h_{x,B,\Xi}]
\Big\rangle_x.
\label{eq:key-local}
\end{equation}

Next, by~\eqref{eq:opt-local},
it holds that
\[
\|h_{x,B,\Xi}\|_x
=
\big\|
\bigl(\mathcal J_{x,B,\Xi}+\tfrac{\sigma}{2}\|\hat v\|_x\,\mathrm{Id}_B\bigr)[\hat v]
\big\|_x
\ge
\frac{\sigma}{2}\|\hat v\|_x^2,
\]
and hence
\[
\frac{\sigma}{2}\|\hat v\|_x
\le
\sqrt{\frac{\sigma}{2}}\,\|h_{x,B,\Xi}\|_x^{\frac{1}{2}}.
\]
Therefore,
\[
\mathcal J_{x,B,\Xi}+\frac{\sigma}{2}\|\hat v\|_x\,\mathrm{Id}_B
\preceq
\mathcal J_{x,B,\Xi}+\sqrt{\frac{\sigma}{2}}\,\|h_{x,B,\Xi}\|_x^{\frac{1}{2}}\,\mathrm{Id}_B.
\]
Since $A\mapsto A^{-1}$ is operator-monotone decreasing on positive definite operators,~\eqref{eq:key-local} implies
\[
f(x)-f(x^+)
\ge
\frac{1}{2}
\Big\langle
h_{x,B,\Xi},
\bigl(\mathcal J_{x,B,\Xi}+\sqrt{\tfrac{\sigma}{2}}\,\|h_{x,B,\Xi}\|_x^{\frac{1}{2}}\,\mathrm{Id}_B\bigr)^{-1}[h_{x,B,\Xi}]
\Big\rangle_x.
\]
Applying this with $x=x_k$, $\sigma=\sigma_k$, $h_{x,B,\Xi}=h_k$, and $x^+=x_{k+1}$ proves the claim.
\end{proof}

\begin{theorem}
\label{thm:riem-local-exp}
Suppose \cref{ass:smooth}, \cref{ass:strong-geo-convexity}, and \cref{ass:local-transported-resampling} hold. Let $\mathcal{H}^\star:=\Hess f(x^\star)$ and define
\[
\zeta
:=
\lambda_{\min}\!\Big(
(\mathcal{H}^\star)^{\frac{1}{2}}\,
\mathbb E\big[
\mathcal P_{x^\star,B^\star,\Xi^\star}
\bigl(\mathcal P_{x^\star,B^\star,\Xi^\star}^{*}\mathcal{H}^\star\mathcal P_{x^\star,B^\star,\Xi^\star}\bigr)^{-1}
\mathcal P_{x^\star,B^\star,\Xi^\star}^{*}
\big]\,
(\mathcal{H}^\star)^{\frac{1}{2}}
\Big).
\]
Then for any $\delta\in(0,1)$ there exists a neighborhood $V\subset U$ such that if $x_0\in V$, then
\[
\mathbb E\bigl[f(x_k)-f(x^\star)\bigr]
\le
\bigl(1-(1-\delta)\zeta\bigr)^k\,\bigl(f(x_0)-f(x^\star)\bigr).
\]
\end{theorem}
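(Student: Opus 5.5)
The plan is to obtain a one-step contraction in conditional expectation,
$\mathbb E[f(x_{k+1})-f(x^\star)\mid x_k]\le(1-(1-\delta)\zeta)(f(x_k)-f(x^\star))$, valid as long as $x_k$ lies in a small neighborhood $V$. The result then follows by induction and the tower property.

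\textbf{Choice of $V$ and invariance.} Take $V$ to be a sublevel set $\{x\in U: f(x)\le f(x^\star)+\epsilon\}$ contained in a small geodesic ball around $x^\star$; strong geodesic convexity (\cref{ass:strong-geo-convexity}) makes this possible. By \cref{lem:manifold-descent-lemma} (with $\sigma_k\ge M$), $f(x_{k+1})\le f(x_k)$ almost surely. Hence $V$ is invariant and every iterate stays in $V$. On $V$ the quantities $\|\grad f(x_k)\|_{x_k}$, $\delta_{x^\star}(x_k)$ and $\Lambda(x_k)$ are all uniformly small as $\epsilon\to 0$.

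\textbf{Pullback and Hessian comparison.} Starting from \cref{lem:manifold-descent-lemma}, transport everything to $\mathrm T_{x^\star}\mathcal M$ with $\mathcal T_{x_k\to x^\star}$. Since this map is an isometry, the lower bound keeps the same form in the tilde quantities:
\[
f(x_k)-f(x_{k+1})\ge\tfrac12\big\langle \widetilde g_k,\ \widetilde{\mathcal P}_k(\widetilde{\mathcal J}_k+\tau_k\mathrm{Id})^{-1}\widetilde{\mathcal P}_k^*\widetilde g_k\big\rangle_{x^\star},
\qquad
\tau_k:=\sqrt{\sigma_k/2}\,\|h_k\|_{x_k}^{1/2}.
\]
Next bound $\widetilde{\mathcal H}_k$ by $\mathcal H^\star$ using part 1 of \cref{prop:self-concordant}, with the self-concordance constant $M_f=\tfrac12L_3\mu^{-3/2}$ from the Corollary. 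This gives $\widetilde{\mathcal H}_k\preceq(1-\varepsilon)^{-2}\mathcal H^\star$ with $\varepsilon=M_f\delta_{x^\star}(x_k)$. Consequently $\widetilde{\mathcal J}_k\preceq(1-\varepsilon)^{-2}\widetilde{\mathcal P}_k^*\mathcal H^\star\widetilde{\mathcal P}_k$.

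On $V$ we also have $\tau_k\le\bar\tau$ with $\bar\tau\to0$ as $\epsilon\to0$; this uses $\|h_k\|\le\|\mathcal R\|_{\mathrm{op}}\|\grad f(x_k)\|$, and if $\|\mathcal R\|_{\mathrm{op}}$ is unbounded I would truncate on a high-probability event. Operator monotonicity of inversion then yields
\[
\widetilde{\mathcal P}_k(\widetilde{\mathcal J}_k+\tau_k\mathrm{Id})^{-1}\widetilde{\mathcal P}_k^*\succeq(1-\varepsilon)^2\,\widetilde{\mathcal P}_k\big(\mathcal A_k+\bar\tau\mathrm{Id}\big)^{-1}\widetilde{\mathcal P}_k^*,
\qquad
\mathcal A_k:=\widetilde{\mathcal P}_k^*\mathcal H^\star\widetilde{\mathcal P}_k .
\]

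\textbf{Taking expectations — the main obstacle.} By \cref{ass:local-transported-resampling}, conditional on $x_k$ the operator $\widetilde{\mathcal P}_k$ has the law of $\mathcal P_{x^\star,B^\star,\Xi^\star}$. The conditional expectation of the right-hand side is therefore a deterministic operator $\mathcal E(\bar\tau)$. The difficulty is that $\|\mathcal A_k^{-1}\|$ is random and unbounded, so the regularization cannot be removed by a uniform estimate. Instead, note that $\mathcal E(\bar\tau)$ increases monotonically, in the Loewner order, to $\mathcal E(0)=\mathbb E[\mathcal P(\mathcal P^*\mathcal H^\star\mathcal P)^{-1}\mathcal P^*]$ as $\bar\tau\downarrow0$. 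The monotone convergence theorem, applied entrywise in a basis, gives this convergence, and the limit is finite: $\mathcal P(\mathcal P^*\mathcal H^\star\mathcal P)^{-1}\mathcal P^*\preceq\mathcal H^{\star\,-1}$ by \cref{lem:riem-21}. Since the dimension is finite, for $\bar\tau$ small we get
\[
(\mathcal H^\star)^{1/2}\mathcal E(\bar\tau)(\mathcal H^\star)^{1/2}\succeq(1-\delta/3)\,\zeta\,\mathrm{Id}.
\]
It follows that the expected decrease is at least $\tfrac12(1-\varepsilon)^2(1-\delta/3)\zeta\,\langle\widetilde g_k,(\mathcal H^\star)^{-1}\widetilde g_k\rangle$. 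Applying part 1 of \cref{prop:self-concordant} once more converts this quantity into a bound of the form $\ge(1-\varepsilon)^4(1-\delta/3)\zeta\,\tfrac12\Lambda(x_k)^2$.

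\textbf{Closing.} Part 2 of \cref{prop:self-concordant}, with $\gamma$ small, gives $f(x_k)-f(x^\star)\le\tfrac{1+\gamma}{2}\Lambda(x_k)^2$. Combining this with the previous bound,
\[
\mathbb E[f(x_k)-f(x_{k+1})\mid x_k]\ge\frac{(1-\varepsilon)^4(1-\delta/3)}{1+\gamma}\,\zeta\,\big(f(x_k)-f(x^\star)\big).
\]
Shrinking $V$ so that $\varepsilon$ and $\gamma$ are small makes the prefactor at least $1-\delta$. Subtracting $f(x^\star)$ rearranges this into the one-step contraction stated at the outset. Taking total expectations and inducting on $k$ completes the proof.
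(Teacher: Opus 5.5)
Your proposal is correct and follows essentially the same route as the paper's proof. Both arguments pull the descent lemma back to $\mathrm{T}_{x^\star}\mathcal M$, compare $\widetilde{\mathcal H}_k$ with $\mathcal H^\star$ via self-concordance, remove the small ridge term through the monotone convergence $S_\nu\uparrow S_0$, and close with the $\omega_*$ bound from part 2 of the self-concordance proposition. Your sublevel-set choice of $V$ and your truncation on $\{\|\mathcal R\|_{\mathrm{op}}\le K\}$ also make explicit two points the paper only asserts. The truncation is valid because the per-step decrease is nonnegative and $\mathcal P(\mathcal P^*\mathcal H^\star\mathcal P+\nu\,\mathrm{Id})^{-1}\mathcal P^*\preceq(\mathcal H^\star)^{-1}$ is uniformly bounded.
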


\begin{proof}
By isometric pullback, \cref{lem:manifold-descent-lemma} can be rewritten on the reference tangent space $\mathrm{T}_{x^\star}\mathcal{M}$ as
\begin{equation}
f(x_k)-f(x_{k+1})
\ge
\frac{1}{2}
\Big\langle
\widetilde h_k,\
\bigl(\widetilde{\mathcal J}_k+\tilde{\mu}_k\mathrm{Id}_{\widetilde B_k}\bigr)^{-1}[\widetilde h_k]
\Big\rangle_{x^\star},
\label{eq:descent-pulledback}
\end{equation}
where
$
\tilde{\mu}_k
:=
\sqrt{\sigma_k/2}\,\|\widetilde h_k\|_{x^\star}^{\frac{1}{2}}.
$
For any $\nu>0$, define
\[
S_{\nu}
:=
(\mathcal{H}^\star)^{\frac{1}{2}}\,
\mathbb E\Big[
\mathcal P_{x^\star,B^\star,\Xi^\star}
\bigl(
\mathcal P_{x^\star,B^\star,\Xi^\star}^{*}\mathcal{H}^\star\mathcal P_{x^\star,B^\star,\Xi^\star}
+\nu\,\mathrm{Id}_{B^\star}
\bigr)^{-1}
\mathcal P_{x^\star,B^\star,\Xi^\star}^{*}
\Big]\,
(\mathcal{H}^\star)^{\frac{1}{2}}.
\]
As $\nu\to 0$, the operators $S_{\nu}$ increase monotonically in the Loewner order to
\[
S_0
=
(\mathcal{H}^\star)^{\frac{1}{2}}\,
\mathbb E\Big[
\mathcal P_{x^\star,B^\star,\Xi^\star}
\bigl(
\mathcal P_{x^\star,B^\star,\Xi^\star}^{*}\mathcal{H}^\star\mathcal P_{x^\star,B^\star,\Xi^\star}
\bigr)^{-1}
\mathcal P_{x^\star,B^\star,\Xi^\star}^{*}
\Big]\,
(\mathcal{H}^\star)^{\frac{1}{2}},
\]
and therefore $\lambda_{\min}(S_{\nu})$ converges to $\lambda_{\min}(S_0)=\zeta$ as $\nu \to 0$.
Fix $\delta\in(0,1)$. Choose $\tau,\phi,\gamma>0$ such that
\begin{equation}
\frac{1-\tau}{(1+\phi)^2(1+\gamma)}\ge 1-\delta.
\label{eq:local-parameter-choice}
\end{equation}
Next, choose $\nu>0$ small enough such that
\begin{equation}
\lambda_{\min}(S_{\nu})\ge (1-\tau)\zeta.
\label{eq:Smu-close-to-zeta}
\end{equation}
By \cref{prop:self-concordant}.1, after shrinking $V\subset U$ if necessary, for all $x\in V$,
\begin{equation}
(1+\phi)^{-1}\mathcal{H}^\star
\preceq
\widetilde{\mathcal H}(x)
\preceq
(1+\phi)\mathcal{H}^\star.
\label{eq:Hessian-comparison-pulledback}
\end{equation}
In particular, for $x=x_k$, we have
\begin{equation}
\widetilde{\mathcal P}_k^*\,\widetilde{\mathcal H}_k\,\widetilde{\mathcal P}_k
\preceq
(1+\phi)\,\widetilde{\mathcal P}_k^*\mathcal{H}^\star\,\widetilde{\mathcal P}_k.
\label{eq:reduced-Hessian-comparison}
\end{equation}
Moreover, since $\widetilde g(x^\star)=0$ and $x\mapsto \widetilde g(x)$ is continuous, we have $\widetilde h_k\to 0$ as $x_k\to x^\star$. Hence, after shrinking $V$ once more if needed, we may assume that whenever $x_k\in V$,
\begin{equation}
\tilde{\mu}_k
=
\sqrt{\sigma_k/2}\,\|\widetilde h_k\|_{x^\star}^{\frac{1}{2}}
\le
(1+\phi)\nu.
\label{eq:augmentation-small}
\end{equation}
Combining~\eqref{eq:reduced-Hessian-comparison} and~\eqref{eq:augmentation-small}, we obtain
\[
\widetilde{\mathcal J}_k+\tilde{\mu}_k\mathrm{Id}_{\widetilde B_k}
\preceq
(1+\phi)\bigl(\widetilde{\mathcal P}_k^*\mathcal{H}^\star\,\widetilde{\mathcal P}_k+\nu\,\mathrm{Id}_{\widetilde B_k}\bigr),
\]
and therefore
\begin{equation}
\bigl(\widetilde{\mathcal J}_k+\tilde{\mu}_k\mathrm{Id}_{\widetilde B_k}\bigr)^{-1}
\succeq
\frac{1}{1+\phi}
\bigl(\widetilde{\mathcal P}_k^*\mathcal{H}^\star\,\widetilde{\mathcal P}_k+\nu\,\mathrm{Id}_{\widetilde B_k}\bigr)^{-1}.
\label{eq:inverse-comparison}
\end{equation}
Substituting~\eqref{eq:inverse-comparison} into~\eqref{eq:descent-pulledback} and using $\widetilde h_k=\widetilde{\mathcal P}_k^*[\widetilde g_k]$ yields
\[
f(x_k)-f(x_{k+1})
\ge
\frac{1}{2(1+\phi)}
\Big\langle
\widetilde g_k,\
\widetilde{\mathcal P}_k
\bigl(\widetilde{\mathcal P}_k^*\mathcal{H}^\star\,\widetilde{\mathcal P}_k+\nu\,\mathrm{Id}_{\widetilde B_k}\bigr)^{-1}
\widetilde{\mathcal P}_k^*[\widetilde g_k]
\Big\rangle_{x^\star}.
\]
Taking conditional expectation and using \cref{ass:local-transported-resampling}, we obtain
\begin{align}
&\quad \mathbb E\!\left[f(x_k)-f(x_{k+1})\,\middle|\,x_k\right] \\
&\ge
\frac{1}{2(1+\phi)}
\Big\langle
\widetilde g_k,\
\mathbb E\Big[
\mathcal P_{x^\star,B^\star,\Xi^\star}
\bigl(
\mathcal P_{x^\star,B^\star,\Xi^\star}^{*}\mathcal{H}^\star\mathcal P_{x^\star,B^\star,\Xi^\star}
+\nu\,\mathrm{Id}_{B^\star}
\bigr)^{-1}
\mathcal P_{x^\star,B^\star,\Xi^\star}^{*}
\Big][\widetilde g_k]
\Big\rangle_{x^\star} \nonumber\\
&=
\frac{1}{2(1+\phi)}
\Big\langle
(\mathcal{H}^\star)^{-1/2}\widetilde g_k,\
S_{\nu}\,(\mathcal{H}^\star)^{-1/2}\widetilde g_k
\Big\rangle_{x^\star} \nonumber\\
&\ge
\frac{\lambda_{\min}(S_{\nu})}{2(1+\phi)}
\big\langle
\widetilde g_k,\,
(\mathcal{H}^\star)^{-1}[\widetilde g_k]
\big\rangle_{x^\star}.
\label{eq:expected-descent-pulledback}
\end{align}
Define
$
\Lambda_k
:=
\big\langle
\widetilde g_k,\,
\widetilde{\mathcal H}_k^{-1}[\widetilde g_k]
\big\rangle_{x^\star}^{\frac{1}{2}}.
$
Since $\mathcal T_{x_k\to x^\star}$ is an isometry, it holds that
\[
\Lambda_k^2
=
\big\langle
\grad f(x_k),\,
[\Hess f(x_k)]^{-1}[\grad f(x_k)]
\big\rangle_{x_k}.
\]
By \cref{prop:self-concordant}.2, after shrinking $V$ if necessary, we may assume
$
M_f\Lambda_k\le \frac{\gamma}{1+\gamma}\quad \text{for all } x_k\in V.
$
Hence it follows that
\[
f(x_k)-f(x^\star)
\le
\frac{1}{M_f^2}\omega_*(M_f\Lambda_k)
\le
\frac{1+\gamma}{2}\Lambda_k^2
=
\frac{1+\gamma}{2}
\big\langle
\widetilde g_k,\,
\widetilde{\mathcal H}_k^{-1}[\widetilde g_k]
\big\rangle_{x^\star}.
\]
Using~\eqref{eq:Hessian-comparison-pulledback}, we have
\[
\widetilde{\mathcal H}_k^{-1}\preceq (1+\phi)(\mathcal{H}^\star)^{-1},
\]
and therefore
\begin{equation}
f(x_k)-f(x^\star)
\le
\frac{(1+\gamma)(1+\phi)}{2}
\big\langle
\widetilde g_k,\,
(\mathcal{H}^\star)^{-1}[\widetilde g_k]
\big\rangle_{x^\star}.
\label{eq:gap-upper-bound-pulledback}
\end{equation}
Combining~\eqref{eq:expected-descent-pulledback},~\eqref{eq:gap-upper-bound-pulledback}, and~\eqref{eq:Smu-close-to-zeta}, we get
\[
\mathbb E\!\left[f(x_k)-f(x_{k+1})\,\middle|\,x_k\right]
\ge
\frac{(1-\tau)\zeta}{(1+\phi)^2(1+\gamma)}\,\bigl(f(x_k)-f(x^\star)\bigr).
\]
Equivalently,
\[
\mathbb E\!\left[f(x_{k+1})-f(x^\star)\,\middle|\,x_k\right]
\le
\Bigl(1-\frac{(1-\tau)\zeta}{(1+\phi)^2(1+\gamma)}\Bigr)\,\bigl(f(x_k)-f(x^\star)\bigr).
\]
By~\eqref{eq:local-parameter-choice},
\[
\frac{(1-\tau)\zeta}{(1+\phi)^2(1+\gamma)}\ge (1-\delta)\zeta,
\]
and thus
\[
\mathbb E\!\left[f(x_{k+1})-f(x^\star)\,\middle|\,x_k\right]
\le
\bigl(1-(1-\delta)\zeta\bigr)\,\bigl(f(x_k)-f(x^\star)\bigr).
\]
Finally, by shrinking $V$ if necessary, the one-step decrease implies that the iterates remain in $V$, and the stated linear rate follows by iterating the inequality.
\end{proof}

While the previous theorem establishes linear convergence, the rate $\zeta$ depends nontrivially on the Hessian spectrum and sketch quality. \Cref{thm:explicit-rate-nystrom} provides explicit bounds using Nystr\"om approximation theory, yielding concrete guarantees in terms of eigenvalue decay and sketch size.

\paragraph{Proof of \Cref{thm:explicit-rate-nystrom}}

\begin{proof}
    Following the proof of \cref{thm:riem-local-exp}, let
    \begin{equation}
    \label{eq:zeta-definition}
    \zeta\ :=\ \lambda_{\min}\!\Big((\mathcal{H}^\star)^{\frac{1}{2}}\,\mathbb E\big[\mathcal{P}_{x^\star, B^\star, \Xi^\star}\big(\mathcal{P}_{x^\star, B^\star, \Xi^\star}^{*}\mathcal{H}^\star \mathcal{P}_{x^\star, B^\star, \Xi^\star}\big)^{-1}\mathcal{P}_{x^\star, B^\star, \Xi^\star}^{*}\big]\,(\mathcal{H}^\star)^{\frac{1}{2}}\Big).
    \end{equation}
    For any $\nu>0$, define
    \[
    S_{\nu}\ :=\ (\mathcal{H}^\star)^{\frac{1}{2}}\,\mathbb E\Big[\mathcal{P}_{x^\star, B^\star, \Xi^\star}\big(\mathcal{P}_{x^\star, B^\star, \Xi^\star}^{*}\mathcal{H}^\star \mathcal{P}_{x^\star, B^\star, \Xi^\star}+\nu \,\mathrm{Id}_{B^\star}\big)^{-1}\mathcal{P}_{x^\star, B^\star, \Xi^\star}^{*}\Big]\,(\mathcal{H}^\star)^{\frac{1}{2}}.
    \]
    Since $A\mapsto A^{-1}$ is operator monotone decreasing on positive definite operators, for every $\nu>0$, it holds that
    \[
    \big(\mathcal{P}_{x^\star, B^\star, \Xi^\star}^{*}\mathcal{H}^\star \mathcal{P}_{x^\star, B^\star, \Xi^\star}+\nu\,\mathrm{Id}_{B^\star}\big)^{-1}
    \ \preceq\ 
    \big(\mathcal{P}_{x^\star, B^\star, \Xi^\star}^{*}\mathcal{H}^\star \mathcal{P}_{x^\star, B^\star, \Xi^\star}\big)^{-1}.
    \]
    Therefore $S_{\nu}\preceq (\mathcal{H}^\star)^{\frac{1}{2}}\,\mathbb E\big[\mathcal{P}_{x^\star, B^\star, \Xi^\star}(\mathcal{P}_{x^\star, B^\star, \Xi^\star}^{*}\mathcal{H}^\star \mathcal{P}_{x^\star, B^\star, \Xi^\star})^{-1}\mathcal{P}_{x^\star, B^\star, \Xi^\star}^{*}\big]\,(\mathcal{H}^\star)^{\frac{1}{2}}$, and hence
    \begin{equation}
    \label{eq:zeta-lower-by-Smu}
    \zeta\ \ge\ \lambda_{\min}(S_{\nu})\quad \text{for all } \nu>0.
    \end{equation}
    
    Write $\mathcal P=\mathcal P_{x^\star,B^\star,\Xi^\star}$ for brevity. For the isometric case $\mathcal R=\sqrt{d}\,\mathrm{Id}_B$, one verifies that $\mathcal P(\mathcal P^{*}\mathcal{H}^\star\mathcal P+\nu\,\mathrm{Id}_{B^\star})^{-1}\mathcal P^{*}=\Pi_{\Xi^\star}\big(\Pi_{\Xi^\star}\mathcal{H}^\star\Pi_{\Xi^\star}+\tfrac{\nu}{d}\,\mathrm{Id}_{\Xi^\star}\big)^{-1}\Pi_{\Xi^\star}$. Applying \cref{prop:manifold-42} with ridge parameter $\nu/d$ in place of $\nu$ and $\ell=2p-1$, together with Jensen's inequality for the convex map $A\mapsto \|A\|_{\mathrm{op}}$, yields
    \begin{align*}
    &\Big\|(\mathcal{H}^\star+\tfrac{\nu}{d}\,\mathrm{Id}_{x^\star})^{-1}
    -\mathbb E\big[\mathcal P(\mathcal P^{*}\mathcal{H}^\star \mathcal P+\nu\,\mathrm{Id}_{B^\star})^{-1}\mathcal P^{*}\big]\Big\|_{\mathrm{op}}\\
    &\le\ \mathbb E\Big\|(\mathcal{H}_{x^\star,B^\star,\Xi^\star}+\tfrac{\nu}{d}\,\mathrm{Id}_{x^\star})^{-1}
    -(\mathcal{H}^\star+\tfrac{\nu}{d}\,\mathrm{Id}_{x^\star})^{-1}\Big\|_{\mathrm{op}}
    \ \le\ C_p\,\frac{d\,\lambda_p(\mathcal{H}^\star)}{\nu\,(\lambda_d(\mathcal{H}^\star)+\nu/d)}.
    \end{align*}
    For general $\mathcal R$ the same argument applies with $\nu$ replaced by $\nu/\|\mathcal R\|_{\mathrm{op}}^{2}$ in the ridge parameter. Since the constants in the final bound depend on $\mathcal R$ only through $\|\mathcal R\|_{\mathrm{op}}$, we absorb these factors into $C_p$ for the remainder of the proof.
    Let $\varepsilon_{\nu}:=C_p\,\frac{d\,\lambda_p(\mathcal{H}^\star)}{\nu\,(\lambda_d(\mathcal{H}^\star)+\nu/d)}$. Since both operators are self-adjoint, the bound $\|X-Y\|_{\mathrm{op}}\le \varepsilon_{\nu}$ implies $Y\succeq X-\varepsilon_{\nu}\,\mathrm{Id}_{x^\star}$, and thus
    \[
    \mathbb E\big[\mathcal{P}_{x^\star, B^\star, \Xi^\star}(\mathcal{P}_{x^\star, B^\star, \Xi^\star}^{*}\mathcal{H}^\star \mathcal{P}_{x^\star, B^\star, \Xi^\star}+\nu\,\mathrm{Id}_{B^\star})^{-1}\mathcal{P}_{x^\star, B^\star, \Xi^\star}^{*}\big]
    \ \succeq\ (\mathcal{H}^\star+\tfrac{\nu}{d}\,\mathrm{Id}_{x^\star})^{-1}-\varepsilon_{\nu}\,\mathrm{Id}_{x^\star}.
    \]
    Multiplying both sides by $(\mathcal{H}^\star)^{\frac{1}{2}}$ yields
    \[
    S_{\nu}\ \succeq\ (\mathcal{H}^\star)^{\frac{1}{2}}(\mathcal{H}^\star+\tfrac{\nu}{d}\,\mathrm{Id}_{x^\star})^{-1}(\mathcal{H}^\star)^{\frac{1}{2}}-\varepsilon_{\nu}\,\mathcal{H}^\star
    \ =\ \mathcal{H}^\star(\mathcal{H}^\star+\tfrac{\nu}{d}\,\mathrm{Id}_{x^\star})^{-1}-\varepsilon_{\nu}\,\mathcal{H}^\star.
    \]
    Taking the minimum eigenvalue and using that the eigenvalues of $\mathcal{H}^\star(\mathcal{H}^\star+\tfrac{\nu}{d}\,\mathrm{Id}_{x^\star})^{-1}$ are $\lambda_j(\mathcal{H}^\star)/(\lambda_j(\mathcal{H}^\star)+\nu/d)$, we obtain
    \begin{equation}
    \label{eq:lminSmu}
    \lambda_{\min}(S_{\nu})\ \ge\ \frac{\lambda_d(\mathcal{H}^\star)}{\lambda_d(\mathcal{H}^\star)+\nu/d}-\varepsilon_{\nu}\,\lambda_1(\mathcal{H}^\star).
    \end{equation}
    Let $\nu=\theta\,\lambda_p(\mathcal{H}^\star)$ with $\theta>0$. Then~\eqref{eq:lminSmu} becomes
    \[
    \lambda_{\min}(S_{\nu})\ \ge\ \frac{\lambda_d(\mathcal{H}^\star)-\frac{d\,\lambda_1(\mathcal{H}^\star)}{\theta}\,C_p}{\lambda_d(\mathcal{H}^\star)+\theta\,\lambda_p(\mathcal{H}^\star)/d}.
    \]
    Choose $\theta:=2d\kappa C_p$ with $\kappa=\lambda_1(\mathcal{H}^\star)/\lambda_d(\mathcal{H}^\star)$. The numerator is then $\lambda_d(\mathcal{H}^\star)/2$ and the denominator is $\lambda_d(\mathcal{H}^\star)+2\kappa C_p\,\lambda_p(\mathcal{H}^\star)$, whence
    \[
    \lambda_{\min}(S_{\nu})\ \ge\ \frac{\lambda_d(\mathcal{H}^\star)/2}{\lambda_d(\mathcal{H}^\star)+2\kappa C_p\,\lambda_p(\mathcal{H}^\star)}
    \ =\ \frac{1}{2\bigl(1+2\kappa C_p\,\lambda_p(\mathcal{H}^\star)/\lambda_d(\mathcal{H}^\star)\bigr)}\ =:\ \overline{\zeta}.
    \]
    Combining this with~\eqref{eq:zeta-lower-by-Smu} yields $\zeta\ge \overline{\zeta}$.
    
    Finally, substitute $\zeta\ge \overline{\zeta}$ into the one-step contraction of \cref{thm:riem-local-exp}. For any $\delta\in(0,1)$ and all iterates in a sufficiently small neighborhood of $x^\star$,
    \[
    \mathbb E\big[f(x_{k+1})-f(x^\star)\mid x_k\big]\ \le\ \bigl(1-(1-\delta)\overline{\zeta}\bigr)\bigl(f(x_k)-f(x^\star)\bigr).
    \]
    Iterating the inequality gives the claimed linear rate and the complexity bound.
    \end{proof}

\bibliographystyle{siamplain}
\bibliography{ref}

\end{document}